\numberwithin{equation}{section}
\newtheorem{thm}{Theorem}[section]
\newtheorem{prop}[thm]{Proposition}
\newtheorem{lem}[thm]{Lemma}
\newtheorem{cor}[thm]{Corollary}
\theoremstyle{definition} 
\newtheorem{eg}[thm]{Example}
\theoremstyle{remark}
\newtheorem{rem}[thm]{Remark}
\newcommand{\beq}{\begin{equation}}
\newcommand{\eeq}{\end{equation}}
\newcommand{\be}{\begin{equation*}}
\newcommand{\ee}{\end{equation*}}
\newcommand{\bs}{\boldsymbol}
\newcommand{\C}{\mathbb{C}}
\newcommand{\bK}{\mathbb{K}}
\newcommand{\Z}{\mathbb{Z}}
\newcommand{\bP}{\mathbb{P}}
\newcommand{\mc}{\mathcal}
\newcommand{\cD}{\mathcal{D}}
\newcommand{\cR}{\mathcal{R}}
\newcommand{\cT}{\mathcal{T}}
\newcommand{\gl}{\mathfrak{gl}}
\newcommand{\h}{\mathfrak{h}}
\newcommand{\fkS}{\mathfrak{S}}
\newcommand{\Wr}{\mathrm{Wr}}
\newcommand{\rY}{\mathrm{Y}}
\newcommand{\End}{\mathrm{End}}
\newcommand{\ord}{\mathrm{ord}}
\newcommand{\id}{{\mathrm{id}}}   
\newcommand{\sF}{\mathscr{F}}
\newcommand{\pa}{\partial}
\newcommand{\tl}{\tilde}
\newcommand{\gge}{\geqslant}
\newcommand{\lle}{\leqslant}
\newcommand{\la}{\lambda}
\newcommand{\bla}{\bm\lambda}
\newcommand{\glMN}{\mathfrak{gl}_{m|n}}
\newcommand{\UglMN}{\mathrm{U}(\mathfrak{gl}_{m|n})}
\newcommand{\YglMN}{\mathrm{Y}(\mathfrak{gl}_{m|n})}
\newcommand{\bmx}{\begin{pmatrix}}    
\newcommand{\emx}{\end{pmatrix}}   
\newcommand{\wt}{\widetilde}    
\newcommand{\ka}{\kappa}
\newcommand{\bka}{\bm{\kappa}}
\newcommand{\qedd}{\tag*{$\square$}}
\begin{document}
\pagestyle{myheadings}
\setcounter{page}{1}

\title[Bethe ansatz equation and rational difference operators]{Solutions of $\glMN$ XXX Bethe ansatz equation\\ and rational difference operators}

\author{Chenliang Huang, Kang Lu, and Evgeny Mukhin}
\address{C.H.: Department of Mathematical Sciences,
Indiana University-Purdue University\newline
\strut\kern\parindent Indianapolis, 402 N.Blackford St., LD 270,
Indianapolis, IN 46202, USA}\email{ch30@iupui.edu}
\address{K.L.: Department of Mathematical Sciences,
Indiana University-Purdue University\newline
\strut\kern\parindent Indianapolis, 402 N.Blackford St., LD 270,
Indianapolis, IN 46202, USA}\email{lukang@iupui.edu}
\address{E.M.: Department of Mathematical Sciences,
Indiana University-Purdue University\newline
\strut\kern\parindent Indianapolis, 402 N.Blackford St., LD 270,
Indianapolis, IN 46202, USA}\email{emukhin@iupui.edu}

\begin{abstract} We study solutions of the Bethe ansatz equations of the non-homogeneous periodic XXX model associated to super Yangian $\YglMN$.
To a solution we associate a rational difference operator $\cD$ and a superspace of rational functions $W$. We show that the set of complete factorizations of $\cD$ is in canonical bijection with the variety of superflags in $W$ and that each generic superflag defines a solution of the Bethe ansatz equation. We also give the analogous statements for the quasi-periodic supersymmetric spin chains.

\medskip

\noindent
{\bf Keywords:} supersymmetric spin chains, Bethe ansatz, difference operators.
\end{abstract}

\maketitle
\thispagestyle{empty}
\section{Introduction}	
The supersymmetric spin chains were introduced back to \cite{K} in 1980s. 
There is a considerable renewed interest to those models, see \cite{BR,BR2,KSZ,PRS,TZZ}. However,  many results available for the even spin chains are still unknown for the supersymmetric case. In this paper we are able to fill up a few gaps.

We use the method of populations of solutions of the Bethe ansatz equations. It was pioneered in \cite{MV1} in the case of the Gaudin model and then extended to the XXX models constructed from the Yangian associated to $\gl_n$, see \cite{MV03,MV2,MTV2}. We are helped by the recent work on the populations of the supersymmetric Gaudin model \cite{HMVY18}.

\medskip

Let us describe our findings in more detail. In this paper we restrict ourselves to tensor products of evaluation polynomial $\gl_{m|n}$-modules. Moreover, we assume that the evaluation parameters are generic, meaning they are distinct modulo $h\Z$ where $h$ is the shift in the super Yangian relations. Note that such tensor products are irreducible $\YglMN$-modules. We also assume that at least one of the participating $\gl_{m|n}$-modules is typical.

The crucial observation is the reproduction procedure which given a solution of the Bethe ansatz equation and a simple root of $\glMN$, produces another solution, see Theorem \ref{thm repro pro glmn}. 

The reproduction procedure along an even root is given in \cite{MV03}. An even component of a solution of the Bethe ansatz equation gives a polynomial solution of a second order difference equation. The reproduction procedure amounts to trading this solution to any other polynomial solution of the difference equation, see \eqref{bosonic rp}. We call it the \emph{bosonic reproduction procedure}. 

The reproduction procedure along an odd root is different. In fact, an odd component of a solution of the Bethe ansatz equation corresponds to a polynomial which divides some other polynomial, see \eqref{fermionic rp}. The reproduction procedure changes the divisor to the quotient polynomial with an appropriate shift. We call it the \emph{fermionic reproduction procedure}. The fermionic reproduction procedure looks similar to a mutation in a cluster algebra.

Then the population is the set of all solutions obtained from one solution by recursive application of the reproduction procedure. 

\medskip

Given a solution of the Bethe ansatz equation, we define a rational difference operator of the form $\cD=\cD_{\bar 0} \cD_{\bar 1}^{-1}$, where $\cD_{\bar 0},$ $\cD_{\bar 1}$ are linear difference operators of orders $m$ and $n$ with rational coefficients, respectively, see \eqref{eq rational diff oper glmn}. The operator $\cD$ is invariant under reproduction procedures and therefore it is defined for the population, see Theorem \ref{thm diffoper inv}. The idea of considering such an operator is found in \cite{HMVY18} in the case of the Gaudin model. Such an operator in the case of tensor products of vector representations also appears in \cite{T} in relation to the study of T-systems and analytic Bethe ansatz.

Kernels $V=\ker \cD_{\bar 0}$,  $U=\ker \cD_{\bar 1}$ are spaces of rational functions of dimensions $m$ and $n$. Under our assumption, that at least one of the representations is typical, we can show  $V\cap U=0$, see Lemma \ref{lem empty cap}. We consider superspace $W=V\oplus U$. Then we show that there are natural bijections between three objects: elements of the population of the solutions of the Bethe ansatz equation, superflags in $W$, and complete factorizations of $\cD$ into products of linear difference operators and their inverses, see Theorem \ref{thm bijection 3 objs}.

Note that the Bethe ansatz equations depend on the choice of the Borel subalgebra in $\gl_{m|n}$. The fermionic reproductions change this choice. In general, the Borel subalgebra is determined from the parity of the superflag or, equivalently, from the positions of the inverse linear difference operators in a complete factorization of $\cD$.

\medskip

Thus the solutions of the Bethe ansatz equations correspond to superspaces of rational functions. It is natural to expect that all joint eigenvectors of XXX Hamiltonians correspond to such spaces and that there is a natural correspondence between the eigenvectors of the transfer matrix and points of an appropriate Grassmannian. However, the precise formulation of this correspondence is not established even in the even case, see \cite{MTV2}. 

\medskip

We give a few details in the quasi-periodic case as well, see Section \ref{sec quasi}. In this case we also have concepts of reproduction procedure, 
the population, and the rational difference operator. Then the elements in the population are in a natural bijection with the permutations of the distinguished flags in the space of functions of the form $f(x)=e^{zx}r(x)$, where $r(x)\in\C(x)$ is a rational function and $z\in\C$, see Theorem \ref{thm bijection weyl}. A similar picture in the even case is described in \cite{MV2}.

\medskip

The paper is constructed as follows. In Section \ref{sec rational diff oper study} we study rational difference operators and their complete factorizations. We then recall the XXX model associated to $\gl_{m|n}$ and the corresponding Bethe ansatz equations in Section \ref{sec XXX}. In Section \ref{sec rep pro gl2 gl11} we recall the reproduction procedure for $\gl_2$ and define its analog for $\gl_{1|1}$. In Section \ref{sec glmn rep pro} we define reproduction procedures for $\gl_{m|n}$, a population, and a rational difference operator associated to a population. In Section \ref{sec pop flag variety} we give the bijections between the superflag variety, the set of complete factorizations, and a population. We conclude our paper by generalizing our results to the quasi-periodic XXX model in Section \ref{sec quasi}. Appendix \ref{sec gl11 eg conj} is devoted to the basics of Bethe ansatz in the case of $\mathrm{Y}(\gl_{1|1})$.

\medskip 

{\bf Acknowledgments.} This work was partially supported by a grant from the Simons Foundation \#353831.

\section{Rational difference operators and their factorizations}
\label{sec rational diff oper study}
We study properties of ratios of difference operators, following the treatment of ratios of differential operators in \cite{CDSK}. We also describe the relation between the complete factorizations and the superflag varieties.

\subsection{Parity sequences}
We use the notation of \cite[Section 2]{HMVY18}. We recall some of them.

Denote by $S_{m|n}$ the set of all sequences $\bm s=(s_{1},s_2,\dots,s_{m+n})$ where $s_i\in\{\pm1\}$ and $1$ occurs exactly $m$ times. Elements of $S_{m|n}$ are called \emph{parity sequences}. The parity sequence of the form $\bm s_0=(1,\dots,1,-1,\dots,-1)$ is the \emph{standard parity sequence}. 

A parity sequence $\bm s$ corresponds to a permutation $\sigma_{\bm s}$ of the permutation group $\fkS_{m+n}$ of $m+n$ elements as follows:
\[
\sigma_{\bm s}(i)=\begin{cases}
\#\{j~|~j\lle i,~s_j=1\},&\text{ if } s_i=1,\\
m+\#\{j~|~j\lle i,~s_j=-1\},&\text{ if } s_i=-1.
\end{cases}
\]
For a parity sequence $\bm s\in S_{m|n}$, we define
\[
\bm s_i^+=\#\{j~|~j>i,~s_j=1\},\quad \bm s_i^-=\#\{j~|~j<i,~s_j=-1\},\quad i=1,\dots,m+n.
\]
The permutation $\sigma_{\bm s}$ is related to $\bm s_i^{\pm}$ by
\be
\bs s_i^+=\begin{cases} m-\sigma_{\bs s}(i), & \mbox{if } s_i=1,\\  \sigma_{\bs s}(i)-i,  & \mbox{if } s_i=-1, \end{cases}
\qquad 
\bs s_i^-=\begin{cases}i-\sigma_{\bs s}(i), & \mbox{if } s_i=1,\\  \sigma_{\bs s}(i)-m-1,  & \mbox{if } s_i=-1. \end{cases}
\ee
If the parity sequence is dropped from the notation, it means we consider the standard parity sequence.

\subsection{Rational difference operators}
Fix a non-zero number $h\in \C^\times$. Let $\bK$ be the field of complex valued rational functions $\bK=\C(x)$, with an automorphism $\tau:\bK\to \bK$, $(\tau f)(x)\mapsto f(x-h)$. 

Consider the algebra $\bK[\tau]$ of {\it difference operators} where the shift operator $\tau$ 
satisfies $$\tau \cdot f=f(x-h)\cdot \tau$$ for all $f\in\bK$. By definition, an element $\cD\in\bK[\tau]$ has the form 
\beq\label{difference oper}
\cD=\sum_{j=0}^{r}a_{j}\tau^{j},\quad a_j\in\bK,\quad r\in\Z_{\gge 0}.
\eeq
The difference operator $\cD$ has {\it order} $r$, $\ord~\cD=r$, if $a_r\neq0$. One says that $\cD$ is {\it monic} if $a_r=1$. We call $a_0$ the \emph{constant term} of $\cD$.

Let $\cD\in\bK[\tau]$ be a difference operator of order $r$ as in \eqref{difference oper}. We say a difference operator $\cD$ of order $r$ is \emph{completely factorable over $\bK$} if there exist $f_i\in\bK$, $i=1,\dots,r$, such that $\cD=a_r\,d_1\dots d_r$, where $d_i=\tau-f_i$. We focus on completely factorable difference operators with non-zero constant terms $a_0$. In this case, we consider factorizations of the form $\cD=a_0d_1\cdots d_r$, where $d_i=1-\tl f_i\tau$, $\tl f_i\in\bK$, $i=1,\dots,r$. 

Let $\ker \cD=\{u\in\bK\;|\;\cD u=0\}$ be the kernel of $\cD$. It is clear that if $\dim\left(\ker \cD\right)=\ord\,\cD$, then $\cD$ is completely factorable over $\bK$.

\medskip

Let $\bK(\tau)$ be the division ring generated by $\bK[\tau]$. The division ring $\bK(\tau)$ is called the \emph{ring of rational difference operators}. Elements in $\bK(\tau)$ are called \emph{rational difference operators}.

A \emph{fractional factorization} of a rational difference operator $\cR$ is the equality $\cR=\cD_{\bar 0}\cD_{\bar 1}^{-1}$, where $\cD_{\bar 0},\cD_{\bar 1}\in\bK[\tau]$. A fractional factorization $\cR=\cD_{\bar 0}\cD_{\bar 1}^{-1}$ is called \emph{minimal} if $\cD_{\bar 1}$ is monic and has the minimal possible order. 

\begin{prop}\label{prop rdp}
Any  rational difference operator $\cR\in\bK(\tau)$ has the following properties.
\begin{enumerate} 
\item There exists a unique minimal fractional factorization of $\cR$.
\item Let $\cR=\cD_{\bar 0}\cD_{\bar 1}^{-1}$ be the minimal fractional factorization. If $\cR=\wt{\cD}_{\bar 0}\wt{\cD}_{\bar 1}^{-1}$ is a fractional factorization, then there exists $\cD\in\bK[\tau]$ such that $\widetilde{\cD}_{\bar 0}=\cD_{\bar 0}\cD$ and $\widetilde{\cD}_{\bar 1}=\cD_{\bar 1}\cD$.
\item Let $\cR=\cD_{\bar 0}\cD_{\bar 1}^{-1}$ be a fractional factorization such that $\dim\left(\ker \cD_{\bar 0}\right)=\ord~ \cD_{\bar 0} $ and $\dim\left(\ker \cD_{\bar 1} \right)=\ord~ \cD_{\bar 1} $. Then $\cR=\cD_{\bar 0}\cD_{\bar 1}^{-1}$ is the minimal fractional factorization of $\cR$ if and only if $\ker \cD_{\bar 0}\cap\ker \cD_{\bar 1}=0$.
\end{enumerate}
\end{prop}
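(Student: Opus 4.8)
\emph{Proof plan.} The whole proposition rests on the fact that $\bK[\tau]$ is a two-sided Euclidean domain: since the leading coefficient of any nonzero operator is an invertible element of the field $\bK$, one has both right division ($A=QB+R$ with $\ord R<\ord B$) and left division ($A=BQ'+R'$ with $\ord R'<\ord B$). Consequently $\bK[\tau]$ is a principal left and right ideal domain, satisfies the Ore conditions on both sides, and its skew field of fractions is $\bK(\tau)$, every element of which is a right fraction $\cD_{\bar 0}\cD_{\bar 1}^{-1}$ with $\cD_{\bar 0},\cD_{\bar 1}\in\bK[\tau]$. I would set this up by transcribing the treatment of ratios of differential operators in \cite{CDSK} to the difference setting.

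For \textit{(1)}, among all fractional factorizations $\cR=AB^{-1}$ I take one with $\ord B$ as small as possible and rescale $B$ to be monic; this gives existence, and I postpone uniqueness. For \textit{(2)}, I will in fact prove the statement for \emph{any} fractional factorization $\cR=\cD_{\bar 0}\cD_{\bar 1}^{-1}$ with $\ord\cD_{\bar 1}$ minimal (uniqueness in (1), and hence the statement of (2) as given, then follows). Put $\cD:=\cD_{\bar 1}^{-1}\wt{\cD}_{\bar 1}\in\bK(\tau)$; from $\cD_{\bar 0}\cD_{\bar 1}^{-1}=\wt{\cD}_{\bar 0}\wt{\cD}_{\bar 1}^{-1}$ one also reads off $\cD=\cD_{\bar 0}^{-1}\wt{\cD}_{\bar 0}$, whence $\wt{\cD}_{\bar 0}=\cD_{\bar 0}\cD$ and $\wt{\cD}_{\bar 1}=\cD_{\bar 1}\cD$, so it only remains to show $\cD\in\bK[\tau]$. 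Left-dividing, $\wt{\cD}_{\bar 1}=\cD_{\bar 1}Q+S$ with $\ord S<\ord\cD_{\bar 1}$, hence $\cD=Q+\cD_{\bar 1}^{-1}S$; multiplying on the left by $\cD_{\bar 0}$ and using $\cD_{\bar 0}\cD=\wt{\cD}_{\bar 0}$ gives $\cR S=\wt{\cD}_{\bar 0}-\cD_{\bar 0}Q\in\bK[\tau]$. If $S\neq0$, then $\cR=(\cR S)S^{-1}$ is a fractional factorization with denominator of order $\ord S<\ord\cD_{\bar 1}$, contradicting minimality; thus $S=0$ and $\cD=Q\in\bK[\tau]$. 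Uniqueness in (1) follows, since two minimal factorizations differ by such a $\cD$, which must have order $0$; writing $\cD=c\in\bK^\times$ and comparing leading coefficients via $\tau^{\nu}c=c(x-\nu h)\tau^{\nu}$ (both denominators monic of order $\nu$) forces $c=1$.

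For \textit{(3)} I argue the two implications separately; we may assume $\cD_{\bar 1}$ is monic, which is automatic when $\cD_{\bar 0}\cD_{\bar 1}^{-1}$ is the minimal factorization and can be arranged by rescaling in general without affecting the kernel dimensions or the triviality of the intersection. If some $0\neq u\in\ker\cD_{\bar 0}\cap\ker\cD_{\bar 1}$, let $d=\tau-f\in\bK[\tau]$ with $f(x)=u(x-h)/u(x)$, the unique monic order-one operator with $du=0$. Right-dividing, $\cD_{\bar 0}=Q_0d+r_0$ with $r_0\in\bK$, and applying this to $u$ gives $r_0u=0$, so $r_0=0$; thus $d$ right-divides $\cD_{\bar 0}$, and likewise $\cD_{\bar 1}=Q_1d$ with $\ord Q_1=\ord\cD_{\bar 1}-1$. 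Then $\cR=Q_0d\,d^{-1}Q_1^{-1}=Q_0Q_1^{-1}$ exhibits a fractional factorization of $\cR$ with strictly smaller denominator order, so $\cD_{\bar 0}\cD_{\bar 1}^{-1}$ is not minimal. Conversely, suppose $\ker\cD_{\bar 0}\cap\ker\cD_{\bar 1}=0$, and let $\cR=\cF_{\bar 0}\cF_{\bar 1}^{-1}$ be the minimal factorization. By (2), $\cD_{\bar 0}=\cF_{\bar 0}\cD$ and $\cD_{\bar 1}=\cF_{\bar 1}\cD$ for some $\cD\in\bK[\tau]$, and since $\cD v=0$ implies $\cD_{\bar 0}v=\cD_{\bar 1}v=0$ we get $\ker\cD\subseteq\ker\cD_{\bar 0}\cap\ker\cD_{\bar 1}=0$. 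The linear map $u\mapsto\cD u$ sends $\ker\cD_{\bar 0}$ into $\ker\cF_{\bar 0}$ with kernel $\ker\cD=0$, hence injectively, so
\[
\ord\cD_{\bar 0}=\dim\ker\cD_{\bar 0}\lle\dim\ker\cF_{\bar 0}\lle\ord\cF_{\bar 0}=\ord\cD_{\bar 0}-\ord\cD,
\]
which forces $\ord\cD=0$; as $\cD_{\bar 1}=\cF_{\bar 1}\cD$ with both monic, $\cD=1$, so $\cD_{\bar 0}\cD_{\bar 1}^{-1}=\cF_{\bar 0}\cF_{\bar 1}^{-1}$ is the minimal factorization.

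The main obstacle is organizational rather than conceptual: one must keep left and right division straight, since they enter in opposite ways in (2) and in the first half of (3). The one genuinely non-formal point is the backward implication of (3): because a difference operator over $\C(x)$ may have kernel strictly smaller than its order, one cannot deduce $\ord\cD=0$ by exhibiting a common root of $\cD_{\bar 0}$ and $\cD_{\bar 1}$, and the hypothesis $\dim(\ker\cD_{\bar 0})=\ord\cD_{\bar 0}$ must instead be fed in through the displayed inequality chain. Everything else is routine.
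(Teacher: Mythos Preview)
Your argument is correct and is essentially the approach the paper has in mind: the paper's proof simply notes that $\bK[\tau]$ is (right) Euclidean, hence Ore with principal one-sided ideals, and then cites the analogous differential-operator results \cite[Proposition 2.1, Corollary 2.2, Lemma 3.2, Proposition 3.4]{CDSK}. You have written out in full the adaptations that the paper leaves implicit, and your handling of part (iii)---using the hypothesis $\dim\ker\cD_{\bar 0}=\ord\cD_{\bar 0}$ via the injectivity of $u\mapsto\cD u$ rather than trying to produce a common kernel element---is exactly the care that the difference setting requires.
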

\begin{proof}
We have the analogs of \cite[Proposition 2.1, Corollary 2.2, Lemma 3.2]{CDSK} for difference operators. Namely, the algebra $\bK[\tau]$ is right Euclidean, therefore $\bK[\tau]$ satisfies the right Ore condition and every right ideal of $\bK[\tau]$ is principal. This statement is proved similarly as \cite[Proposition 3.4]{CDSK}.
\end{proof}

We call $\cR$ an \emph{$(m|n)$-rational difference operator} if in the minimal fractional factorization 
$\cR=\cD_{\bar 0}\cD_{\bar 1}^{-1}$, $\cD_{\bar 0},\cD_{\bar 1}$ are completely factorable over $\bK$, and $\ord(\cD_{\bar 0})=m$, $\ord(\cD_{\bar 1})=n$, and $\cD_{\bar 0},\cD_{\bar 1}$ have the same non-zero constant term.

Let $\cR$ be an $(m|n)$-rational difference operator. Note that $\cR$ can also be written in the form $\cR=\wt {\cD}_{\bar 1}^{-1} \wt{\cD}_{\bar 0}$, where 
$\wt {\cD}_{\bar 1},\wt {\cD}_{\bar 0}\in\bK[\tau]$, $\ord(\wt{\cD}_{\bar 0})=m$, and $\ord(\wt{\cD}_{\bar 1})=n$. More generally, let $\bs s\in S_{m|n}$ be a parity sequence. Then we call the form $\cR=d_1^{s_1}\dots d_{m+n}^{s_{m+n}}$, where $d_i=1-f_i\tau$, $f_i\in\bK$, $i=1,\dots,m+n$, a \emph{complete factorization with the parity sequence $\bs s$}. Let $\mathfrak F^{\bs s}(\cR)$ be the set of all complete factorizations of $\cR$ with parity sequence $\bs s$ and $\mathfrak F(\cR)=\bigsqcup_{\bm s \in S_{m|n}}\mathfrak F^{\bs s}(\cR) $ the set of all complete factorizations of $\cR$.

Throughout the paper, we use the following useful notation: for any $i\in\Z$ and $f\in\bK$,
\[
f[i]:=\tau^i (f)=f(x-ih).
\]
Define the \emph{discrete logarithmic derivative} of a function $f(x)$ by $\ln'(f)=f/f[1]$. 

Consider two $(1|1)$-rational difference operators
\[
\cR_1=(1-a\,\tau)(1-b\,\tau)^{-1}\quad\text{and}\quad \cR_2=(1-c\,\tau)^{-1}(1-d\,\tau),
\]
where $a,b,c,d\in \bK$, $a\ne b$, and $c\ne d$. 

\begin{lem}\label{eq relations diff}
We have $\cR_1=\cR_2$ if and only if
\be
\begin{cases}
	c=b[1]\ln'(a-b),\\
	d=a[1]\ln'(a-b),\\
\end{cases}
\quad\text{or equivalently}\qquad
\begin{cases}
	a[1]=d/\ln'(c-d),\\
	b[1]=c/\ln'(c-d).\\
\end{cases}\qedd
\ee	
\end{lem}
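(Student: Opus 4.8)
The plan is to clear denominators, reducing the identity $\cR_1=\cR_2$ of rational difference operators to an identity between honest difference operators in $\bK[\tau]$, then extract the resulting scalar equations and solve them.

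First I would note that $1-b\tau$ and $1-c\tau$ are nonzero elements of the division ring $\bK(\tau)$, hence invertible there. Multiplying $\cR_1=\cR_2$ on the left by $1-c\tau$ and on the right by $1-b\tau$, one sees that $\cR_1=\cR_2$ is equivalent to
\[
(1-c\tau)(1-a\tau)=(1-d\tau)(1-b\tau)\qquad\text{in }\bK[\tau].
\]
Using the commutation rule $\tau f=f[1]\tau$, the left side expands to $1-(a+c)\tau+c\,a[1]\,\tau^{2}$ and the right side to $1-(b+d)\tau+d\,b[1]\,\tau^{2}$. Comparing coefficients of $\tau$ and of $\tau^{2}$ shows that $\cR_1=\cR_2$ is equivalent to the system $a+c=b+d$ and $c\,a[1]=d\,b[1]$.

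Next I would solve this system for the pair $(c,d)$. Viewed as a linear system in $c,d$ its determinant is $a[1]-b[1]=(a-b)[1]$, which is nonzero because $a\neq b$ and $\tau$ is injective on $\bK$, so the solution is unique. From the first equation $d=c+(a-b)$, and substituting into the second gives $c\,(a-b)[1]=(a-b)\,b[1]$, hence $c=b[1]\,(a-b)/(a-b)[1]=b[1]\ln'(a-b)$; then $d=c+(a-b)=a[1]\ln'(a-b)$ after a one-line simplification using $(a-b)[1]+b[1]=a[1]$. This gives the first pair of formulas. For the equivalence with the second pair I would observe that these formulas yield $c-d=(b[1]-a[1])\ln'(a-b)=-(a-b)=b-a$, so $\ln'(c-d)=\ln'(b-a)=\ln'(a-b)$, and dividing the expressions for $c$ and $d$ by $\ln'(c-d)$ gives $b[1]=c/\ln'(c-d)$ and $a[1]=d/\ln'(c-d)$. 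Conversely, from the second system one gets $c-d=(b[1]-a[1])\ln'(c-d)=(b[1]-a[1])(c-d)/(c-d)[1]$, and cancelling $c-d\neq 0$ gives $(c-d)[1]=(b-a)[1]$, hence $c-d=b-a$ and $\ln'(c-d)=\ln'(a-b)$, which returns the first system.

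I do not anticipate a genuine obstacle here; the only points needing care are the (immediate) invertibility of $1-b\tau$ and $1-c\tau$ in $\bK(\tau)$ and the careful bookkeeping of the nonvanishing hypotheses — $a\neq b$ is exactly what permits dividing by $(a-b)[1]$, and $c\neq d$ is exactly what permits cancelling $c-d$ in the converse. Note also that the formulas show $a\neq b\iff c\neq d$, so the two normalizing hypotheses are automatically consistent.
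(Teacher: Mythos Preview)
Your proof is correct. The paper states this lemma without proof (the $\square$ at the end of the display signals it is left as a straightforward computation), so there is no argument to compare against; your derivation---clearing denominators to the identity $(1-c\tau)(1-a\tau)=(1-d\tau)(1-b\tau)$ in $\bK[\tau]$, matching coefficients, and then checking the equivalence of the two systems---is exactly the intended direct verification.
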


Let $\cR$ be an $(m|n)$-rational difference operator with a complete factorization $\cR=d_1^{s_1}\cdots d_{m+n}^{s_{m+n}}$, where $d_i=1-f_i\tau$. Suppose $s_i\ne s_{i+1}$ and $d_i\ne d_{i+1}$. Using Lemma \ref{eq relations diff}, one constructs $\tl{d}_i$ and $\tl{d}_{i+1}$ such that $d_i^{s_i}d_{i+1}^{s_{i+1}}=\tl d_{i}^{s_{i+1}}\tl d_{i+1}^{s_i}$. This induces a new complete factorization of $\cR=d_1^{s_1}\cdots \tl d_{i}^{s_{i+1}}\tl d_{i+1}^{s_i}\cdots  d_{m+n}^{s_{m+n}}$ with the new parity sequence $\tl{\bm s}=\bm s^{[i]}=(s_1,\dots,s_{i+1},s_{i},\dots,s_{m+n})$.

Repeating this procedure, we see that there exists a canonical bijection between the sets of complete factorizations with respect to any two parity sequences.

\subsection{Complete factorizations and superflag varieties}
\label{sec 2.3}
Let $W=W_{\bar 0}\oplus W_{\bar 1}$ be a vector superspace with $\dim(W_{\bar 0})=m$ and $\dim(W_{\bar 1})=n$. Consider a \emph{full flag} $\sF$ of $W$, $\sF=\{F_1\subset F_2\subset \dots\subset F_{m+n}=W\}$ such that $\dim(F_i)=i$. A basis $\{w_1,\dots,w_{m+n}\}$ of $W$ \emph{generates the full flag $\sF$} if $F_i$ is spanned by $w_1,\dots,w_i$. A full flag is called a \emph{full superflag} if it is generated by a homogeneous basis. We denote by $\sF(W)$ the set of all full superflags.

To a homogeneous basis $\{w_1,\dots,w_{m+n}\}$ of $W$, we associate the unique parity sequence $\bm s\in S_{m|n}$ such that $s_i=(-1)^{|w_i|}$. We say a full superflag $\sF$ \emph{has parity sequence} $\bm s$ if it is generated by a homogeneous basis whose parity sequence is $\bm s$. We denote by $\sF^{\bm s}(W)$ the set of all full superflags of parity $\bm s$.

Clearly, we have
\[
\sF(W)=	\bigsqcup_{\bm{s}\in S_{m|n}} \sF^{\bm s}(W), \qquad 
\sF^{\bm s}(W)\cong\sF\left(W_{\bar{0}}\right)\times \sF\left(W_{\bar{1}}\right).
\] 

Given a basis $\{v_1,\dots, v_m\}$ of $W_{\bar 0}$, a basis $\{u_1,\dots, u_n\}$ of $W_{\bar 1}$, and a parity sequence $\bm s\in S_{m|n}$, define a homogeneous basis $\{w_1,\dots, w_{m+n}\}$ of $W$ by the rule $w_i=v_{\bm s^+_i+1}$ if $s_i=1$ and 
$w_i=u_{\bm s^-_i+1}$ if $s_i=-1$. Conversely, any homogeneous basis of $W$ gives a basis of $W_{\bar 0}$, a basis of $W_{\bar 1}$, and a parity sequence $\bm s$. We say that the basis $\{w_1,\dots, w_{m+n}\}$ is \emph{associated to $\{v_1,\dots, v_m\}$, $\{u_1,\dots, u_n\}$, and $\bm s$}.

Define the \emph{discrete Wronskian} $\Wr$ (or Casorati determinant) of $g_1,\dots,g_r$ by
$$
\Wr^{\pm}(g_1,\dots,g_r)=\det \left( g_j[\mp(i-1)] \right)_{i,j=1}^r=\det \left( g_j(x\pm(i-1)h) \right)_{i,j=1}^r.
$$
We simply write $\Wr$ for $\Wr^-$.

Let $\cR$ be an $(m|n)$-rational difference operator over $\bK$. Let $\cR=\cD_{\bar 0}\cD_{\bar 1}^{-1}$ be a fractional factorization such that $\mathrm{ord}~\cD_{\bar 1}=n$ and the constant term of $\cD_{\bar 1}$ is $1$. By Proposition \ref{prop rdp}, such a fractional factorization of $\cR$ is unique.

Let $V=W_{\bar 0}=\ker \cD_{\bar 0} $, $U=W_{\bar 1}=\ker \cD_{\bar 1} $, $W=W_{\bar 0}\oplus W_{\bar 1}$.

Given a basis $\{v_1,\dots, v_m\}$ of $V$, a basis $\{u_1,\dots, u_n\}$ of $U$, and a parity sequence $\bm s\in S_{m|n}$, define $d_i=1-f_i\tau$, where
\beq\label{eq wronski coeff}
\begin{split}
&f_i= \ln'  \frac{\Wr(v_1,v_2,\dots,v_{\bm s_i^++1},u_1,u_2,\dots,u_{\bm s_i^-})}{\Wr(v_1,v_2,\dots,v_{\bm s_i^+},u_1,u_2,\dots,u_{\bm s_i^-})[1]}, \qquad {\rm if}\ s_i=1, \\ &f_i= \ln'  \frac{\Wr(v_1,v_2,\dots,v_{\bm s_i^+},u_1,u_2,\dots,u_{\bm s_i^-+1})}{\Wr(v_1,v_2,\dots,v_{\bm s_i^+},u_1,u_2,\dots,u_{\bm s_i^-})[1]},  \qquad {\rm if}\ s_i=-1.
\end{split}
\eeq

Note that if two bases $\{v_1,\dots, v_m\}$, $\{\tl{v}_1,\dots, \tl{v}_m\}$ generate the same full flag of $V$ and two bases $\{u_1,\dots, u_n\}$, $\{\tl{u}_1,\dots, \tl{u}_n\}$ generate the same full flag of $U$, then the coefficients $f_i$ computed from $v_j,u_j$ and from $\tl{v}_j,\tl{u}_j$ are the same.

\begin{prop}\label{prop flag factor}
We have a complete factorization of $\cR$ with parity $\bm s$: $\cR=d_1^{s_1}\cdots d_{m+n}^{s_{m+n}}$.
\end{prop}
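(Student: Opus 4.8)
The plan is to prove this by induction on $m+n$, reducing the parity sequence $\bm s$ to the standard one by the commutation moves described just before the statement. The base case is essentially the classical fact about completely factorable difference operators: if $\cD_{\bar 0}$ is a difference operator of order $m$ with $\dim\ker\cD_{\bar 0}=m$ and $\{v_1,\dots,v_m\}$ is a basis of $\ker\cD_{\bar 0}$, then $\cD_{\bar 0}=a_0\,d_1\cdots d_m$ with $d_i=1-f_i\tau$ and $f_i=\ln'\bigl(\Wr(v_1,\dots,v_i)/\Wr(v_1,\dots,v_{i-1})[1]\bigr)$; the same statement for $\cD_{\bar 1}$ gives $\cR=\cD_{\bar 0}\cD_{\bar 1}^{-1}$ the factorization with the standard parity sequence $\bm s_0=(1,\dots,1,-1,\dots,-1)$, since for $\bm s_0$ one has $\bm s_i^+=m-i$, $\bm s_i^-=0$ for $i\le m$, so the first $m$ factors have coefficients built from $\Wr(v_1,\dots,v_{m-i+1})/\Wr(v_1,\dots,v_{m-i})[1]$, and for $i>m$ we get $\bm s_i^+=0$ so the last $n$ factors are built from the Casoratians of the $u_j$'s; one checks this matches the ordinary factorization of $\cD_{\bar 0}$ read right-to-left and of $\cD_{\bar 1}^{-1}$.

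Next I would handle the inductive step. Suppose the statement holds for the parity sequence $\bm s$, and let $\tl{\bm s}=\bm s^{[i]}$ differ by swapping positions $i,i+1$ where $s_i\ne s_{i+1}$; without loss of generality $s_i=1$, $s_{i+1}=-1$. Write the known factorization $\cR=d_1^{s_1}\cdots d_{m+n}^{s_{m+n}}$ with the $f_j$'s of \eqref{eq wronski coeff}. Applying Lemma \ref{eq relations diff} to the adjacent pair $d_i^{1}d_{i+1}^{-1}=(1-f_i\tau)(1-f_{i+1}\tau)^{-1}$ produces $\tl d_i^{-1}\tl d_{i+1}^{1}$ with explicit coefficients $\tl f_i=f_{i+1}[1]\ln'(f_i-f_{i+1})$ and $\tl f_{i+1}=f_i[1]\ln'(f_i-f_{i+1})$ (in the notation $c,d$ of the lemma). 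What must be verified is that these $\tl f_i,\tl f_{i+1}$ coincide with the coefficients prescribed by \eqref{eq wronski coeff} for the parity sequence $\tl{\bm s}$, while all other factors $d_j$ ($j\ne i,i+1$) are unchanged and already agree with \eqref{eq wronski coeff} for $\tl{\bm s}$. The latter is automatic because $\bm s_j^\pm$ is unaffected by the transposition for $j<i$ and for $j>i+1$; and for $j=i,i+1$ one has the key combinatorial identity $(\tl{\bm s})_i^\pm$ versus $\bm s_i^\pm$, $\bm s_{i+1}^\pm$: concretely, with $s_i=1,s_{i+1}=-1$ one gets $\bm s_i^+=\bm s_{i+1}^++1$, $\bm s_i^-=\bm s_{i+1}^-$, and after the swap $(\tl{\bm s})_i^-=\bm s_{i+1}^-+1$, $(\tl{\bm s})_{i+1}^+=\bm s_{i+1}^+$. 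So the content reduces to a Wronskian (Casoratian) identity: writing $A=\Wr(v_1,\dots,v_{k+1},u_1,\dots,u_\ell)$, $B=\Wr(v_1,\dots,v_k,u_1,\dots,u_\ell)$, $C=\Wr(v_1,\dots,v_k,u_1,\dots,u_{\ell+1})$, $E=\Wr(v_1,\dots,v_{k+1},u_1,\dots,u_{\ell+1})$ with $k=\bm s_{i+1}^+$, $\ell=\bm s_{i+1}^-$, the claim is that $f_i=\ln'(A/B[1])$, $f_{i+1}=\ln'(C/B[1])$ transform under Lemma \ref{eq relations diff} exactly into $\tl f_i=\ln'(C/B[1])$-type and $\tl f_{i+1}=\ln'(E/A[1])$-type expressions. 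This follows from the discrete (three-term) Plücker / Desnanot–Jacobi identity for Casoratians relating $A,B,C,E$ and the bilinear Hirota-type identity $AC[1]-A[1]C = B\,E[1]$ (up to the precise index placement), combined with the defining relation of $\ln'$.

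I expect the main obstacle to be precisely this Casoratian identity bookkeeping: one must pin down the exact shifts $[\pm 1]$ so that the two displayed systems in Lemma \ref{eq relations diff} produce the Wronskians with the correct arguments, and this is sensitive to the convention $\Wr=\Wr^-$ and to the direction in which factors are read. A clean way to organize this is to first prove a single lemma: for rational functions $g_1,\dots,g_r$ with nonzero Casoratians, the operator $1-f\tau$ with $f=\ln'(\Wr(g_1,\dots,g_r)/\Wr(g_1,\dots,g_{r-1})[1])$ annihilates $g_r$ modulo the span of $g_1,\dots,g_{r-1}$ in the appropriate sense, and that inserting/removing a function amounts to the Lemma \ref{eq relations diff} move; then the proposition is an iteration. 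Once the $n=0$ (or $m=0$) case and one elementary swap are nailed down, the general parity sequence follows since, as noted in the excerpt, any $\bm s$ is reached from $\bm s_0$ by a sequence of adjacent transpositions, and the resulting factorization is independent of the chosen sequence of moves because the $f_i$'s are Wronskian expressions depending only on $\bm s$ (not on the path), which also yields, as a byproduct, the well-definedness claimed in the remark preceding the proposition.
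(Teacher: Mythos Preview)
Your approach is essentially the same as the paper's: establish the standard parity case $\bm s_0$ from the classical factorization of difference operators (the paper cites \cite{MV03}), then pass to an arbitrary $\bm s$ by adjacent transpositions, verifying each swap via a discrete Wronskian identity. The Casoratian identity you are hunting for is exactly the one the paper invokes, namely \cite[Lemma~9.5]{MV03}:
\[
\Wr\bigl(A,C\bigr)=E\cdot B[1],
\]
in your notation $A,B,C,E$, which (together with Lemma~\ref{eq relations diff}) resolves the shift bookkeeping you flagged as the main obstacle; so your proposal is correct and matches the paper's argument.
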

\begin{proof}
The statement for the case of $\bm s=\bm s_0$ follows from \cite{MV03}.

Let $\bm s$ and $\tl{\bm s}$ be two parity sequences which differ only in positions $i, i+1$. Explicitly, $s_j=\tl {s}_j$ for $j\neq i, i+1$ and $s_i=-s_{i+1}=-\tl{s}_i=\tl {s}_{i+1}$.
It is clear that $d_j=\tl {d}_j$ for $j\neq i, i+1$. In addition, the equality $d_i^{s_i}d_{i+1}^{s_{i+1}}=\tl {d}_i^{\ \tl{s}_i}\tl{d}_{i+1}^{\ \tl{s}_{i+1}}$ follows from the discrete Wronskian identity, see \cite[Lemma 9.5]{MV03},
\begin{align*}
&\Wr\big(\Wr(v_1,v_2,\dots,v_{\bm s_i^++1},u_1,u_2,\dots,u_{\bm s_i^-}),\Wr(v_1,v_2,\dots,v_{\bm s_i^+},u_1,u_2,\dots,u_{\bm s_i^-+1})\big) \\
=\,&
\Wr(v_1,v_2,\dots,v_{\bm s_i^++1},u_1,u_2,\dots,u_{\bm s_i^-+1})\Wr(v_1,v_2,\dots,v_{\bm s_i^+},u_1,u_2,\dots,u_{\bm s_i^-})[1].\qedhere
\end{align*}
\end{proof}

By Proposition \ref{prop flag factor}, we have maps $\varpi: \sF(W) \to \mathfrak F(\cR)$ and $\varpi^{\bm s}: \sF^{\bm s}(W) \to \mathfrak F^{\bm s}(\cR)$. 
\begin{cor}
The maps $\varpi$ and $\varpi^{\bm s}$ are bijections.\qed
\end{cor}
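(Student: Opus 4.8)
The plan is to construct an explicit two-sided inverse to $\varpi^{\bm s}$ for each fixed parity sequence $\bm s$, and then observe that these assemble into a two-sided inverse of $\varpi$ since the codomain decomposes as the disjoint union $\mathfrak F(\cR)=\bigsqcup_{\bm s}\mathfrak F^{\bm s}(\cR)$ while $\sF(W)=\bigsqcup_{\bm s}\sF^{\bm s}(W)$, and $\varpi$ restricts to $\varpi^{\bm s}$ on each piece. So it suffices to treat $\varpi^{\bm s}$. Using the identification $\sF^{\bm s}(W)\cong\sF(W_{\bar 0})\times\sF(W_{\bar 1})$ already recorded in the excerpt, a point of $\sF^{\bm s}(W)$ is a pair of full flags, one in $V=W_{\bar 0}=\ker\cD_{\bar 0}$ and one in $U=W_{\bar 1}=\ker\cD_{\bar 1}$; and by the remark preceding Proposition \ref{prop flag factor}, the coefficients $f_i$ in \eqref{eq wronski coeff}, hence the factorization $\varpi^{\bm s}(\sF)=d_1^{s_1}\cdots d_{m+n}^{s_{m+n}}$, depend only on this pair of flags and not on the chosen bases. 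Thus $\varpi^{\bm s}$ is well-defined as a map out of $\sF(W_{\bar 0})\times\sF(W_{\bar 1})$.

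For injectivity, I would show that a complete factorization with parity $\bm s$ determines the two flags. Given $\cR=d_1^{s_1}\cdots d_{m+n}^{s_{m+n}}$, collect the sub-factorization consisting of the even positions (those with $s_i=1$): their product, read left to right, is a monic completely factorable difference operator of order $m$, and I claim it must be $\cD_{\bar 0}$ (up to the common constant). Indeed, deleting the odd factors and the inverses is exactly the passage from $\cR$ back to its numerator in the $\bm s$-ordering; one checks using Proposition \ref{prop rdp}(iii) and Lemma \ref{lem empty cap} (so that the numerator/denominator are coprime and the factorization is minimal) that this recovers $\cD_{\bar 0}$. The partial products $d_1^{s_1}\cdots d_k^{s_k}$ restricted to even positions then give a flag of kernels inside $V$, and \eqref{eq wronski coeff} shows this flag is precisely the flag generated by $v_1,\dots,v_m$; similarly for $U$. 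Hence the pair of flags is reconstructed from the factorization, giving injectivity.

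For surjectivity, I would argue that every complete factorization with parity $\bm s$ arises this way. Start from the minimal fractional factorization $\cR=\cD_{\bar 0}\cD_{\bar 1}^{-1}$; by hypothesis $\cD_{\bar 0},\cD_{\bar 1}$ are completely factorable, so each of $\ker\cD_{\bar 0}$, $\ker\cD_{\bar 1}$ has the full dimension, and any complete factorization of $\cD_{\bar 0}$ (resp. $\cD_{\bar 1}$) corresponds to a full flag in its kernel — this is the classical one-variable statement, e.g. the content of the $\bm s=\bm s_0$ case cited from \cite{MV03} in the proof of Proposition \ref{prop flag factor}. A general complete factorization with parity $\bm s$ is obtained from one with parity $\bm s_0$ by the sequence of elementary transpositions described after Lemma \ref{eq relations diff}, and each such transposition corresponds, via the discrete Wronskian identity \cite[Lemma 9.5]{MV03} quoted in the proof of Proposition \ref{prop flag factor}, exactly to rewriting the coefficients $f_i$, $f_{i+1}$ in the form \eqref{eq wronski coeff} for the new parity sequence while keeping the underlying flags in $V$ and $U$ fixed. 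Therefore the image of $\varpi^{\bm s_0}$, transported by these transpositions, is all of $\mathfrak F^{\bm s}(\cR)$, which is surjectivity.

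The main obstacle I anticipate is the bookkeeping in surjectivity: one must verify that the canonical bijection between complete factorizations of different parities (built transposition by transposition from Lemma \ref{eq relations diff}) is compatible, factor by factor, with the explicit Wronskian coefficients \eqref{eq wronski coeff} — i.e. that the abstract rewriting of factors coincides on the nose with the change of formula \eqref{eq wronski coeff} under $\bm s\mapsto\bm s^{[i]}$. This is precisely where the discrete Wronskian identity in the proof of Proposition \ref{prop flag factor} is used, and checking it amounts to matching the two expressions for $d_i^{s_i}d_{i+1}^{s_{i+1}}=\tl d_i^{\,s_{i+1}}\tl d_{i+1}^{\,s_i}$ against that identity; the even/base case is inherited from \cite{MV03}. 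Once that compatibility is in place, injectivity and surjectivity follow cleanly, and assembling over $\bm s\in S_{m|n}$ gives that $\varpi$ is a bijection.
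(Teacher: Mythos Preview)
Your transposition-based argument (what you label ``surjectivity'') is correct and is exactly what the paper intends: the corollary carries only a \qed, being immediate from the proof of Proposition~\ref{prop flag factor}, which shows that $\varpi$ intertwines the canonical bijections $\mathfrak F^{\bm s}(\cR)\leftrightarrow\mathfrak F^{\bm s^{[i]}}(\cR)$ (built from Lemma~\ref{eq relations diff}) with the identifications $\sF^{\bm s}(W)\cong\sF(V)\times\sF(U)\cong\sF^{\bm s^{[i]}}(W)$. Since $\varpi^{\bm s_0}$ is a bijection by the purely even case from \cite{MV03}, every $\varpi^{\bm s}$ is then a composite of bijections. Note that this already gives bijectivity, not merely surjectivity, so no separate injectivity argument is needed.

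Your injectivity paragraph, however, contains a false step: the product of the factors $d_i$ at positions with $s_i=1$, read left to right, is \emph{not} equal to $\cD_{\bar 0}$ for general $\bm s$. Already for $m=n=1$ with $\bm s=(-1,1)$, writing $\cR=(1-a\tau)(1-b\tau)^{-1}=(1-c\tau)^{-1}(1-d\tau)$ via Lemma~\ref{eq relations diff}, the lone even-position factor is $1-d\tau$ with $d=a[1]\ln'(a-b)$, generically different from $\cD_{\bar 0}=1-a\tau$. There is no well-defined ``numerator in the $\bm s$-ordering'' obtained by simply deleting the odd factors; the non-commutativity of $\bK[\tau]$ prevents that. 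This paragraph should be dropped. A smaller point: do not appeal to Lemma~\ref{lem empty cap} here --- it is a forward reference specific to operators arising from the BAE, whereas in this abstract setting the trivial intersection of kernels follows directly from minimality via Proposition~\ref{prop rdp}(iii) (under the paper's standing assumption that $\dim\ker\cD_{\bar 0}=m$ and $\dim\ker\cD_{\bar 1}=n$).
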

Thus the set of complete factorizations of $\cR$ is canonically identified with the variety of full superflags of $W$. 

\section{XXX model}\label{sec XXX}
In this section we recall the definition of the super Yangian $\YglMN$ and some facts about the XXX model associated with $\YglMN$. Our main source is \cite{BR}.

\subsection{Super Yangian $\YglMN$ and transfer matrix}
Let $\C^{m|n}$ be the complex vector superspace with $\dim(\C^{m|n}_{\bar 0})=m$ and $\dim(\C^{m|n}_{\bar 1})=n$. We choose a homogeneous basis $e_1,\dots,e_{m+n}$ of $\C^{m|n}$ such that $|e_i|=0$ for $1\lle i\lle m$ and $|e_j|=1$ for $m+1\lle j\lle m+n$. Denote by $E_{ij}\in\End(\C^{m|n})$ the linear operator of parity $|i|+|j|$ such that $E_{ij}e_k=\delta_{jk}e_i$ for $1\lle i,j,k\lle m+n$.

The \emph{super Yangian} $\YglMN$ is a unital associative algebra with generators $\mc L_{ij}^{(k)}$ of parity $|i|+|j|$, $i,j=1,\dots,m+n$, $k\in \Z_{>0}$.

Consider the generating series
\[
\mc L_{ij}(x)=\sum_{k=0}^\infty \mc L_{ij}^{(k)}x^{-k}, \qquad \mc L_{ij}^{(0)}=\delta_{ij},
\]
and combine the series into a linear operator $\mc L(x)=\sum_{i,j=1}^{m+n}E_{ij}\otimes \mc L_{ij}(x)\in \End(\C^{m|n})\otimes \YglMN[[x^{-1}]]$. The defining relations of $\YglMN$ are given by
\beq\label{RTT}
R^{(12)}(x_1-x_2)\mc L^{(13)}(x_1)\mc L^{(23)}(x_2)=\mc L^{(23)}(x_2)\mc L^{(13)}(x_1)R^{(12)}(x_1-x_2),
\eeq
where $R(x)\in \End(\C^{m|n}\otimes \C^{m|n})$ is the super R-matrix defined by
\[
x\, R(x)=x\,\id +h\sum_{i,j=1}^{m+n}(-1)^{|j|}E_{ij}\otimes E_{ji}.
\]
\begin{rem}
Note that, for any non-zero $z\in\C^{\times}$, the map $\mc L_{ij}(x)\mapsto \mc L_{ij}(x/z)$ induces an automorphism of $\YglMN$, therefore the super Yangians $\YglMN$ defined by different non-zero $h$ are actually isomorphic. In particular, we can always rescale $h$ to $1$.\qed
\end{rem}

The R-matrix $R(x)$ satisfies the graded Yang-Baxter equation,
\[
R^{(12)}(x_1-x_2)R^{(13)}(x_1)R^{(23)}(x_2)=R^{(23)}(x_2)R^{(13)}(x_1)R^{(12)}(x_1-x_2).
\]

The super commutator relations obtained from \eqref{RTT} are explicitly given by
\beq\label{com relations}
\begin{split}
(x_1-x_2)[\mc L_{ij}(x_1),\mc L_{k\ell}(x_2)]=&\ (-1)^{|i||k|+|\ell||i|+|\ell||k|}h\big(\mc L_{kj}(x_2)\mc L_{i\ell}(x_1)-\mc L_{kj}(x_1)\mc L_{i\ell}(x_2)\big)\\
=&\ (-1)^{|i||j|+|\ell||i|+|\ell||j|}h\big(\mc L_{i\ell}(x_1)\mc L_{kj}(x_2)-\mc L_{i\ell}(x_2)\mc L_{kj}(x_1)\big).
\end{split}
\eeq
In particular, one has
\beq\label{zero mode com relations}
[\mc L_{ij}^{(1)},\mc L_{k\ell}(x)]=(-1)^{|i||k|+|\ell||i|+|\ell||k|}h\big(\delta_{i\ell}\mc L_{kj}(x)-\delta_{kj}\mc L_{i\ell}(x)\big).
\eeq

The super Yangian $\YglMN$ is a Hopf algebra with the coproduct
\[
\Delta: \mc L_{ij}(x)\mapsto \sum_{k=1}^{m+n} (-1)^{(|k|+|i|)(|k|+|j|)}\mc L_{ik}(x)\otimes \mc L_{kj}(x),\qquad i,j=1,\dots,m+n.
\]
The super Yangian $\YglMN$ contains the algebra $\UglMN$ as a Hopf subalgebra. The embedding is given by the map $e_{ij}\mapsto (-1)^{|i|}\mc L_{ji}^{(1)}/h$ for $1\lle i,j\lle m+n$. We identify $\UglMN$ with the image of this map.

The \emph{transfer matrix} $\mc T(x)$ is defined as the supertrace of $\mc L(x)$,
\[
\mc T(x)=\mathrm{str}(\mc L(x))=\sum_{i=1}^{m+n}(-1)^{|i|}\mc L_{ii}(x).
\]
It is known that the transfer matrices commute, $[\mc T(x_1),\mc T(x_2)]=0$. Moreover, the transfer matrix $\cT(x)$ commutes with the subalgebra $\UglMN$.

Since the transfer matrices commute, the transfer matrix can be considered as a generating function of integrals of motion of an integrable system.

\medskip

For any given complex number $z\in \C$, there is an automorphism
\[
\zeta_z:\YglMN\to \YglMN,\qquad \mc L_{ij}(x)\to \mc L_{ij}(x-z),
\]
where $(x-z)^{-1}$ is expanded as a power series in $x^{-1}$. The \emph{evaluation homomorphism} $\mathrm{ev}:\YglMN\to \UglMN$ is defined by the rule: $$\mc L_{ji}^{(a)}\mapsto (-1)^{|i|}\delta_{1a}he_{ij},$$ for $a\in\Z_{>0}$.

For any $\glMN$-module $V$ denote by $V(z)$ the $\YglMN$-module obtained by pulling back of $V$ through the homomorphism $\mathrm{ev}\circ\zeta_z$. The module $V(z)$ is called the \emph{evaluation module with the evaluation point} $z$.

Let $V$ be a $\YglMN$-module. Given a parity sequence $\bs s\in S_{m|n}$, a non-zero vector $v\in V$ is called an \emph{$\bs s$-singular vector} if
\[
\mc L_{ii}^{\bm s}(x)v=\Lambda_i(x)v,\qquad \mc L_{ij}^{\bm s}(x)v=0,\quad i>j,
\]
where $\Lambda_i(x)\in \C[[x^{-1}]]$ and $\mc L_{a,b}^{\bm s}(x)=\mc L_{\sigma_{\bs s}(a),\sigma_{\bs s}(b)}(x)$.

\begin{eg}
Let $L_{\lambda}$ be an irreducible polynomial $\glMN$-module of highest weight $\la$ with highest weight vector $v_\lambda$. Let $z$ be a complex number. Then the $\glMN$ $\bm s$-singular vector $v_\la^{\bm s}\in L_{\lambda}(z)$ is a $\YglMN$ $\bm s$-singular vector. Moreover, we have
\[
\mc L_{ii}^{\bm s}(x)v_\la^{\bm s}=\Big(1+\frac{s_i\,\la^{\bm s}(e_{ii}^{\bm s})h}{x-z}\Big)v_\la^{\bm s}=\frac{x-z+s_i\,\la^{\bm s}(e_{ii}^{\bm s})h}{x-z}v_\la^{\bm s},\quad i=1,2,\dots,m+n.\qedd
\]
\end{eg}

\subsection{Bethe ansatz equation}\label{sec super XXX bae}
We fix a parity sequence $\bs s\in S_{m|n}$, a sequence $\bs \lambda=(\lambda^{(1)},\dots,\lambda^{(p)})$ of polynomial $\gl_{m|n}$ weights, and a sequence $\bs z=(z_1,\dots,z_p)$ of complex numbers. We call $(\lambda^{(k)})^{\bm{s}}$, see \cite[Section 2]{HMVY18},  the \emph{weight at point $z_k$ with respect to $\bm{s}$}. We simply write $\la_i^{(k,\bm s)}$ for $(\lambda^{(k)})^{\bm{s}}(e_{ii}^{\bm s})$.

Let $\bs l=(l_1,\dots,l_{m+n-1})$ be a sequence of non-negative integers. Define $l=\sum_{i=1}^{m+n-1}l_i$. Let $\bm{t}=(t_{1}^{(1)},\dots,t_{l_1}^{(1)};\dots;t_{1}^{(m+n-1)},\dots,t_{l_{m+n-1}}^{(m+n-1)})$ be a collection of variables.  We say that $t_j^{(i)}$ has \emph{color $i$}. Define the \emph{$\glMN$ weight at $\infty$ with respect to $\bm{s}$, $\bm{\lambda}$, and $\bm{l}$} by 
\[
\lambda^{(\bs s,\infty)}=\sum_{k=1}^{p}(\lambda^{(k)})^{\bm{s}}-\sum_{i=1}^{m+n-1}l_i \alpha^{\bs s}_{i} .
\]

The \emph{Bethe ansatz equation} (BAE) \emph{associated to $\bs s$, $\bs z$, $\bm\lambda$, and $\bm{l}$}, is a system of algebraic equations in variables $\bs t$:
\begin{equation}\label{eq BAE XXX}
\resizebox{0.93\hsize}{!}{%
$\displaystyle\prod_{k=1}^p\frac{t_j^{(i)}-z_k+s_i\la_i^{(k,\bm s)}h}{t_j^{(i)}-z_k+s_{i+1}\la_{i+1}^{(k,\bm s)}h}\prod_{r=1}^{l_{i-1}}\frac{t_j^{(i)}-t_r^{(i-1)}+s_ih}{t_j^{(i)}-t_r^{(i-1)}}\prod_{\substack{r=1 \\ r\ne j}}^{l_{i}}\frac{t_j^{(i)}-t_r^{(i)}-s_ih}{t_j^{(i)}-t_r^{(i)}+s_{i+1}h}
\prod_{r=1}^{l_{i+1}}\frac{t_j^{(i)}-t_r^{(i+1)}}{t_j^{(i)}-t_r^{(i+1)}-s_{i+1}h}=1,$}
\end{equation}
where $i=1,\dots,m+n-1$, $j=1,\dots,l_i$. We call the single equation \eqref{eq BAE XXX} the {\it BAE for $\bs t$ related to $t_j^{(i)}$}.

We allow the following cancellations in the BAE,
\beq\label{eq cancel}
\frac{t_j^{(i)}-z_k+s_i\la_i^{(k,\bm s)}h}{t_j^{(i)}-z_k+s_{i+1}\la_{i+1}^{(k,\bm s)}h}=1, \text{ if } s_i\la_i^{(k,\bm s)}=s_{i+1}\la_{i+1}^{(k,\bm s)};\quad  \frac{t_j^{(i)}-t_r^{(i)}-s_ih}{t_j^{(i)}-t_r^{(i)}+s_{i+1}h}=1, \text{ if } s_i=-s_{i+1}.
\eeq
After these cancellations, we consider only the solutions that do not make the remaining denominators in \eqref{eq BAE XXX} vanish.

In addition, we impose the following condition. Suppose $(\alpha_i^{\bm s},\alpha_i^{\bm s})=0$ for some $i$. Consider the BAE for $\bm t$ related to $t_j^{(i)}$ with all $t_b^{(a)}$ fixed, where $a\ne i$ and $1\lle b\lle l_a$, this equation does not depend on $j$. Let $t_0^{(i)}$ be a solution of this equation with multiplicity $r$. Then we require that the number of $j$ such that $t_j^{(i)}=t_0^{(i)}$ is at most $r$, c.f. Lemma \ref{eq gl11 XXX BAE}, Theorem \ref{thm repro pro glmn}.

The group $\mathfrak{S}_{\bs l}=\mathfrak{S}_{l_1}\times\dots\times\mathfrak{S}_{l_{m+n-1}}$ acts on $\bs t$ by permuting the variables of the same color. 

We do not distinguish between solutions of the BAE in the same $\mathfrak{S}_{\bs l}$-orbit.

\begin{rem}\label{rem BAE limit}
Note that in the quasiclassical limit $h\to 0$, system \eqref{eq BAE XXX} becomes system (4.2) of
\cite{MVY}, which is the Bethe ansatz equation of Gaudin model associated to $\gl_{m|n}$.\qed
\end{rem}

\subsection{Bethe vector}
Let $\bs \lambda=(\lambda^{(1)},\dots,\lambda^{(p)})$ be a sequence of polynomial $\gl_{m|n}$ weights. Let $v_k^{\bs s}=v^{\bs s}_{(\lambda^{(k)})^{\bm{s}}}$ be an $\bs s$-singular vector in the irreducible $\gl_{m|n}$-module $L_{\lambda^{(k)}}$. Consider the tensor product of evaluation modules $L(\bm\lambda,\bm z)=\bigotimes_{k=1}^p L_{\lambda^{(k)}}(z_k)$. We also denote by $L(\bm\lambda)$ the corresponding $\glMN$-module.

Let $\bm l =(l_1,\dots,l_{m+n-1})$ be a collection of non-negative integers. The \emph{weight function} is a vector $w^{\bs s}(\bs t,\bs z)$ in $L(\bm\lambda,\bm z)$ depending on variables $\bm{t}=(t_{1}^{(1)},\dots,t_{l_1}^{(1)};\dots;t_{1}^{(m+n-1)},\dots,t_{l_{m+n-1}}^{(m+n-1)})$ and parameters $\bs z=(z_1,\dots,z_p)$. The weight function $w^{\bs s}(\bs t,\bs z)$ is constructed as follows, see \cite[Section 5.2]{BR}.

Set $l^{<a}=l_1+\cdots+l_{a-1}$, $a=1,\dots,m+n$. Note that $l=l^{<m+n}$. Consider a series in $l$ variables $\bm t$ with coefficients in $\YglMN$:
\begin{align*}
\mathbb B^{\bm s}_{l}(\bm t)= (\mathrm{str}^{\vphantom1}_{12\cdots l}\otimes \id)\Big(&\mc L^{(1,l+1)}(t_1^{(1)})\cdots\mc L^{(l,l+1)}(t_{l_{m+n-1}}^{(m+n-1)})\\
&\times 
\mathfrak R^{(1\dots l)}(\bm t){E^{\bm s}_{m+n,m+n-1}}^{\otimes l_{m+n-1}}\otimes\cdots\otimes {E^{\bm s}_{21}}^{\otimes l_1}
\otimes 1\Big),
\end{align*}
where
\beq\label{norm R}
\mathfrak R^{(1\dots l)}(\bm t)=\prod_{a<b}\mathop{\overrightarrow\prod}\limits_{1\lle j\lle l_b}\mathop{\overleftarrow\prod}\limits_{1\lle i\lle l_a}\frac{t_j^{(b)}-t_i^{(a)}}{t_j^{(b)}-t_i^{(a)}+s_bh}\,  R^{(l^{<b}+j,l^{<a}+i)}(t_j^{(b)}-t_i^{(a)})
\eeq
and the first product in \eqref{norm R} runs over $1\lle a<b\lle m+n-1$.

The weight function $w^{\bs s}(\bs t,\bs z)\in L(\bla,\bm z)$ is given by
\[
w^{\bs s}(\bs t,\bs z)=\mathbb B^{\bm s}_{l}(\bm t)\big(v_1^{\bm s}\otimes\cdots\otimes v_p^{\bm s}\big)\, .
\]

\begin{eg}
Let $m+n=2$ and $\bm t=(t_1,\dots,t_l)$, then
\beq\label{BV 2}
w^{\bm s}(\bm t,\bm z)=(-1)^{l|2|}\mc L_{12}^{\bm s}(t_1)\cdots \mc L_{12}^{\bm s}(t_l)\big(v_1^{\bm s}\otimes\cdots\otimes v_p^{\bm s}\big)
\eeq
is an example of the weight function.
\qed
\end{eg}

The following theorem is known.
\begin{thm}[\cite{BR}]\label{thm BR 08}
Suppose that $\bm{\lambda}$ is a sequence of polynomial $\glMN$ weights and $\bs t$ a solution of the BAE associated to $\bs s$, $\bs z$, $\bm\lambda$, and $\bm{l}$. If the vector $w^{\bs s}(\bs t,\bs z)\in L(\bla,\bm z)$ is well-defined and non-zero, then $w^{\bs s}(\bs t,\bs z)\in L(\bla,\bm z)$ is an eigenvector of the transfer matrix $\cT(x)$, $\mc T(x)w^{\bs s}(\bs t,\bs z)=\mc E(x)w^{\bs s}(\bs t,\bs z)$, where the eigenvalue $\mc E(x)$ is given by
\beq\label{hamiltonian}
\mc E(x)=\sum_{a=1}^{m+n}s_a\prod_{k=1}^p\frac{x-z_k+s_a\la_a^{(k,\bm s)}h}{x-z_k}\prod_{j=1}^{l_{a-1}}\frac{x-t_j^{(a-1)}+s_ah}{x-t_{j}^{(a-1)}}\prod_{j=1}^{l_a}\frac{x-t_j^{(a)}-s_ah}{x-t_{j}^{(a)}}\ .
\eeq
\end{thm}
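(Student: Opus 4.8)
The plan is to prove the theorem by the nested algebraic Bethe ansatz, following \cite{BR}. It is convenient to work with the $\bm s$-twisted generators $\mc L^{\bm s}_{ij}(x)=\mc L_{\sigma_{\bm s}(i),\sigma_{\bm s}(j)}(x)$, assigning to the index $i$ the parity corresponding to $s_i$; with this convention the defining relation \eqref{RTT}, the commutation relations \eqref{com relations}, and the coproduct retain the same form, so one may argue as if $\bm s=\bm s_0$ with $s_i$ in place of $(-1)^{|i|}$. The core of the argument is a family of commutation relations, obtained by iterating \eqref{com relations}, for the product $\mc L^{\bm s}_{aa}(x)\,\mathbb B^{\bm s}_l(\bm t)$: moving $\mc L^{\bm s}_{aa}(x)$ from the left to the right of $\mathbb B^{\bm s}_l(\bm t)$ produces a \emph{wanted} term, which keeps the spectral parameter $x$, multiplies $\mathbb B^{\bm s}_l(\bm t)$ on the right by $\mc L^{\bm s}_{aa}(x)$, and carries the scalar factor $\prod_j\frac{x-t^{(a-1)}_j+s_a h}{x-t^{(a-1)}_j}\prod_j\frac{x-t^{(a)}_j-s_a h}{x-t^{(a)}_j}$, together with \emph{unwanted} terms, which exchange $x$ with some Bethe root $t^{(i)}_j$ and turn a diagonal entry into an off-diagonal one. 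The prefactors in the normalization $\mathfrak R^{(1\dots l)}(\bm t)$ of \eqref{norm R} are tuned precisely so that, after symmetrizing over the nested R-matrices, all unwanted contributions of a fixed color $i$ and index $j$ collapse into a single term.

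Next I would apply this identity to $v^{\bm s}_1\otimes\cdots\otimes v^{\bm s}_p$. By the coproduct and the Example, this vector is $\YglMN$ $\bm s$-singular, with $\mc L^{\bm s}_{aa}(x)$ acting by $\prod_{k=1}^p\frac{x-z_k+s_a\la^{(k,\bm s)}_a h}{x-z_k}$ and with all lower-triangular entries $\mc L^{\bm s}_{ij}(x)$, $i>j$, acting by $0$. Summing the commutation identities over $a$ with the supertrace signs $s_a$, the wanted terms assemble exactly into the right-hand side of \eqref{hamiltonian} times $w^{\bm s}(\bm t,\bm z)$. Each surviving unwanted term, after acting on the singular vector and using these eigenvalues, is proportional to the left-hand side of the BAE \eqref{eq BAE XXX} for $\bm t$ related to the corresponding $t^{(i)}_j$ minus $1$, multiplied by a common nonzero factor; hence they all vanish because $\bm t$ solves the BAE. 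The hypotheses that $w^{\bm s}(\bm t,\bm z)$ be well-defined and non-zero ensure that this manipulation makes sense and that the conclusion is not vacuous.

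The main obstacle is the bookkeeping in the commutation step: unlike the rank-one case (where it reduces to the computation with \eqref{BV 2}), $\mathbb B^{\bm s}_l(\bm t)$ is a multicolored operator dressed by $\mathfrak R^{(1\dots l)}(\bm t)$, and one must check that the unwanted terms genuinely reorganize into residue-type contributions proportional to the BAE relations. I would handle this by induction on the number of colors $m+n-1$: isolate the action of $\mc L^{\bm s}_{m+n,m+n}(x)$ together with the operators $\mc L^{\bm s}_{a,m+n}(t^{(m+n-1)}_j)$, thereby reducing the claim to the analogous statement for $\mathrm{Y}(\gl_{m|n-1})$ or $\mathrm{Y}(\gl_{m-1|n})$ acting on a suitably twisted auxiliary tensor product, with the base case $m+n=2$ treated directly. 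The sign subtleties coming from the supertrace and from transposing odd operators are tracked throughout by the parity prefactors already present in \eqref{com relations} and in the coproduct. The full computation is carried out in \cite[Section~5]{BR}.
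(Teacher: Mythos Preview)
Your outline is correct and matches the standard nested algebraic Bethe ansatz argument of \cite{BR}; the paper itself does not supply a proof of this theorem but simply cites \cite{BR}, and the only case it treats explicitly (the $\gl_{1|1}$ case in Appendix~\ref{sec gl11 eg conj}, Theorem~\ref{thm gl11 eigenvalue in y}) follows exactly the wanted/unwanted-term computation you describe for the base case $m+n=2$.
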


Note that the eigenvalue $\mc E(x)$ depends on the parameters $\bm t$, $\bs s$, $\bs z$, and $\bm\lambda$. We drop this dependence for our notation.

If $\bs t$ is a solution of the BAE associated to $\bs s$, $\bs z$, $\bm\lambda$, and $\bm{l}$, then the value of the weight function $w^{\bs s}(\bs t,\bs z)$ is called the \emph{Bethe vector}.

We have the following standard statement regarding to Bethe vectors, c.f. \cite[Proposition 6.2]{MTV} and \cite[Theorem 4.3]{MVY}.

\begin{prop}\label{prop BV singular}
The Bethe vector $w^{\bs s}(\bs t,\bs z)$ is a $\gl_{m|n}$ $\bm s$-singular vector of weight $\la^{(\bm s,\infty)}$.
\end{prop}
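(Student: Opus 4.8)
\emph{Proof idea.} There are two things to verify: that $w^{\bs s}(\bs t,\bs z)$ has $\glMN$-weight $\la^{(\bm s,\infty)}$, and that it is annihilated by the raising operators $e^{\bm s}_{ij}$, $i<j$, of the Borel subalgebra $\fkb^{\bm s}$. The weight statement is routine from the construction: each term of $\mathbb B^{\bm s}_l(\bm t)$ is a product of entries $\mc L^{\bm s}_{ab}(t)$ of $\mc L(x)$ in which the $l_i$ factors associated with the color-$i$ variables each lower the $\glMN$-weight by $\alpha^{\bm s}_i$; more precisely, using the zero-mode relations \eqref{zero mode com relations} one checks that $[\,e^{\bm s}_{jj},\mathbb B^{\bm s}_l(\bm t)\,]=-\big(\sum_{i=1}^{m+n-1}l_i\alpha^{\bm s}_i\big)(e^{\bm s}_{jj})\,\mathbb B^{\bm s}_l(\bm t)$ for every $j$. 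Since $v_1^{\bm s}\otimes\cdots\otimes v_p^{\bm s}$ has weight $\sum_{k=1}^p(\lambda^{(k)})^{\bm s}$, the vector $w^{\bs s}(\bs t,\bs z)$ has weight $\sum_{k=1}^p(\lambda^{(k)})^{\bm s}-\sum_{i=1}^{m+n-1}l_i\alpha^{\bm s}_i=\la^{(\bm s,\infty)}$.

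For the singular property it suffices to show $e^{\bm s}_{i,i+1}\,w^{\bs s}(\bs t,\bs z)=0$ for $i=1,\dots,m+n-1$, since the simple root vectors $e^{\bm s}_{i,i+1}$ generate the nilpotent radical $\n^{\bm s}_+$ of $\fkb^{\bm s}$ and a weight vector killed by $\n^{\bm s}_+$ is $\bm s$-singular. Under the embedding $\UglMN\hookrightarrow\YglMN$ in the $\bm s$-presentation one has $e^{\bm s}_{i,i+1}=\pm\,\mc L^{\bm s,(1)}_{i+1,i}/h$, where $\mc L^{\bm s,(1)}_{i+1,i}$ denotes the coefficient of $x^{-1}$ in $\mc L^{\bm s}_{i+1,i}(x)$, so the claim is $\mc L^{\bm s,(1)}_{i+1,i}\,w^{\bs s}(\bs t,\bs z)=0$. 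I would prove this by commuting $\mc L^{\bm s,(1)}_{i+1,i}$ to the right through the product $\mc L^{(1,l+1)}(t_1^{(1)})\cdots\mc L^{(l,l+1)}(t_{l_{m+n-1}}^{(m+n-1)})$ and then past $\mathfrak R^{(1\dots l)}(\bm t)$ and the auxiliary matrices, using \eqref{zero mode com relations}; the interaction of $\mc L^{\bm s,(1)}_{i+1,i}$ with the creation operators is confined to colors $i-1,i,i+1$, so the computation localises to these colors and reduces to a rank-one problem: the $\gl_2$ computation when $s_i=s_{i+1}$, and the $\gl_{1|1}$ computation (cf. Lemma \ref{eq gl11 XXX BAE}) when $s_i=-s_{i+1}$, both of the type used in Section \ref{sec rep pro gl2 gl11}. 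Carrying the super-signs through, one arrives at an identity
\[
\mc L^{\bm s,(1)}_{i+1,i}\,w^{\bs s}(\bs t,\bs z)=\sum_{a=1}^{l_i} c_a(\bs t,\bs z)\;w^{\bs s}\!\big(\bs t^{(i;a)},\bs z\big),
\]
where $\bs t^{(i;a)}$ is $\bs t$ with the variable $t_a^{(i)}$ deleted (the weight function on the right being formed with $l_i$ replaced by $l_i-1$), and where each $c_a(\bs t,\bs z)$ equals, up to an explicit non-vanishing factor, the difference of the two sides of the BAE \eqref{eq BAE XXX} for $\bs t$ related to $t_a^{(i)}$. Hence, once $\bs t$ solves the BAE, all the $c_a$ vanish and $\mc L^{\bm s,(1)}_{i+1,i}\,w^{\bs s}(\bs t,\bs z)=0$; the conventions \eqref{eq cancel} and the extra condition on isotropic colors imposed in Section \ref{sec super XXX bae} ensure that every term on the right is well defined.

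The main technical obstacle is the bookkeeping of the super-signs produced as $\mc L^{\bm s,(1)}_{i+1,i}$ is pushed through the graded product defining $\mathbb B^{\bm s}_l(\bm t)$, together with the isotropic odd case $s_i=-s_{i+1}$, in which the $\gl_2$ pattern degenerates: there the relevant rank-one input is the $\YglMN$-analogue of a $\gl_{1|1}$ weight function, whose behaviour under the simple raising operator is governed by the $\gl_{1|1}$ relations of Section \ref{sec rep pro gl2 gl11}, and one has to verify that the unwanted terms still collapse into the single product $c_a$. Granting these points, the argument parallels \cite[Proposition 6.2]{MTV} and \cite[Theorem 4.3]{MVY}, the only genuinely new feature being the systematic presence of odd colors.
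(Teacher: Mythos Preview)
Your approach is essentially the same as the paper's: both note that the weight claim is immediate from the construction, identify $e^{\bm s}_{i,i+1}$ with the zero mode $\mc L^{\bm s,(1)}_{i+1,i}/h$ (up to sign), and argue that commuting this zero mode through the creation operators produces terms whose coefficients vanish precisely by the BAE; both cite \cite[Proposition~6.2]{MTV} as the template. The paper differs only organizationally: it works out the $\gl_{1|1}$ case explicitly in the appendix (obtaining $[\mc L_{21}^{(1)},\mc L_{12}(t_1)\cdots\mc L_{12}(t_l)]=\sum_i\nu_i(\bm t)\,\mc L_{12}(t_1)\cdots\widehat{\mc L_{12}(t_i)}\cdots\mc L_{12}(t_l)\,\cT(t_i)$, so that $c_0\cT(t_i)|0\rangle=0$ by the BAE), and for general $m,n$ it invokes the nested Bethe ansatz of \cite[Section~4]{BR} together with induction on $m+n$ rather than your direct ``localisation to colors $i-1,i,i+1$'' picture.
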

\begin{proof}
Clearly, the Bethe vector $w^{\bs s}(\bs t,\bs z)$ is a vector of weight $\la^{(\bm s,\infty)}$. We then show that $w^{\bs s}(\bs t,\bs z)$ is $\gl_{m|n}$ $\bm s$-singular.

We show it for the case of $m=n=1$ with the standard parity $\bm s_0$ in Section \ref{sec 7.2}. The general case follows from a similar computation using a combination of nested Bethe ansatz, as in \cite[Section 4]{BR}, and induction on $m+n$, see e.g. \cite[Proposition 6.2]{MTV}.
\end{proof}

\subsection{Sequences of polynomials}
We use the following convenient notation. We say that a sequence $\bm z=(z_1,\dots,z_p)$ of complex numbers is \emph{$h$-generic} if $z_i-z_j\notin h\Z$ for all $1\lle i<j\lle p$.

Let $\bm\lambda=(\lambda^{(1)},\dots,\lambda^{(p)})$ be a sequence of polynomial $\gl_{m|n}$ weights. Let $\bs z=(z_1,\dots,z_p)$ be an $h$-generic sequence of complex numbers. Fix a parity sequence $\bs s\in S_{m|n}$.

Define a sequence of polynomials $\bm{T}^{\bm{s}}=(T_1^{\bm{s}},\dots,T_{m+n}^{\bm{s}})$ associated to $\bm{s}$, $\bm{\lambda}$ and $\bm{z}$, 
\begin{equation}\label{eq functions T}
T_i^{\bm{s}}(x)=\prod_{k=1}^{p}\prod_{j=1}^{\la_i^{(k,\bm s)}}(x-z_k+s_ijh),\qquad i=1,\dots,m+n.
\end{equation}
Note that  $T^{\bs s}_i(T^{\bm{s}}_{i+1})^{-s_is_{i+1}}$ is a polynomial for all $i=1,\dots,m+n-1$.

Let $\bs l=(l_1,\dots,l_{m+n-1})$ be a sequence of non-negative integers. 

Let $\bm{t}=(t_{1}^{(1)},\dots,t_{l_1}^{(1)};\dots;t_{1}^{(m+n-1)},\dots,t_{l_{m+n-1}}^{(m+n-1)})$ be a sequence of complex numbers. Define a sequence of polynomials $\bm{y}=(y_1,\dots,y_{m+n-1})$ by 
\beq \label{y fun}
y_i(x)=\prod_{j=1}^{l_i}(x-t_j^{(i)}),\qquad i=1,\dots,m+n-1.
\eeq 
We say the \emph{sequence of polynomials $\bm{y}$ represents $\bm{t}$}. We have $\deg y_i=l_i$. 

We also set $y_0(x)=y_{m+n}(x)=1$.

If $\bm{t}$ is a solution of the BAE associated to $\bm{s}$, $\bm{z}$, $\bm{\lambda}$, and $\bm{l}$, then the eigenvalue  $\mc E(x)$ of the transfer matrix $\mc T(x)$ acting on the Bethe vector $w^{\bs s}(\bs t,\bs z)$, see \eqref{hamiltonian}, can be written in terms of $\bm y$ and $\bm T^{\bm s}$. Namely, we have
\beq\label{glmn eigenvalue}
\mc E(x)=\mc E_{\bm y}(x)=\sum_{a=1}^{m+n}s_a\,\frac{T_a^{\bm s}}{T_a^{\bm s}[s_a]}\frac{y_{a-1}[-s_a]}{y_{a-1}}\frac{y_a[s_a]}{y_a}\,.
\eeq

We do not consider zero polynomials $y_i(x)$ and
do not distinguish between polynomials $y_i(x)$ and $cy_i(x)$, $c\in \C^{\times}$. 
Hence, a sequence $\bs y$ defines a point in $\big(\bP(\C[x])\big)^{m+n-1}$, the direct product  of $m+n-1$ copies of the projective 
space associated to the vector space of polynomials. 
\medskip

We say that a sequence of polynomials $\bs y$ is \emph{generic with respect to $\bs s$, $\bm\lambda$, and $\bs z$} if it satisfies the following conditions:
\begin{enumerate}
\item if $s_is_{i+1}=1$, then $y_i$ has only simple roots and $y_i$ has no common roots with the polynomial $y_i[1]$;
\item the polynomial $y_i$ has no common roots with polynomials $y_{i-1}$, $y_{i-1}[-s_i]$, and $y_{i+1}[s_{i+1}]$;
\item all roots of $y_i$ are different from the roots of polynomial $T^{\bs s}_i(T^{\bs s}_{i+1})^{-s_is_{i+1}}$,
\end{enumerate}
for $i=1,\dots,m+n-1$.

Not all solutions of the BAE correspond to generic sequences of polynomials. For instance, if $m=2$, $n=p = 0$,
and $l$ is even, then $t_1 =\dots= t_l = 0$ is a solution of the BAE.

\section{Reproduction procedures for $\gl_2$ and $\gl_{1|1}$}
\label{sec rep pro gl2 gl11}
In this section, we recall the reproduction procedure for the XXX model associated to $\gl_2$ from \cite[Section 2]{MV03} and define its analogue for $\gl_{1|1}$. We define a rational difference operator associated to a solution of BAE. We also show that the reproduction procedure does not alter the rational difference operator and the corresponding eigenvalues obtained from Theorem \ref{thm BR 08}.

\subsection{Reproduction procedure for $\gl_2$.}
Set $m=2$ and $n=0$. We have the following identifications $\mathrm Y(\gl_{2|0})\cong\mathrm Y(\gl_{0|2})\cong\mathrm Y(\gl_{2})$. Let $\bm\lambda=(\lambda^{(1)},\dots,\lambda^{(p)})=\left((a_1,b_1),\dots,(a_p,b_p)\right)$ be a sequence of polynomial $\gl_2$ weights. We have $a_k, b_k \in\Z$,  $a_k\gge b_k\gge 0$, $k=1,\dots,p$.  Let $\bm z=(z_1,\dots,z_p)$ be an $h$-generic sequence of complex numbers. We have \[
T_1(x)=\prod_{k=1}^p\prod_{j=1}^{a_k}(x-z_k+jh),\qquad T_2(x)=\prod_{k=1}^p\prod_{j=1}^{b_k}(x-z_k+jh).
\]
Let $a=\deg T_1$ and $b=\deg T_2$.

Give a non-negative integer $l$ and variables $\bm t=(t_1,\dots,t_l)$. The BAE associated to $\bm\lambda$, $\bm z$, and $l$ is simplified to
\begin{equation}\label{eq gl2 XXX BAE}
\prod_{k=1}^p \frac{t_j-z_k+a_kh}{t_j-z_k+b_kh}\prod_{i=1,i\ne j}^l \frac{t_j-t_i-h}{t_j-t_i+h}=1,\qquad j=1,\dots,l.
\end{equation}

It is known that the BAE \eqref{eq gl2 XXX BAE} can be reformulated in terms of discrete Wronskian. Moreover, starting from a generic solution of BAE, one can construct a family of new solutions of the BAE in the following way. 
\begin{lem}[\cite{MV03}]\label{lem gl2 populations} 
Let $y$ be a  polynomial of degree $l$ which is generic with respect to $\bm{\lambda}$ and $\bm z$.
\begin{enumerate}
\item The polynomial $y\in\C[x]$ represents a solution of the BAE \eqref{eq gl2 XXX BAE} associated to $\bm\lambda$, $\bm z$ and $l$, if and only if there exists a polynomial $\tl{y}\in\C[x]$, such that
\begin{equation}\label{eq gl2 wronski}
\Wr^+(y,\tl{y})=T_1T_2^{-1}.
\end{equation}
\item  If $\tl{y}$ is generic, then $\tl{y}$ represents a solution of the BAE associated to $\bm\lambda$, $\bm z$ and $\tl{l}$, where $\tl{l}=\deg \tl{y}$.\qed
\end{enumerate}		
\end{lem}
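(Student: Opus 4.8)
The plan is to evaluate the Bethe ansatz equation at the roots of $y$, recast it as a residue condition, and observe that this is precisely the solvability requirement for the discrete equation $\Wr^+(y,\tilde y)=T_1T_2^{-1}$ in polynomials $\tilde y$; part (2) will then follow from the antisymmetry of $\Wr^+$. Throughout write $y(x)=\prod_{j=1}^l(x-t_j)$ and $T:=T_1T_2^{-1}\in\C[x]$; by genericity the $t_j$ are pairwise distinct, $y$ and $y[-1]$ have no common root (so $y(t_j\pm h)\ne 0$), and $T(t_j)\ne 0$ for all $j$.

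For part (1) I would first reformulate \eqref{eq gl2 XXX BAE}. Using $\prod_{i\ne j}(t_j-t_i-h)=-h^{-1}y(t_j-h)$, $\prod_{i\ne j}(t_j-t_i+h)=h^{-1}y(t_j+h)$, and $\prod_k\frac{t_j-z_k+a_kh}{t_j-z_k+b_kh}=\frac{T_1(t_j)\,T_2(t_j-h)}{T_1(t_j-h)\,T_2(t_j)}=\frac{T(t_j)}{T(t_j-h)}$, equation \eqref{eq gl2 XXX BAE} for $t_j$ becomes
\[
\frac{T(t_j)}{T(t_j-h)}=-\frac{y(t_j+h)}{y(t_j-h)},\qquad j=1,\dots,l
\]
(and if this holds then $T(t_j-h)\ne 0$ automatically, so the left-hand side is well defined). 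Next I would analyze the equation $\Wr^+(y,\tilde y)=T$, that is, $y(x)\tilde y(x+h)-y(x+h)\tilde y(x)=T(x)$. Substituting $\tilde y=y\phi$ converts it to $\phi[-1]-\phi=u$ with $u:=T\,(y\,y[-1])^{-1}$, and $\tilde y=y\phi$ is a polynomial if and only if $\phi$ has the form $(\text{polynomial})+\sum_j c_j(x-t_j)^{-1}$. The function $u$ is a polynomial, whose antidifference can be chosen polynomial, plus a simple pole at each $t_j$ with residue $\rho_j=T(t_j)\big(y'(t_j)\,y(t_j+h)\big)^{-1}$ and a simple pole at each $t_j-h$ with residue $\sigma_j=T(t_j-h)\big(y(t_j-h)\,y'(t_j)\big)^{-1}$. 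Comparing residues on both sides of $\phi[-1]-\phi=u$ at $t_j$ and at $t_j-h$ forces $-c_j=\rho_j$ and $c_j=\sigma_j$, so a valid $\phi$ exists if and only if $\rho_j=-\sigma_j$ for every $j$---which is exactly the reformulation of the BAE just obtained. This proves part (1): if the BAE holds, take $c_j:=\sigma_j$ and $\tilde y:=y\phi$, a polynomial with $\Wr^+(y,\tilde y)=T$; conversely, given a polynomial $\tilde y$ with $\Wr^+(y,\tilde y)=T$, put $\phi:=\tilde y/y$ (its poles are simple and occur only at the $t_j$, in fact one at each $t_j$ since $T(t_j)\ne 0$) and read off the BAE from its residues.

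For part (2) I would use $\Wr^+(\tilde y,-y)=-\Wr^+(\tilde y,y)=\Wr^+(y,\tilde y)=T_1T_2^{-1}$; since $-y\in\C[x]$ and $\tilde y$ is generic, part (1) applied with $\tilde y$ in the role of $y$ shows that $\tilde y$ represents a solution of the BAE associated to $\bm\lambda$, $\bm z$, and $\tilde l=\deg\tilde y$. The step that needs the most care is the reformulation of the Bethe ansatz equation at the $t_j$---handling the cancellations \eqref{eq cancel} and the proviso that no surviving denominator vanish---together with the claim that $\phi[-1]-\phi=u$ is solvable by a $\phi$ of the stated form exactly when $\rho_j=-\sigma_j$ for all $j$; the latter rests on the residue identity being the only obstruction to the discrete antidifferentiation and on polynomials always admitting polynomial antidifferences. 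The degree count $\tilde l=\deg T_1-\deg T_2-l+1$ and the fact that $\tilde y$ is unique only modulo $\C\,y$ are then immediate.
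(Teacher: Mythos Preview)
Your argument is correct. The reformulation of \eqref{eq gl2 XXX BAE} as $T(t_j)/T(t_j-h)=-y(t_j+h)/y(t_j-h)$ is right, the residue analysis of $\phi[-1]-\phi=u$ is sound under the stated genericity (simple roots, $y$ coprime to $y[1]$, $T(t_j)\neq 0$), and the antisymmetry argument for part~(ii) is clean. The only points I would tighten in a final write-up are the ones you already flag: verifying that the cancellations \eqref{eq cancel} and the ``no surviving denominator vanishes'' convention are compatible with the residue reformulation, and stating explicitly that any polynomial admits a polynomial discrete antiderivative, so that once the pole obstructions $\rho_j+\sigma_j=0$ vanish the remaining equation $\phi[-1]-\phi=(\text{polynomial})$ is always solvable in $\C[x]$.

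As for comparison with the paper: there is nothing to compare. The paper states this lemma with a terminal \qed\ and attributes it to \cite{MV03}; no proof is given here. Your residue/antidifference approach is essentially the standard route used in that reference, so you have reconstructed the intended argument.
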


Almost all $\tl{y}$ are generic with respect to $\bm{\lambda}$ and $\bm z$, and therefore by Lemma \ref{lem gl2 populations}  represent solutions of the BAE \eqref{eq gl2 XXX BAE}. Thus, from one solution of the BAE, we obtain a family of new solutions. Following the terminology of \cite{MV03}, we call this construction the \emph{$\gl_2$ reproduction procedure}.

Let $P_y$ be the closure of the set containing $y$ and all $\tl{y}$ as in Lemma \ref{lem gl2 populations}  in $\mathbb P(\C[x])$. We call $P_y$ the \emph{$\gl_2$ population originated at $y$}. The population $P_y$ can be identified with the projective line $\C\mathbb P^1$ through the correspondence $c_1y+c_2\tl y\mapsto (c_1:c_2)$.

The weight at infinity associated to the data $\bla$ and $l$ is given by $\lambda^{(\infty)}=(a-l,b+l)$. 
Suppose that the weight $\lambda^{(\infty)}$ is dominant, namely $2l\lle a-b$.
If $\tl l\ne l$, then the weight at infinity associated to $\bm \lambda$ and $\tl{l}$ is
\[\tl{\lambda}^{(\infty)}=(a-\tl{l},b+\tl{l})=(b+l-1,a-l+1)=s\cdot \lambda^{(\infty)},\]
where $s\in\mathfrak S_2$ is the non-trivial element in the Weyl group of $\gl_2$, and the dot denotes the shifted action.

Let $\tl y=\prod_{r=1}^{\tl{l}}(x- \tl{t}_r)$ and $\tl {\bm{t}}=(\tl{t}_1,\dots,\tl{t}_{\tl{l}})$. If $y$ is generic, then by Lemma \ref{lem gl2 populations}, $\tl {\bm t}$ is a solution of the BAE \eqref{eq gl2 XXX BAE} with $l$ replaced by $\tl l$. By Proposition \ref{prop BV singular}, the value of the weight function $w(\tl{\bm t},\bm z)$ is a singular vector. At the same time, $\tl{\lambda}^{(\infty)}$ is not dominant and therefore  $w(\tl{\bm t},\bm z)=0$ in $L(\bm \lambda)$. So, in a $\gl_2$ population only the unique polynomial (the one of the smallest degree) corresponds to an actual eigenvector in $L(\bm\lambda)$.

The eigenvalues corresponding to the solutions $y$ and $\tl y$, see \eqref{glmn eigenvalue}, are given by
\[
\mc E(x)=\frac{T_1y[1]}{T_1[1]y}+\frac{T_2y[-1]}{T_2[1]y},\quad \tl{\mc E}(x)=\frac{T_1\tl y[1]}{T_1[1]\tl y}+\frac{T_2\tl y[-1]}{T_2[1]\tl y}.
\]

\begin{lem}\label{lem eigen gl2}
The eigenvalues $\mc E(x)$ and $\tl{\mc E}(x)$ are the same.
\end{lem}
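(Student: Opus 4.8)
The plan is to show $\mc E(x)=\tl{\mc E}(x)$ by using the Wronskian relation \eqref{eq gl2 wronski}, which is the sole relation connecting $y$ and $\tl y$. First I would write $\mc E(x)$ and $\tl{\mc E}(x)$ over a common denominator and interpret the difference $\mc E(x)-\tl{\mc E}(x)$ as a rational function whose possible poles lie among the roots of $y$, $\tl y$, $T_1[1]$, and their $h$-shifts. The key algebraic input is that $\Wr^+(y,\tl y)=y\,\tl y[1]-y[1]\,\tl y=T_1T_2^{-1}$; multiplying through, $T_2\,(y\,\tl y[1]-y[1]\,\tl y)=T_1$. I would then manipulate $\mc E(x)$ directly: the two terms of $\mc E$ have common denominator $T_1[1]\,T_2[1]\,y$, and I would try to rewrite $\frac{T_1 y[1]}{T_1[1]y}+\frac{T_2 y[-1]}{T_2[1]y}$ so that the numerator factors through the Wronskian expression, producing a form manifestly symmetric in $y \leftrightarrow \tl y$.

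The cleanest route is probably as follows. Shift the Wronskian identity to get $y[1]\,\tl y[2]-y[2]\,\tl y[1]=T_1[1]T_2[1]^{-1}$ and also $y[-1]\,\tl y-y\,\tl y[-1]=T_1[-1]T_2[-1]^{-1}$ (the appropriate shifts of \eqref{eq gl2 wronski}), so that each ratio $y[1]/y$, $y[-1]/y$ appearing in $\mc E$ can be traded against the analogous ratios for $\tl y$ plus a correction term built purely from $T_1,T_2$ and their shifts. Substituting these into the expression for $\mc E(x)$, the $\tl y$-ratios assemble into $\tl{\mc E}(x)$, and the task reduces to checking that the leftover $T$-only terms cancel. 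That final cancellation is an identity among $T_1, T_2$ and their $\tau$-shifts with no $y$ involved, which one verifies by a short direct computation (clearing denominators $T_1[1]T_2[1]$).

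Alternatively — and this may be the slicker argument to record — one can invoke the rational difference operator. By the discussion in Section \ref{sec rep pro gl2 gl11} (to be developed), the $\gl_2$ reproduction procedure does not change the associated difference operator $\cD = \cD_{\bar 0}\cD_{\bar 1}^{-1}$, and $\mc E(x)$ is, up to normalization, determined by $\cD$ (it is essentially the sum of the ``characters'' of the two first-order factors, cf. \eqref{glmn eigenvalue}); since $y$ and $\tl y$ span the same kernel space $\ker\cD_{\bar 0}$ (both being solutions of $\Wr^+(y,\tl y)=T_1T_2^{-1}$, they generate the same two-dimensional space together with its reproductions), the eigenvalue is an invariant of that space and hence unchanged. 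However, since at this point in the paper the operator $\cD$ for $\gl_2$ has only just been introduced, I expect the intended proof is the direct one via the Wronskian identity.

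The main obstacle is bookkeeping: one must apply \eqref{eq gl2 wronski} at the correct shifts ($x$, $x-h$, $x+h$, \dots) so that every ratio $y[\pm1]/y$ in $\mc E$ is converted simultaneously, and then confirm the residual $T$-identity. There is no conceptual difficulty — the statement is essentially forced once one knows both $y$ and $\tl y$ satisfy the same Wronskian equation with the same right-hand side $T_1T_2^{-1}$ — but care is needed because the cancellations in \eqref{eq cancel} and the genericity of $y$ are implicitly used to ensure $\mc E(x)$ and $\tl{\mc E}(x)$ are genuinely equal as rational functions rather than merely agreeing away from a finite set; equality on a Zariski-dense set of $x$ suffices, so in the end it is enough to verify the identity generically.
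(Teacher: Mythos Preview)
Your approach is correct in outline and uses the same key input as the paper --- the Wronskian relation \eqref{eq gl2 wronski} --- but your organization is more roundabout than necessary. The paper simply computes the difference
\[
\tl{\mc E}(x)-\mc E(x)=\frac{T_1}{T_1[1]}\cdot\frac{\Wr^+(y,\tl y)[1]}{y\tl y}-\frac{T_2}{T_2[1]}\cdot\frac{\Wr^+(y,\tl y)}{y\tl y},
\]
and then a single application of \eqref{eq gl2 wronski} (and its shift by $h$) gives $\Wr^+(y,\tl y)/\Wr^+(y,\tl y)[1]=T_1T_2[1]/(T_2T_1[1])$, so the two terms cancel. There is no need to solve for $y[\pm1]/y$ in terms of $\tl y[\pm1]/\tl y$ and chase correction terms; grouping by the $T_1$- and $T_2$-summands already produces the Wronskian in each piece.

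Two small points. First, your expansion $\Wr^+(y,\tl y)=y\,\tl y[1]-y[1]\,\tl y$ has the shift sign reversed: with the paper's convention $f[i]=f(x-ih)$ and $\Wr^+(g_1,g_2)=\det\big(g_j[-(i-1)]\big)$, one has $\Wr^+(y,\tl y)=y\,\tl y[-1]-\tl y\,y[-1]$. This does not affect the argument but would derail the bookkeeping you outline. Second, the genericity/cancellation worries in your last paragraph are unnecessary here: the identity $\mc E=\tl{\mc E}$ is a formal equality of rational functions in $x$, valid as soon as \eqref{eq gl2 wronski} holds; no appeal to Zariski density or to the BAE cancellations \eqref{eq cancel} is needed. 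Your alternative via the difference operator $\cD(y)$ is, as you note, circular at this point in the paper, since the invariance $\cD(y)=\cD(\tl y)$ is stated only after this lemma and is in fact a repackaging of the same computation.
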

\begin{proof}
Note that
\begin{align*}
\tl{\mc E}(x)-\mc E(x)=\ &\frac{\Wr^+(y,\tl y)[1]}{y\tl y}\frac{T_1}{T_1[1]}-\frac{\Wr^+(y,\tl y)}{y\tl y}\frac{T_2}{T_2[1]}.
\end{align*}
By \eqref{eq gl2 wronski}, we have
\[
\frac{\Wr^+(y,\tl y)}{\Wr^+(y,\tl y)[1]}=\frac{T_1T_2[1]}{T_2T_1[1]}.
\]
Therefore the lemma follows.
\end{proof}
This fact can be reformulated in the following form.

Define a difference operator
$$
\cD(y)=\Big(1-\frac{T_1y[1]}{T_1[1]y} \,\tau\Big)\Big(1-\frac{T_2y[-1]}{T_2[1]y}\,\tau\Big).
$$
The operator $\cD(y)$ does not depend on a choice of polynomial $y$ in a population, $\cD(y)=\cD(\tl y)$.

\subsection{Reproduction procedure for $\gl_{1|1}$.}
\label{sec repro pro gl11} 
Set $m=n=1$. We have $ S_{1|1}=\{(1,-1),(-1,1)\}$. Let $\bm s$ and $\tl{\bm s}=\bm s^{[1]}$ be two different parity sequences in $ S_{1|1}$. Let $\bm\lambda=(\lambda^{(1)},\dots,\lambda^{(p)})$ be a sequence of polynomial $\gl_{1|1}$ weights. For each $k=1,\dots,p$, let us write $(\lambda^{(k)})^{\bm s}_{[\bm s]}=(a_k,b_k)$, where $a_k,b_k\in\Z_{\gge 0}$ and if $a_k=0$ then $b_k=0$. Note that $\lambda^{(k)}$ is atypical if and only if $a_k+b_k = 0$. 
Let $\bm z=(z_1,\dots,z_p)$ be an $h$-generic sequence of complex numbers.

Let $$\tl{a}_k=\begin{cases} b_k+1 \ \ &{\rm if}\ \ a_k+b_k\neq 0,
\\ 0\ &{\rm if}\ \ a_k+b_k=0,\end{cases} \qquad  \tl{b}_k=\begin{cases} a_k-1 \ \ &{\rm if}\ \ a_k+b_k\neq 0,
\\ 0\ &{\rm if}\ \ a_k+b_k=0.\end{cases}  $$

Equation \eqref{eq functions T} becomes
\[
T_1^{\bm{s}}=\prod_{k=1}^p\prod_{j=1}^{a_k}(x-z_k+s_1jh),\quad T_2^{\bm{s}}=\prod_{k=1}^p\prod_{j=1}^{b_k}(x-z_k+s_2jh),
\]
\[
T_1^{\tl{\bm{s}}}=\prod_{k=1}^p\prod_{j=1}^{\tl a_k}(x-z_k+\tl s_1jh),
\quad
T_2^{\tl{\bm{s}}}=\prod_{k=1}^p\prod_{j=1}^{\tl b_k}(x-z_k+\tl s_2jh).
\]
Let $a=\deg T_1^{\bm s}$, $b=\deg T_2^{\bm s}$. Similarly, let $\tl{a}=\deg T_1^{\tl{\bm s}}$, $\tl{b}=\deg T_2^{\tl{\bm s}}$. Suppose the number of typical weights in $\bla$ is $r$, then $\tl{a}=b+r$ and $\tl{b}=a-r$. 

Let $l$ be a non-negative integer. Let $\bm t=(t_1,\dots,t_l)$ be a collection of variables. The Bethe ansatz equation associated to $\bm s$, $\bm\lambda$, $\bm z$, and $l$, is given as follows,
\beq\label{eq gl11 XXX BAE}
\prod_{\substack{k=1 \\ a_k+b_k\neq 0}}^p\frac{t_j-z_k+s_1a_kh}{t_j-z_k+s_2b_kh}=1, \qquad j=1,\dots,l.
\eeq

The Bethe ansatz equation \eqref{eq gl11 XXX BAE} can be rewritten in the form		
\[
\varphi^{\bm s}(t_j)-\psi^{\bm s}(t_j)=0,
\]
where
\[
\varphi^{\bm s}=\prod_{\substack{k=1 \\ a_k+b_k\neq 0}}^p(x-z_k+ s_1a_kh),\qquad \psi^{\bm s}=\prod_{\substack{k=1 \\ a_k+b_k\neq 0}}^p(x-z_k+ s_2b_kh).
\] 
Note that $\varphi^{\bm s}=\psi^{\tl{\bm s}}[-s_1]$ and $\psi^{\bm s}=\varphi^{\tl{\bm s}}[-s_1]$. Thus, in the case of $\gl_{1|1}$, the BAEs \eqref{eq gl11 XXX BAE} associated to $\bm s$ and $\tl{\bm s}$ coincide up to a shift. 

\medskip

We call a sequence of polynomial $\gl_{1|1}$ weights $\bla$ {\it typical} if at least one of the weights $\lambda^{(k)}$ is typical. 
Note that $\bm \lambda$ is typical if and only if $a+b\neq 0$. In other words, $\bm \lambda$ is typical if and only if $T_1^{\bm{s}}T_2^{\bm{s}}\ne 1$.

The BAE \eqref{eq gl11 XXX BAE} is reformulated as follows, c.f. \cite[equation (A.12)]{GLM}.
\begin{lem}\label{lemma gl1|1 repro pro} 
Let $y$ be a polynomial of degree $l$. Let $\bm \lambda$ be typical.
\begin{enumerate}
\item The polynomial $y$ represents a solution of the BAE \eqref{eq gl11 XXX BAE} associated to $\bm s$, $\bm z$, $\bm\lambda$, and $l$, if and only if there exists a polynomial $\tl{y}$, such that 
\begin{equation}\label{eq gl11 wronski}
y\cdot\tl{y}[-s_1]=\varphi^{\bm s}-\psi^{\bm s}.
\end{equation}
\item  The polynomial $\tl{y}$ represents a solution of the BAE \eqref{eq gl11 XXX BAE} associated to $\tl{\bm{s}}$, $\bm z$, $\bm{\lambda}$, and $\tl{l}$, where $\tl{l}=\deg \tl{y}=r-1-l$.
\qed
\end{enumerate}
\end{lem}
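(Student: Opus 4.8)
The plan is to reduce both parts to an elementary divisibility statement for polynomials. First I would record the relevant properties of $\varphi^{\bm s}$ and $\psi^{\bm s}$: both are monic of degree $r$ (the number of typical $\lambda^{(k)}$), and since $\bm z$ is $h$-generic and $s_2=-s_1$, no factor $x-z_k+s_1a_kh$ of $\varphi^{\bm s}$ can equal a factor $x-z_{k'}+s_2b_{k'}h$ of $\psi^{\bm s}$: equality with $k=k'$ would force $a_k+b_k=0$, which is excluded for typical weights, and equality with $k\ne k'$ would force $z_k-z_{k'}\in h\Z$. Hence $\varphi^{\bm s}$ and $\psi^{\bm s}$ are coprime, so any root of $\varphi^{\bm s}-\psi^{\bm s}$ is a root of neither $\varphi^{\bm s}$ nor $\psi^{\bm s}$; and comparing $x^{r-1}$-coefficients shows the leading coefficient of $\varphi^{\bm s}-\psi^{\bm s}$ equals $s_1h(a+b)$, which is nonzero because $\bm\lambda$ is typical, so $\deg(\varphi^{\bm s}-\psi^{\bm s})=r-1$.

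For part (1), recall that the Bethe ansatz equation for $\gl_{1|1}$ is \eqref{eq gl11 XXX BAE}, that is, $(\varphi^{\bm s}-\psi^{\bm s})(t_j)=0$ subject to $\psi^{\bm s}(t_j)\ne0$; by the first step the proviso is automatic once $(\varphi^{\bm s}-\psi^{\bm s})(t_j)=0$. Moreover, since the only root of $\gl_{1|1}$ is isotropic, the admissibility condition imposed after \eqref{eq cancel} says precisely that the multiplicity of any value among the $t_j$ does not exceed its multiplicity as a root of $\varphi^{\bm s}-\psi^{\bm s}$. Writing $y=\prod_{j=1}^{l}(x-t_j)$, these conditions together say exactly that $y\mid\varphi^{\bm s}-\psi^{\bm s}$. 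Since $g\mapsto g[s_1]$ is a degree-preserving $\C$-algebra automorphism of $\C[x]$ that preserves divisibility, $y\mid\varphi^{\bm s}-\psi^{\bm s}$ is equivalent to the existence of $\tl y\in\C[x]$ with $y\cdot\tl y[-s_1]=\varphi^{\bm s}-\psi^{\bm s}$; such $\tl y$ is then unique and, by the degree count, $\deg\tl y=r-1-l$.

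For part (2), I would transport this factorization through the shift identities $\varphi^{\bm s}=\psi^{\tl{\bm s}}[-s_1]$ and $\psi^{\bm s}=\varphi^{\tl{\bm s}}[-s_1]$ recorded above, which give $\psi^{\tl{\bm s}}=\varphi^{\bm s}[s_1]$ and $\varphi^{\tl{\bm s}}=\psi^{\bm s}[s_1]$. Hence $\varphi^{\tl{\bm s}}-\psi^{\tl{\bm s}}=\psi^{\bm s}[s_1]-\varphi^{\bm s}[s_1]=-(\varphi^{\bm s}-\psi^{\bm s})[s_1]=-y[s_1]\,\tl y=\tl y\cdot(-y)[s_1]$, and since $-\tl s_1=-s_2=s_1$ this is exactly the identity \eqref{eq gl11 wronski} for the parity sequence $\tl{\bm s}$ with the pair $(\tl y,-y)$ in place of $(y,\tl y)$. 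As typicality of $\bm\lambda$ does not depend on the parity sequence, part (1) applied with $\bm s$ replaced by $\tl{\bm s}$ gives that $\tl y$ represents a solution of the Bethe ansatz equation associated to $\tl{\bm s}$, $\bm z$, $\bm\lambda$, and $\deg\tl y$; and comparing degrees in $y\cdot\tl y[-s_1]=\varphi^{\bm s}-\psi^{\bm s}$ yields $\deg\tl y=r-1-l$.

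The main obstacle is the very first reduction in part (1): one must match ``$(t_1,\dots,t_l)$ is a solution of the Bethe ansatz equation'' — with all the cancellation conventions of \eqref{eq cancel}, the requirement that surviving denominators not vanish, and the isotropic-root multiplicity condition of Section~\ref{sec super XXX bae} — to the clean statement $y\mid\varphi^{\bm s}-\psi^{\bm s}$. Once this dictionary is established, parts (1) and (2) follow by bookkeeping with the shifts $\tau^{\pm s_1}$, part (2) being a formal consequence of part (1) applied to the conjugate parity sequence.
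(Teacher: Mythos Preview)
Your argument is correct. The paper does not give a proof of this lemma at all (it is stated with a \qed\ at the end), but immediately before the statement it records exactly the ingredients you use---the reformulation $\varphi^{\bm s}(t_j)-\psi^{\bm s}(t_j)=0$, the shift identities $\varphi^{\bm s}=\psi^{\tl{\bm s}}[-s_1]$, $\psi^{\bm s}=\varphi^{\tl{\bm s}}[-s_1]$, and the isotropic-root multiplicity condition---so your proof is the intended one made explicit.
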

For each solution $y$, we can construct exactly one solution $\tl{y}$. We call this construction the \emph{$\gl_{1|1}$ reproduction procedure}.

The set $P_y$ consisting of $y$ and $\tl{y}$ is called the \emph{$\gl_{1|1}$ population originated at $y$}.

The weight at infinity associated to $\bm s, \bm \lambda$, and $l$ is $\lambda^{(\bs s,\infty)}_{[\bs s]}=(a-l,b+l)$. The weight at infinity associated to $\tl{\bs s}, \bs \lambda$ and $\tl{l}$ is $\tl{\lambda}^{(\tl{\bs s},\infty)}_{[\tl{\bm{s}}]}=(\tl{a}-\tl{l},\tl{b}+\tl{l})= (b+l+1,a-l-1)$. Thus we have $\lambda^{(\bs s,\infty)}=\tl{\lambda}^{(\tl{\bs s},\infty)}+\alpha^{\bs s}$.
In particular, in contrast to the case of $\gl_2$, both $y$ and $\tl{y}$ correspond to actual eigenvectors of the transfer matrix.

If $\bs \la$ is not typical, then all participating representations are one-dimensional, where the situation is trivial. In particular, we have $y(x)=1$. We do not discuss this case.

\subsection{Motivation for $\gl_{1|1}$ reproduction procedure}
Suppose $y$ and $\tl y$ are in the same $\gl_{1|1}$ population as in Section \ref{sec repro pro gl11}. Parallel to the $\gl_2$ reproduction procedure, we show that the eigenvalues of transfer matrix corresponding to the Bethe vectors obtained from polynomials $y$ and $\tl y$ coincide.

Let $y=\prod_{r=1}^l(x-t_r)$,  $\tl{y}=\prod_{r=1}^{\tl{l}}(x-\tl{t}_r)$. Let 
$\bm{t}=(t_1,\dots,t_l)$, $\tl{\bm{t}}=(\tl{t}_1,\dots,\tl{t}_{\tl{l}})$. By Theorem \ref{thm BR 08} and \eqref{glmn eigenvalue}, we have $\cT(x) w^{\bs s}(\bs t,\bs z)=\mc E(x)w^{\bs s}(\bs t,\bs z)$ and 
$\cT(x)w^{\tl{\bs s}}(\tl{\bs t},\bs z)=\tl{\mc E}(x)w^{\tl{\bs s}}(\tl{\bs t},\bs z)$, where
\be
\mc E(x)=s_1\frac{T_1^{\bm s}y[s_1]}{T_1^{\bm s}[s_1]y}+s_2\frac{T_2^{\bm s}y[-s_2]}{T_2^{\bm s}[s_2]y},\qquad 
\tl{\mc E}(x)=\tl s_1\frac{T_1^{\tl{\bm s}}y[\tl s_1]}{T_1^{\tl{\bm s}}[\tl s_1]y}+\tl s_2\frac{T_2^{\tl{\bm s}}y[-\tl s_2]}{T_2^{\tl{\bm s}}[\tl s_2]y}.
\ee

\begin{lem}\label{lem gl11 eigen}
The eigenvalues $\mc E(x)$ and $\tl{\mc E}(x)$ of transfer matrix are the same.
\end{lem}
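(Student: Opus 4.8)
The goal is to show $\mc E(x)=\tl{\mc E}(x)$, where both eigenvalues are expressed in terms of the Wronskian-type relation \eqref{eq gl11 wronski}, $y\cdot\tl y[-s_1]=\varphi^{\bm s}-\psi^{\bm s}$. The plan is to compute the difference $\tl{\mc E}(x)-\mc E(x)$ directly and show it vanishes, exactly in the spirit of the proof of Lemma \ref{lem eigen gl2}. First I would rewrite all four terms appearing in $\mc E(x)$ and $\tl{\mc E}(x)$ using the dictionary between the data for $\bm s$ and for $\tl{\bm s}=\bm s^{[1]}$: namely $\tl s_1=s_2$, $\tl s_2=s_1$, together with the relations $T_1^{\tl{\bm s}}=\psi^{\bm s}[\ast]$-type identities coming from $\varphi^{\tl{\bm s}}=\psi^{\bm s}[s_1]$ and $\psi^{\tl{\bm s}}=\varphi^{\bm s}[s_1]$ (the shifted coincidence of the BAEs noted just before Lemma \ref{lemma gl1|1 repro pro}), and $\tl l = r-1-l$. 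Concretely, $T_1^{\tl{\bm s}}$ and $T_2^{\tl{\bm s}}$ are, up to shift, built from the swapped exponents $\tl a_k=b_k+1$, $\tl b_k=a_k-1$, so $T_1^{\tl{\bm s}}$ is essentially $T_2^{\bm s}$ times a product of linear factors and $T_2^{\tl{\bm s}}$ is essentially $T_1^{\bm s}$ divided by such factors; these extra factors are precisely $\varphi^{\bm s}$ and $\psi^{\bm s}$ (or their shifts).

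The key computational step is then to substitute these into $\tl{\mc E}(x)$ so that it becomes an expression in $y$, $\tl y$, $T_1^{\bm s}$, $T_2^{\bm s}$, $\varphi^{\bm s}$, $\psi^{\bm s}$ only, and similarly for $\mc E(x)$. After clearing denominators — the common denominator will be $y\cdot\tl y$ up to shifts and up to the $T_i^{\bm s}[s_i]$ factors — the difference $\tl{\mc E}(x)-\mc E(x)$ should collapse to a single rational expression whose numerator is a combination of $y\cdot\tl y[-s_1]$ and $(\varphi^{\bm s}-\psi^{\bm s})$ with appropriate shifts. Invoking \eqref{eq gl11 wronski} (and its shift $y[s_1]\cdot\tl y = \varphi^{\bm s}[s_1]-\psi^{\bm s}[s_1]$, obtained by applying $\tau^{-s_1}$ to it) then forces the numerator to be identically zero. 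This is the genuine analog of how, in Lemma \ref{lem eigen gl2}, the identity $\Wr^+(y,\tl y)=T_1T_2^{-1}$ was used to kill the difference.

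The main obstacle I expect is bookkeeping of the shifts and signs: the parities $s_1=\pm1$ enter both as exponents $[\pm s_i]$ inside $\tau$ and as overall signs $s_a$ in front of each summand, and one must handle the two cases $\bm s=(1,-1)$ and $\bm s=(-1,1)$ uniformly (or check both). A clean way to avoid case analysis is to rewrite $\mc E_{\bm y}(x)$ in the factored form using the difference-operator language: define, as in the $\gl_2$ case, a rational difference operator
\[
\cD^{\bm s}(y)=\Big(1-\tfrac{T_1^{\bm s}\,y[s_1]}{T_1^{\bm s}[s_1]\,y}\,\tau\Big)^{s_1}\Big(1-\tfrac{T_2^{\bm s}\,y[-s_2]}{T_2^{\bm s}[s_2]\,y}\,\tau\Big)^{-s_1}\!\!,
\]
whose "supertrace-like" expansion reproduces $\mc E_{\bm y}(x)$, and then show $\cD^{\bm s}(y)=\cD^{\tl{\bm s}}(\tl y)$ using Lemma \ref{eq relations diff} applied to the $(1|1)$-rational difference operator, with $a,b,c,d$ read off from the coefficients above; relation \eqref{eq gl11 wronski} is exactly the condition in Lemma \ref{eq relations diff} translated through $\ln'$. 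Once the two operators are shown equal, the equality of eigenvalues is immediate since $\mc E(x)$ is recovered from $\cD^{\bm s}(y)$ in a parity-independent way. I would present the direct computation as the main line and remark on the difference-operator reformulation, since the latter also sets up Theorem \ref{thm diffoper inv} cited later.
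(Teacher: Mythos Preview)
Your plan is correct and would succeed, but it is more roundabout than the paper's argument. Rather than computing the difference $\tl{\mc E}(x)-\mc E(x)$ and chasing the $T_i^{\tl{\bm s}}$--$T_i^{\bm s}$ dictionary, the paper simplifies each eigenvalue \emph{separately} to the same closed form. The key observation you are missing is that the two summands in $\mc E(x)$ share the common factor $s_1\,y[s_1]/y$ (since $s_2=-s_1$ forces $y[-s_2]=y[s_1]$), and the remaining ratios $T_i^{\bm s}/T_i^{\bm s}[s_i]$ telescope to $\varphi^{\bm s}$ and $\psi^{\bm s}$ over the common denominator $\prod_{a_k+b_k\neq0}(x-z_k)$. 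Thus
\[
\mc E(x)=s_1\,\frac{y[s_1]}{y}\,(\varphi^{\bm s}-\psi^{\bm s})\prod_{a_k+b_k\neq0}(x-z_k)^{-1}
=s_1\,y[s_1]\,\tl y[-s_1]\prod_{a_k+b_k\neq0}(x-z_k)^{-1},
\]
the last step being exactly \eqref{eq gl11 wronski}. The same manipulation on $\tl{\mc E}(x)$, using $\varphi^{\tl{\bm s}}-\psi^{\tl{\bm s}}=-(\varphi^{\bm s}[s_1]-\psi^{\bm s}[s_1])$ and the shift of \eqref{eq gl11 wronski}, yields the identical expression. So no difference computation and no $T^{\tl{\bm s}}$ bookkeeping are needed.

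One caution on your alternative route: the claim that ``once $\cR^{\bm s}(y)=\cR^{\tl{\bm s}}(\tl y)$ the equality of eigenvalues is immediate'' is an overstatement. Extracting $\mc E(x)$ from the $(1|1)$-rational operator is not literally a trace, and one still has to check (via Lemma~\ref{eq relations diff}) that $d-c=a-b$ in the notation there; this is a short computation, but it is a computation. Indeed the paper treats the operator invariance (Lemma~\ref{lem 4.5}, and later Theorem~\ref{thm diffoper inv}) and the eigenvalue invariance as logically separate statements.
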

\begin{proof}
By \eqref{eq gl11 wronski}, we have
\begin{align*}
\mc E(x)=s_1\frac{y[s_1]}{y}(\varphi^{\bm s}-\psi^{\bm s})\prod_{\substack{k=1 \\ a_k+b_k\ne 0}}^p(x-z_k)^{-1}=s_1y[s_1]\tl y[-s_1]\prod_{\substack{k=1 \\ a_k+b_k\ne 0}}^p(x-z_k)^{-1},
\end{align*}
and
\begin{align*}
\tl{\mc E}(x)= s_1\frac{\tl y[-s_1]}{\tl y}(\varphi^{\bm s}[s_1]-\psi^{\bm s}[s_1])\prod_{\substack{k=1 \\ a_k+b_k\ne 0}}^p(x-z_k)^{-1}
= s_1y[s_1]\tl y[-s_1]\prod_{\substack{k=1 \\ a_k+b_k\ne 0}}^p(x-z_k)^{-1}.
\end{align*}
Therefore the lemma follows.
\end{proof}

Define a rational difference operator:
$$
\cR^{\bs s}(y)=\Big(1-\frac{T_1^{\bm s}y[s_1]}{T_1^{\bm s}[s_1]y} \,\tau\Big)^{s_1}\Big(1-\frac{T_2^{\bm s}y[-s_2]}{T_2^{\bm s}[s_2]y}\,\tau\Big)^{s_2}.
$$	
It is clear that $\cR^{\bs s}(y)=1$ if $\bs\la$ is not typical.

We have the following lemma.

\begin{lem}\label{lem 4.5}
If $\bs\la$ is typical, then $\cR^{\bs s}(y)$ is a $(1|1)$-rational difference operator. Moreover, this $(1|1)$-rational difference operator is independent of a choice of a polynomial in a population, $\cR^{\bs s}(y)=\cR^{\tl{\bs s}}(\tl{y})$. 
\end{lem}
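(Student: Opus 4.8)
The plan is to treat the two parity sequences $\bm s=(1,-1)$ and $\bm s=(-1,1)$ in parallel (so $s_2=-s_1$ throughout) and to begin by bringing $\cR^{\bm s}(y)$ to a transparent form. From the product formula \eqref{eq functions T} one checks the telescoping identities $T_1^{\bm s}/T_1^{\bm s}[s_1]=\varphi^{\bm s}/Z$ and $T_2^{\bm s}/T_2^{\bm s}[s_2]=\psi^{\bm s}/Z$, where $Z=\prod_{k:\,a_k+b_k\neq0}(x-z_k)$. Since $-s_2=s_1$, both $\tau$-coefficients in $\cR^{\bm s}(y)$ then carry the same factor $y[s_1]/y$, so that with $u:=y[s_1]/(Zy)\in\bK$ one obtains
\[
\cR^{\bm s}(y)=(1-\varphi^{\bm s}u\,\tau)^{s_1}(1-\psi^{\bm s}u\,\tau)^{s_2}.
\]
Moreover $(\varphi^{\bm s}u)-(\psi^{\bm s}u)=u(\varphi^{\bm s}-\psi^{\bm s})$, and typicality of $\bm\lambda$ forces $\varphi^{\bm s}-\psi^{\bm s}$ to be a nonzero polynomial (its top coefficient involves $h\sum_{k:\,a_k+b_k\neq0}(a_k+b_k)\neq0$), so $\varphi^{\bm s}u\neq\psi^{\bm s}u$.

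For the independence statement I would feed in the reproduction identity of Lemma \ref{lemma gl1|1 repro pro}, $y\cdot\tilde y[-s_1]=\varphi^{\bm s}-\psi^{\bm s}$, giving $\varphi^{\bm s}u-\psi^{\bm s}u=y[s_1]\tilde y[-s_1]/Z$. Take $\bm s=(1,-1)$, so $\cR^{\bm s}(y)=(1-\varphi^{\bm s}u\tau)(1-\psi^{\bm s}u\tau)^{-1}$; Lemma \ref{eq relations diff} rewrites this as $(1-c\tau)^{-1}(1-d\tau)$ with $c=(\psi^{\bm s}u)[1]\,\ln'(\varphi^{\bm s}u-\psi^{\bm s}u)$ and $d=(\varphi^{\bm s}u)[1]\,\ln'(\varphi^{\bm s}u-\psi^{\bm s}u)$. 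A direct simplification (using $\tilde Z=Z$, which follows from $\tilde a_k+\tilde b_k=a_k+b_k$, and the relations $\varphi^{\tilde{\bm s}}=\psi^{\bm s}[s_1]$, $\psi^{\tilde{\bm s}}=\varphi^{\bm s}[s_1]$ recorded before Lemma \ref{lemma gl1|1 repro pro}) turns these into $c=\varphi^{\tilde{\bm s}}\tilde u$ and $d=\psi^{\tilde{\bm s}}\tilde u$, where $\tilde u=\tilde y[\tilde s_1]/(\tilde Z\tilde y)$; comparing with the form of $\cR^{\tilde{\bm s}}(\tilde y)$ from the first paragraph gives $\cR^{\bm s}(y)=\cR^{\tilde{\bm s}}(\tilde y)$. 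The case $\bm s=(-1,1)$ then follows by applying this to the pair $(\tilde{\bm s},\tilde y)$: two applications of Lemma \ref{lemma gl1|1 repro pro} yield $\tilde{\tilde y}=-y$ and $\tilde{\tilde{\bm s}}=\bm s$, and $\cR^{\bm s}$ depends on $y$ only up to a nonzero scalar.

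To prove that $\cR^{\bm s}(y)$ is a $(1|1)$-rational difference operator, I would read off from the form above a fractional factorization $\cR^{\bm s}(y)=\cD_{\bar 0}\cD_{\bar 1}^{-1}$ (or $\cD_{\bar 1}^{-1}\cD_{\bar 0}$) with $\cD_{\bar 0}=1-\varphi^{\bm s}u\tau$, $\cD_{\bar 1}=1-\psi^{\bm s}u\tau$ of order $1$ and constant term $1$. Since $\varphi^{\bm s}u\neq\psi^{\bm s}u$, one has $\cR^{\bm s}(y)\notin\bK[\tau]$, so by Proposition \ref{prop rdp}(ii) its minimal fractional factorization has denominator of order exactly $1$. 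To invoke Proposition \ref{prop rdp}(iii) one must see that $\cD_{\bar 0}$ and $\cD_{\bar 1}$ have $1$-dimensional kernels in $\bK$, which is the substantive point: each $\tau$-coefficient is a discrete logarithmic derivative of an explicit rational function, e.g. for $s_1=1$ one has $\varphi^{\bm s}u=\ln'(T_1^{\bm s}/y)$ and $\psi^{\bm s}u=\ln'\big(1/(T_2^{\bm s}[-1]y)\big)$, so $\ker\cD_{\bar 0}=\C\,T_1^{\bm s}/y$ and $\ker\cD_{\bar 1}=\C/(T_2^{\bm s}[-1]y)$. These two lines are distinct because their ratio $T_1^{\bm s}\,T_2^{\bm s}[-1]$ is nonconstant (again by typicality, $a+b>0$), whence $\ker\cD_{\bar 0}\cap\ker\cD_{\bar 1}=0$; Proposition \ref{prop rdp}(iii) identifies the displayed factorization (after rescaling $\cD_{\bar 1}$ to be monic, which preserves both orders $=1$ and keeps the two constant terms equal) as the minimal one, and complete factorability is automatic in order $1$.

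The main obstacle is not the identities themselves — they come down to telescoping products and the cancellation mechanism of Lemma \ref{eq relations diff} — but the bookkeeping: keeping the shifts $[\pm1]$ and the parity signs $s_1,s_2$ consistent across the two cases, and, more essentially, verifying that the order-$1$ numerator and denominator of the fractional factorization have full-rank kernels over $\bK$ so that Proposition \ref{prop rdp}(iii) applies, which is exactly the place where the discrete logarithmic derivative computations of Section \ref{sec rational diff oper study} are indispensable.
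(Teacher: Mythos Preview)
Your proposal is correct and follows the same approach as the paper's proof, which simply reads ``direct computation using Lemma~\ref{eq relations diff} and \eqref{eq gl11 wronski}.'' You have carried out precisely that computation: the telescoping identities for $T_i^{\bm s}/T_i^{\bm s}[s_i]$, the swap via Lemma~\ref{eq relations diff}, and the substitution of \eqref{eq gl11 wronski} are exactly what the paper intends, and your additional verification that the order-$1$ factors have one-dimensional disjoint kernels (so that Proposition~\ref{prop rdp} applies and the factorization is minimal) fills in the part of the $(1|1)$-rational claim that the paper leaves implicit.
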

\begin{proof}
The lemma is proved by a direct computation using Lemma \ref{eq relations diff} and \eqref{eq gl11 wronski}.
\end{proof}

\section{Reproduction procedure for $\gl_{m|n}$}\label{sec glmn rep pro}
We define the reproduction procedure and the populations in the general case.

\subsection{Reproduction procedure}\label{subsec glmn rep pro}
Let $\bm{s}\in S_{m|n}$ be a parity sequence. Let $\bm{\lambda}=(\lambda^{(1)},\dots,\lambda^{(p)})$ be a sequence of polynomial $\gl_{m|n}$ weights. Let $\bm{z}=(z_1,\dots,z_p)$ be an $h$-generic sequence of complex numbers. Let $\bm{T}^{\bm{s}}$ be a sequence of polynomials associated to $\bm{s}$, $\bm{\lambda}$, and $\bm{z}$, see \eqref{eq functions T}. 
If $s_i\ne s_{i+1}$, we also set
\[
\varphi_i^{\bm s}=\prod_{\substack{k=1 \\ \la_{i}^{(k,\bm s)}+\la_{i+1}^{(k,\bm s)}\ne 0}}^p(x-z_k+s_i\la_{i}^{(k,\bm s)}h),\quad\psi_i^{\bm s}=\prod_{\substack{k=1 \\ \la_{i}^{(k,\bm s)}+\la_{i+1}^{(k,\bm s)}\ne 0}}^p(x-z_k+s_{i+1}\la_{i+1}^{(k,\bm s)}h).
\]
Let $\bm{l}=(l_1,\dots,l_{m+n-1})$ be a sequence of non-negative integers.

For $i\in \{1,\dots, m+n-1\}$, set $\bm{s}^{[i]}=(s_1,\dots,s_{i+1},s_{i},\dots,s_{m+n})$.
Set $y_0=y_{m+n}=1$.

For $g_1,g_2\in \bK$, we also use the notation
\[
\Wr^{s_i}(g_1,g_2)=g_1g_2[-s_i]-g_2g_1[-s_i].
\]

We now reformulate the BAE \eqref{eq BAE XXX} which allows us to construct a family of new solutions. 

\begin{thm}\label{thm repro pro glmn}  
Let $\bm{y}=(y_1,\dots,y_{m+n-1})$ be a sequence of polynomials generic with respect to $\bm{s}$, $\bm{\lambda}$, and $\bm{z}$, such that $\deg y_k=l_k$, $k=1,\dots,m+n-1$.
\begin{enumerate}
\item The sequence $\bm{y}$ represents a solution of the BAE \eqref{eq BAE XXX} associated to $\bm{s}$, $\bm{z}$, $\bm{\lambda}$, and $\bm{l}$, if and only if for each $i=1,\dots,m+n-1$, there exists a polynomial $\tl{y}_i$, such that
\begin{align}
&\Wr^{s_i}\left(y_i,\tl{y}_i\right)=T^{\bm{s}}_i\left(T^{\bm{s}}_{i+1}\right)^{-1}y_{i-1}[-s_i]y_{i+1}, &\mbox{if}\;s_i=s_{i+1},\label{bosonic rp}\\
&y_i\,\tl{y}_i[-s_i]=\varphi_i^{\bm s}y_{i-1}[-s_i]y_{i+1}-\psi_{i}^{\bm s}y_{i-1}y_{i+1}[-s_i], &\mbox{if}\;s_i\neq s_{i+1}.\label{fermionic rp}
\end{align}
\item Let $i\in \{1,\dots, m+n-1\}$ be such that $\tl y_i\neq 0$. If $\bm{y}^{[i]}=(y_1,\dots,\tl{y}_i,\dots,y_{m+n-1})$ 
is generic with respect to $\bm{s}^{[i]}$, $\bm{\lambda}$, and $\bm{z}$, then $\bm{y}^{[i]}$ represents a solution of the BAE associated to $\bm{s}^{[i]}$, $\bm{\lambda}$, $\bm{z}$, and $\bm{l}^{[i]}$, where
$\bm{l}^{[i]}=(l_1,\dots,\tl{l}_i,\dots,l_{m+n-1})$, $\tl{l}_i=\deg \tl{y}_i$.
\end{enumerate}
\end{thm}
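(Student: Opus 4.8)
The plan is to deduce everything from the two rank-one reproduction procedures already recorded: the $\gl_2$ one of Lemma~\ref{lem gl2 populations} and the $\gl_{1|1}$ one of Lemma~\ref{lemma gl1|1 repro pro}. Part~(1) follows from a color-by-color reduction to these, and part~(2) from combining them with discrete Wronskian identities of Pl\"ucker type.

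\emph{Part (1).} Fix a color $i\in\{1,\dots,m+n-1\}$ and freeze all the variables $t_b^{(a)}$ with $a\neq i$. The equations \eqref{eq BAE XXX} related to $t_1^{(i)},\dots,t_{l_i}^{(i)}$ then involve only the color-$i$ variables, so \eqref{eq BAE XXX} as a whole is the conjunction of these $m+n-1$ subsystems and it is enough to treat each subsystem separately. I would rewrite the $i$-th subsystem in terms of $y_{i-1},y_i,y_{i+1},T_i^{\bm s},T_{i+1}^{\bm s}$ using \eqref{y fun} and \eqref{eq functions T}; the genericity conditions (1)--(3) are tailored precisely so that $y_i$ is generic with respect to the resulting ``effective'' rank-one data, whence clearing denominators is reversible. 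If $s_i=s_{i+1}$, the middle product in \eqref{eq BAE XXX} has $\gl_2$ shape, and, after absorbing $y_{i-1}[-s_i]$, $y_{i+1}$ and the source term into an effective pair of polynomials, the subsystem becomes a $\gl_2$ Bethe ansatz equation; the Wronskian reformulation of \cite{MV03} (valid for an arbitrary polynomial source) then gives the equivalence with \eqref{bosonic rp}, as in Lemma~\ref{lem gl2 populations}. If $s_i\neq s_{i+1}$, the cancellation \eqref{eq cancel} kills the middle product, and the subsystem together with the multiplicity restriction imposed in Section~\ref{sec super XXX bae} says exactly that $y_i$ divides $\varphi_i^{\bm s}y_{i-1}[-s_i]y_{i+1}-\psi_i^{\bm s}y_{i-1}y_{i+1}[-s_i]$, i.e.\ that a polynomial $\tl y_i$ as in \eqref{fermionic rp} exists, exactly as in Lemma~\ref{lemma gl1|1 repro pro}. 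Conjoining over $i$ proves part~(1).

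\emph{Part (2).} First I record how the data transforms under $\bm s\mapsto\bm s^{[i]}$: one has $T_j^{\bm s^{[i]}}=T_j^{\bm s}$ for $j\notin\{i,i+1\}$, while $T_i^{\bm s^{[i]}},T_{i+1}^{\bm s^{[i]}}$ and (when $s_i\neq s_{i+1}$) the polynomials $\varphi_i^{\bm s},\psi_i^{\bm s}$ are obtained from those for $\bm s$ by the swap, respectively the shift, already computed in Sections~\ref{sec repro pro gl11} and \ref{subsec glmn rep pro}. Applying part~(1) with the parity $\bm s^{[i]}$ to $\bm y^{[i]}=(y_1,\dots,\tl y_i,\dots,y_{m+n-1})$ (which is generic for $\bm s^{[i]}$ by hypothesis), it suffices to exhibit, for every color $j$, a polynomial witnessing \eqref{bosonic rp}/\eqref{fermionic rp} for $\bm s^{[i]}$. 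For $j\notin\{i-1,i,i+1\}$ the identity is literally unchanged and the old witness $\tl y_j$ serves. For $j=i$ the $\bm s^{[i]}$-identity is the $\bm s$-identity with $y_i$ and $\tl y_i$ interchanged, so $y_i$ is the witness; here I use part~(2) of Lemma~\ref{lem gl2 populations}, respectively Lemma~\ref{lemma gl1|1 repro pro}. The colors $j=i-1$ and $j=i+1$ are the substantive ones: their equation contains the factor $y_i$, which must be traded for $\tl y_i$, and when $s_i\neq s_{i+1}$ even the type (bosonic versus fermionic) of these equations flips. For each of these two colors I would construct the new witness $z_j$ from the old witness $\tl y_j$ and the ambient polynomials, and verify the required identity via a bilinear discrete Wronskian identity --- of the same kind as the one used in the proof of Proposition~\ref{prop flag factor}, cf.\ \cite[Lemma 9.5]{MV03} --- that replaces $y_i$ by $\tl y_i$ using \eqref{bosonic rp}/\eqref{fermionic rp} for color $i$. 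The degree count $\tl l_i=\deg\tl y_i$ is immediate from the construction.

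The step I expect to be the main obstacle is exactly this last one: establishing, in each parity configuration of $(s_{i-1},s_i,s_{i+1},s_{i+2})$, the Wronskian identity that exchanges $y_i$ for $\tl y_i$ in the color-$(i\pm1)$ equation while yielding an honest polynomial witness $z_{i\pm1}$. All the other steps are either unwinding of definitions or invocations of the two rank-one lemmas.
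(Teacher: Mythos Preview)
Your Part~(1) is exactly the paper's argument: reduce color-by-color to Lemma~\ref{lem gl2 populations} and Lemma~\ref{lemma gl1|1 repro pro}.

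For Part~(2) your strategy diverges from the paper at the crucial step, and the divergence is worth noting. You propose to use the converse direction of Part~(1) for $\bm s^{[i]}$ by producing, for each color $j$, a polynomial witness to \eqref{bosonic rp}/\eqref{fermionic rp}. For $|j-i|>1$ and $j=i$ this is fine and matches the paper. But for $j=i\pm1$ the paper does \emph{not} construct a new witness $z_{i\pm1}$; instead it verifies the BAE \eqref{eq BAE XXX} for $\bm y^{[i]}$ directly at each root $t_j^{(i\pm1)}$. The point is that substituting $x=t_j^{(i-1)}-s_ih$ (resp.\ $x=t_j^{(i+1)}$) into the reproduction relation \eqref{bosonic rp} or \eqref{fermionic rp} for color $i$ yields precisely the ratio identity needed to convert the $(i\pm1)$-th BAE for $\bm y$ into the $(i\pm1)$-th BAE for $\bm y^{[i]}$. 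In the fermionic case one also substitutes into the relation for color $i\pm1$ and combines with a one-line identity between the $\varphi,\psi$ polynomials. This is a short pointwise computation, done once per parity pattern of $(s_{i-1},s_i,s_{i+1},s_{i+2})$.

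Your proposed route via a Pl\"ucker-type Wronskian identity is not obviously available: to manufacture a polynomial $z_{i-1}$ with $\Wr^{s_{i-1}}(y_{i-1},z_{i-1})$ equal to the right-hand side of \eqref{bosonic rp} with $\tilde y_i$ in place of $y_i$, you would need to solve a first-order inhomogeneous difference equation, and the existence of a \emph{polynomial} solution is, by Lemma~\ref{lem gl2 populations}, equivalent to the BAE you are trying to prove. So without an independent construction the argument threatens to be circular; and an independent construction (say, from the old $\tilde y_{i-1}$ by a bilinear identity) does not fall out of \cite[Lemma~9.5]{MV03} in any evident way, since the right-hand side you need is $\tilde y_i$ itself, not a Wronskian. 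The paper's direct verification neatly avoids this issue and is the simpler path.
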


\begin{proof} 
Part (i) follows from Lemmas \ref{lem gl2 populations} and \ref{lemma gl1|1 repro pro}.

Now we consider Part (ii). Let $y_r=\prod_{j=1}^{l_r}(x-t^{(r)}_j)$ and $\tl{y}_r= \prod_{j=1}^{\tl{l}_r}(x-\tl{t}^{\;(r)}_j)$, $r=1,\dots,m+n-1$. Let $\bs t=(t_j^{(r)})_{r=1,\dots, m+n-1}^{j=1,\dots,l_r}$ and $\tl{\bs t}=(\tl {t}_j^{\;(r)})_{r=1,\dots, m+n-1}^{j=1,\dots,\tl{l}_r}$, where we set $l_r=\tl{l}_r, t_j^{(r)}=\tl {t}_j^{\;(r)}$ if $r\neq i$. 

The sequence ${\bs t}$ satisfies the BAE associated to $\bm{s}$, $\bm{\lambda}$, $\bm{z}$, and $\bm{l}$. We prove that $\tl{\bs t}$ satisfies the BAE associated to $\bm{s}^{[i]}$, $\bm{\lambda}$, $\bm{z}$, and $\bm{l}^{[i]}$. Clearly, the BAEs for $\tl {\bs t}$ and ${\bs t}$ related to $t^{(r)}_j$ with $|r-i|>1$ are the same. On the other hand, the BAE for $\tl{\bs t}$ related to $\tl{t}^{\;(i)}_j$ holds by Lemmas \ref{lem gl2 populations}  and \ref{lemma gl1|1 repro pro}.  We only need to establish the BAE for $\tl{\bs t}$ related to $t^{(i-1)}_j$ and $t^{(i+1)}_j$. We have two main cases depending on the sign of $s_is_{i+1}$.

Suppose $s_{i}=s_{i+1}$. Dividing \eqref{bosonic rp} by $y_i[-s_i]\tl{y}_i[-s_i]$ and evaluating at $x=t^{(i-1)}_j-s_ih$ and $x=t^{(i+1)}_j$, we obtain 
\[
\prod_{a=1}^{l_i}\frac{t^{(i-1)}_j-t^{(i)}_a}{t^{(i-1)}_j-t^{(i)}_a-s_ih}=\prod_{a=1}^{\tl l_i}\frac{t^{(i-1)}_j-\tl t^{(i)}_a}{t^{(i-1)}_j-\tl t^{(i)}_a-s_ih}\, ,
\]
\[
\prod_{a=1}^{l_i}\frac{t^{(i+1)}_j-t^{(i)}_a+s_ih}{t^{(i+1)}_j-t^{(i)}_a}=\prod_{a=1}^{\tl l_i}\frac{t^{(i+1)}_j-\tl t^{(i)}_a+s_ih}{t^{(i+1)}_j-\tl t^{(i)}_a}\, .
\]
Thus, the BAE for $\tl{\bs t}$ related to $t^{(i\pm1)}_j$ follows from the BAE for $\bs t$ related to $t^{(i\pm1)}_j$.

If $s_{i}=-s_{i+1}$, then the argument depends on $s_{i-1}$, $s_{i+2}$. Here we only treat the case of $s_{i-1}=-s_{i}$. All other cases are similar, we omit further details. 

We prove the BAE for $\tl{\bs t}$ related to $t^{(i-1)}_j$, which has the form
\beq\label{i-1 bae}
\frac{\varphi_{i-1}^{\bm{s}^{[i]}}(t_j^{(i-1)})}{\psi_{i-1}^{\bm{s}^{[i]}}(t_j^{(i-1)})}\cdot\frac{y_{i-2}(t_j^{(i-1)}+s_{i-1}h)}{y_{i-2}(t_j^{(i-1)})}\cdot\frac{y_{i-1}(t_j^{(i-1)}-s_{i-1}h)}{y_{i-1}(t_j^{(i-1)}+s_{i+1}h)}\cdot\frac{\tl y_i(t_j^{(i-1)})}{\tl y_i(t_j^{(i-1)}+s_ih)}=-1\, .
\eeq
Substituting $x=t_j^{(i-1)}-s_ih$ and $x=t_j^{(i-1)}$ to \eqref{fermionic rp} and dividing, we get
\beq\label{exp 1}
\frac{\tl y_i(t_j^{(i-1)})}{\tl y_i(t_j^{(i-1)}+s_ih)}=-\frac{\psi_i^{\bm s}(t_j^{(i-1)}-s_ih)y_{i-1}(t_j^{(i-1)}+s_{i+1}h)y_i(t_j^{(i-1)})}{\varphi_i^{\bm s}(t_j^{(i-1)})y_{i-1}(t_j^{(i-1)}-s_{i-1}h)y_i(t_j^{(i-1)}-s_ih)}\, .
\eeq
Changing $i$ in \eqref{fermionic rp} to $i-1$ (recall $s_{i-1}=-s_{i}$) and substituting $x=t_j^{(i-1)}$, we have
\beq\label{exp 2}
\frac{\varphi_{i-1}^{\bm s}(t_j^{(i-1)})y_{i-2}(t_j^{(i-1)}+s_{i-1}h)y_i(t_j^{(i-1)})}{\psi_{i-1}^{\bm s}(t_j^{(i-1)})y_{i-2}(t_j^{(i-1)})y_i(t_j^{(i-1)}-s_ih)}=1\, .
\eeq
Equation \eqref{i-1 bae} follows from \eqref{exp 1}, \eqref{exp 2}, and the equality
\[
\frac{\varphi_{i-1}^{\bm{s}^{[i]}}(t_j^{(i-1)})}{\psi_{i-1}^{\bm{s}^{[i]}}(t_j^{(i-1)})}=\frac{\varphi_{i-1}^{\bm s}(t_j^{(i-1)})\varphi_i^{\bm s}(t_j^{(i-1)})}{\psi_{i-1}^{\bm s}(t_j^{(i-1)})\psi_i^{\bm s}(t_j^{(i-1)}-s_ih)}\, .\qedhere
\]
\end{proof}
\begin{rem}
Suppose $s_i\ne s_{i+1}$. It is not hard to see that if $\varphi_i^{\bm s}y_{i-1}[-s_i]y_{i+1}$ and $\psi_{i}^{\bm s}y_{i-1}y_{i+1}[-s_i]$ in \eqref{fermionic rp} have common roots, then $\bm{y}^{[i]}$ 
is not generic with respect to $\bm{s}^{[i]}$, $\bm{\lambda}$, and $\bm{z}$.
\end{rem}

If $s_i=s_{i+1}$, then starting from a solution of the BAE
we construct a family of new solutions represented by sequences $\bm{y}^{[i]}$. Here we use \eqref{bosonic rp} and the parity sequence remains unchanged. We call this construction the \emph{bosonic reproduction procedure in $i$-th direction}. 

If $\varphi_i^{\bm s}y_{i-1}[-s_i]y_{i+1}\ne\psi_{i}^{\bm s}y_{i-1}y_{i+1}[-s_i]$, 
then starting from a solution of the BAE we construct a single new solution represented by $\bm{y}^{[i]}$. We use \eqref{fermionic rp} and the parity sequence changes from $\bs s$ to $\bs s^{[i]}$. We call this construction the \emph{fermionic reproduction procedure in $i$-th direction}. 

From the very definition of the fermionic reproduction procedure, $(\bm{y}^{[i]})^{[i]}=\bm{y}$. 

If $\bm{y}^{[i]}$ is generic with respect to $\bm{s}^{[i]}$, $\bm{\lambda}$, and $\bm{z}$, then by Theorem \ref{thm repro pro glmn} we can apply the reproduction procedure again.

Let 
\be 
P_{(\bm{y},\bs{s} )}\subset \big(\mathbb{P}(\C[x])\big)^{m+n-1}\times S_{m|n}
\ee 
be the closure of the set of all pairs $(\tilde{\bs y}, \tilde{\bs s})$ obtained from the initial pair $(\bs y,\bs s)$ by repeatedly applying all possible reproductions. We call $P_{(\bs y,\bs s)}$ the \emph{$\gl_{m|n}$ population of solutions of the BAE associated to $\bm{s}$, $\bm{z}$, and $\bm{\lambda}$ , originated at $\bm{y}$}.
By definition, $P_{(\bs y,\bs s)}$ is a disjoint union over parity sequences, 
\[P_{(\bs y,\bs s)}=\bigsqcup_{\tl{\bm{s}}\in S_{m|n}}P^{\tilde{\bs s}}_{(\bs y,\bs s)},  \qquad
P^{\tl{\bs s}}_{(\bs y,\bs s)}  = P_{(\bm{y},\bs{s} )} \cap \left(\big(\mathbb{P}(\C[x])\big)^{m+n-1}\times \{\tl{\bs s}\}\right). \]  

\subsection{Rational difference operator associated to population}
We define a rational difference operator  which does not change under the reproduction procedure.

Let $\bm{s}\in S_{m|n}$ be a parity sequence. Let $\bm{z}=(z_1,\dots,z_p)$ be an $h$-generic sequence of complex numbers. Let $\bm{\lambda}=(\lambda^{(1)},\dots,\lambda^{(p)})$ be a sequence of polynomial $\gl_{m|n}$ weights. The sequence $\bm{T}^{\bm{s}}=(T^{\bm{s}}_1,\dots,T^{\bm{s}}_{m+n})$ is given by \eqref{eq functions T}.

Let $\bm{y}=(y_1,\dots,y_{m+n-1})$ be a sequence of polynomials. Recall our convention that $y_0=y_{m+n}=1$. Define a rational difference operator $\cR^{\bm{s}}(\bs y)$ over $\bK=\C(x)$, 
\begin{equation}\label{eq rational diff oper glmn}
\cR^{\bm{s}}(\bs y)=\mathop{\overrightarrow\prod}\limits_{1\lle i\lle m+n}\Big(1-\frac{T_i^{\bm s}y_{i-1}[-s_i]y_i[s_i]}{T_i^{\bm s}[s_i]y_{i-1}y_i}\, \tau\Big)^{s_i}.
\end{equation}

The following theorem is the main result of this section. 
\begin{thm}\label{thm diffoper inv}
Let $P$ be a $\gl_{m|n}$ population. Then the rational difference operator $\cR^{\bm{s}}(\bs y)$ does not depend on the choice of ${\bs y}$ in $P$.
\end{thm}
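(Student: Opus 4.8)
The plan is to reduce the general statement to the rank-one cases already established in Lemmas \ref{lem eigen gl2} (via the operator $\cD(y)$) and \ref{lem 4.5}. Since any element $(\tilde{\bs y},\tilde{\bs s})$ of the population $P$ is obtained from $(\bs y,\bs s)$ by a finite sequence of bosonic and fermionic reproductions in various directions $i$, and since the population is connected by construction, it suffices to show that a single reproduction step in direction $i$ — either bosonic (keeping $\bs s$) or fermionic (changing $\bs s$ to $\bs s^{[i]}$) — leaves $\cR^{\bm s}(\bs y)$ unchanged, i.e. $\cR^{\bm s}(\bs y)=\cR^{\bm s^{[i]}}(\bs y^{[i]})$ (with $\bs s^{[i]}=\bs s$ in the bosonic case). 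Because the factors in the product \eqref{eq rational diff oper glmn} for indices $j\neq i,i+1$ involve only $y_{j-1},y_j,y_{j+1}$ and $T_j^{\bm s}$, and reproduction in direction $i$ changes only $y_i$ and (for the fermionic case) swaps $s_i\leftrightarrow s_{i+1}$, all such factors with $j\notin\{i,i+1\}$ are literally the same on both sides. So the whole problem collapses to verifying the identity of the two-factor products
\[
\Big(1-\tfrac{T_i^{\bm s}y_{i-1}[-s_i]y_i[s_i]}{T_i^{\bm s}[s_i]y_{i-1}y_i}\tau\Big)^{s_i}\Big(1-\tfrac{T_{i+1}^{\bm s}y_{i}[-s_{i+1}]y_{i+1}[s_{i+1}]}{T_{i+1}^{\bm s}[s_{i+1}]y_{i}y_{i+1}}\tau\Big)^{s_{i+1}}
\]
before and after the substitution $y_i\mapsto\tilde y_i$, $(s_i,s_{i+1})\mapsto(s_{i+1},s_i)$.

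The execution splits according to the sign of $s_is_{i+1}$. If $s_i=s_{i+1}$ (bosonic case), then after multiplying/dividing the inner factors by the fixed quantities involving $y_{i-1}[-s_i]y_{i+1}$ and $T_i^{\bm s}/T_{i+1}^{\bm s}$, the two-factor product becomes, up to conjugation by a fixed difference operator, exactly the operator $\cD(y_i)$ from the $\gl_2$ subsection applied to the effective ``$T_1,T_2$'' given by $T_i^{\bm s}(T_{i+1}^{\bm s})^{-1}y_{i-1}[-s_i]y_{i+1}$-type data; invariance then follows from the fact, recorded after Lemma \ref{lem eigen gl2}, that $\cD(y)=\cD(\tilde y)$ whenever $\Wr^+(y,\tilde y)$ is the prescribed product, which is precisely \eqref{bosonic rp}. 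If $s_i=-s_{i+1}$ (fermionic case), the two-factor product is a $(1|1)$-rational difference operator of the shape $\cR^{\bm s}(y)$ in Section \ref{sec repro pro gl11}, again with $T_1^{\bm s},T_2^{\bm s},\varphi^{\bm s},\psi^{\bm s}$ replaced by the appropriate ``dressed'' polynomials incorporating the fixed neighbours $y_{i-1},y_{i+1}$ and their shifts — here $\varphi_i^{\bm s}y_{i-1}[-s_i]y_{i+1}$ and $\psi_i^{\bm s}y_{i-1}y_{i+1}[-s_i]$ play the roles of $\varphi^{\bm s},\psi^{\bm s}$ — so that \eqref{fermionic rp} is exactly the hypothesis $y\tilde y[-s_1]=\varphi^{\bm s}-\psi^{\bm s}$ of Lemma \ref{lem 4.5}, and the invariance $\cR^{\bm s}(y)=\cR^{\tilde{\bm s}}(\tilde y)$ from that lemma (which in turn rests on Lemma \ref{eq relations diff}) gives the claim. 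In both cases one should also invoke Proposition \ref{prop rdp}(iii) together with Lemma \ref{lem empty cap}-type genericity to know the relevant fractional factorization is minimal, so that the equality of the displayed two-factor forms really does force equality of the rational difference operators.

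I would organize the write-up as: (1) state the reduction to a single reproduction step via connectedness of $P$ and the locality of the factors; (2) observe cancellation of the $j\notin\{i,i+1\}$ factors; (3) handle the bosonic step by matching with $\cD(y)$ and citing Lemma \ref{lem eigen gl2} / \eqref{bosonic rp}; (4) handle the fermionic step by matching with $\cR^{\bm s}(y)$ and citing Lemma \ref{lem 4.5} / \eqref{fermionic rp}. The main obstacle I anticipate is purely bookkeeping: carefully tracking the shifts $[\pm s_i]$, $[\pm s_{i+1}]$ and the ``dressing'' by $y_{i-1},y_{i+1}$ so that the effective two-factor operator is genuinely in the exact normalized form to which Lemmas \ref{lem eigen gl2} and \ref{lem 4.5} apply — in particular making sure the neighbouring factors at indices $i-1$ and $i+2$ (which do contain $y_i$ or $\tilde y_i$ through their $y_{j\pm 1}$ slots) are unaffected, since those involve $y_i$ only via $y_{(i-1)+1}=y_i$ and $y_{(i+2)-1}=y_{i+1}$; one must double-check that the factor at index $i-1$ uses $y_i$ in a way that is left invariant, which it is, because reproduction replaces the triple $(y_{i-1},y_i,y_{i+1})$'s middle entry while the index-$(i-1)$ and index-$(i+1)$ factors only see $y_i$ through the combination that is held fixed by \eqref{bosonic rp}/\eqref{fermionic rp}. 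Once the shifts are lined up, the algebraic heart is entirely contained in the already-proven rank-one lemmas, so no genuinely new computation is needed.
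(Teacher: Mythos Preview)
Your overall strategy matches the paper's: reduce to a single reproduction step, observe that only the factors at positions $i$ and $i+1$ change, and then verify the two-factor identity case by case, with the bosonic case paralleling Lemma \ref{lem eigen gl2} and the fermionic case paralleling Lemma \ref{lem 4.5} via Lemma \ref{eq relations diff} and \eqref{fermionic rp}.

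Two corrections, however. First, your bookkeeping worry about the factors at indices $i-1$ and $i+2$ is misplaced: the $j$-th factor in \eqref{eq rational diff oper glmn} involves only $y_{j-1}$ and $y_j$, so the factor at $i-1$ uses $y_{i-2},y_{i-1}$ and the factor at $i+2$ uses $y_{i+1},y_{i+2}$ --- neither sees $y_i$ at all. There is nothing to check there. Second, the appeal to Proposition \ref{prop rdp}(iii) and Lemma \ref{lem empty cap} is unnecessary and misleading: you are proving an identity between two explicit elements of the division ring $\bK(\tau)$, and equality of the two-factor blocks (together with literal equality of the remaining factors) already forces equality of the full products. Minimality plays no role here. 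Also note that in the fermionic case Lemma \ref{lem 4.5} does not apply verbatim, since the coefficients now carry the extra $y_{i-1},y_{i+1}$ dressing and the polynomials $T_i^{\bm s},T_i^{\bm s^{[i]}}$ are related in a more intricate way than in the pure $\gl_{1|1}$ setting; the paper accordingly redoes the computation directly from Lemma \ref{eq relations diff} and \eqref{fermionic rp}, recording along the way the identity
\[
\frac{T_{i}^{\bm{s}^{[i]}}}{T_{i}^{\bm{s}^{[i]}}[-1]}\frac{T_{i+1}^{\bm s}}{T_{i+1}^{\bm s}[1]}=\frac{T_{i+1}^{\bm{s}^{[i]}}}{T_{i+1}^{\bm{s}^{[i]}}[1]}\frac{T_i^{\bm s}[2]}{T_i^{\bm s}[1]}
\]
needed to make the shifts line up. Your plan should do the same rather than invoke Lemma \ref{lem 4.5} as a black box.
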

\begin{proof}
We want to show
\begin{align*}
&\Big(1-\frac{T_i^{\bm s}y_{i-1}[-s_i]y_i[s_i]}{T_i^{\bm s}[s_i]y_{i-1}y_i}\, \tau\Big)^{s_i}\Big(1-\frac{T_{i+1}^{\bm s}y_{i}[-s_{i+1}]y_{i+1}[s_{i+1}]}{T_{i+1}^{\bm s}[s_{i+1}]y_{i}y_{i+1}}\, \tau\Big)^{s_{i+1}}\\
=&\ \Big(1-\frac{T_i^{\bm{s}^{[i]}}y_{i-1}[-s_{i+1}]\tl y_i[s_{i+1}]}{T_i^{\bm{s}^{[i]}}[s_{i+1}]y_{i-1}\tl y_i}\, \tau\Big)^{s_{i+1}}\Big(1-\frac{T_{i+1}^{\bm{s}^{[i]}}\tl y_{i}[-s_i]y_{i+1}[s_i]}{T_{i+1}^{\bm{s}^{[i]}}[s_i]\tl y_{i}y_{i+1}}\, \tau\Big)^{s_i}.
\end{align*} We have four cases, $(s_i,s_{i+1})=(\pm1,\pm1)$. The cases of $s_i=s_{i+1}$ are proved similarly to Lemma \ref{lem eigen gl2}.

The case of $s_i=-s_{i+1}=1$ is similar to Lemma \ref{lem 4.5}. Namely, we want to show
\begin{align*}
&\Big(1-\frac{T_i^{\bm s}y_{i-1}[-1]y_i[1]}{T_i^{\bm s}[1]y_{i-1}y_i}\, \tau\Big)\Big(1-\frac{T_{i+1}^{\bm s}y_{i}[1]y_{i+1}[-1]}{T_{i+1}^{\bm s}[-1]y_{i}y_{i+1}}\, \tau\Big)^{-1}\\
=&\ \Big(1-\frac{T_i^{\bm{s}^{[i]}}y_{i-1}[1]\tl y_i[-1]}{T_i^{\bm{s}^{[i]}}[-1]y_{i-1}\tl y_i}\, \tau\Big)^{-1}\Big(1-\frac{T_{i+1}^{\bm{s}^{[i]}}\tl y_{i}[-1]y_{i+1}[1]}{T_{i+1}^{\bm{s}^{[i]}}[1]\tl y_{i}y_{i+1}}\, \tau\Big).
\end{align*}
This equation is proved by a direct computation using Lemma \ref{eq relations diff} and \eqref{fermionic rp}. We only note that the following identities
\[
\frac{T_{i}^{\bm{s}^{[i]}}}{T_{i}^{\bm{s}^{[i]}}[-1]}\frac{T_{i+1}^{\bm s}}{T_{i+1}^{\bm s}[1]}=\frac{T_{i+1}^{\bm{s}^{[i]}}}{T_{i+1}^{\bm{s}^{[i]}}[1]}\frac{T_i^{\bm s}[2]}{T_i^{\bm s}[1]}=\prod_{\substack{k=1 \\ \la_{i}^{(k,\bm s)}+\la_{i+1}^{(k,\bm s)}\ne 0}}^p\frac{x-z_k-h}{x-z_k}
\]
are used.

The case of $s_i=-s_{i+1}=-1$ is similar.
\end{proof}
We denote the rational difference operator corresponding to a population $P$ by $\cR_P$. 

\begin{rem}
Taking the quasiclassical limit $h\to 0$, a solution $\bm{t_h}$ of BAE \eqref{eq BAE XXX} tends to a solution of BAE for the Gaudin model associated to $\gl_{m|n}$ represented by a tuple $\bm{\mathcal Y}=(\mc Y_1,\dots,\mc Y_{m+n-1})$, see Remark \ref{rem BAE limit}. Note that $\tau=e^{-h\pa_x}$, we have
\[
1-\frac{T_i^{\bm s}y_{i-1}[-s_i]y_i[s_i]}{T_i^{\bm s}[s_i]y_{i-1}y_i}\, \tau=h\left(\pa_x-s_i\Big(\ln\frac{\mathscr T_i^{\bm s}\mathcal Y_{i-1}}{\mathcal Y_i}\Big)'\right)+\mathcal O(h^2),
\]
where $\mathscr T_i^{\bm s}=\prod_{k=1}^p(x-z_k)^{\la_i^{(k,\bm s)}}$, $\mc Y_0=\mc Y_{m+n}=1$. Ignoring the terms in $\mc O(h^2)$ for each factor, one gets from $\cR^{\bm s}(\bm y)$ the rational pseudo-differential operator $R^{\bm s}(\bm {\mc Y})$ defined in \cite[equation (6.5)]{HMVY18}.\qed
\end{rem}

The transfer matrix $\cT(x)$ (associated to the vector representation) can be included in a natural commutative algebra 
$\mc B$ generated by transfer matrices associated to other finite dimensional representations of $\YglMN$, c.f. \cite{KSZ}, \cite{TZZ}. We expect that similar to the even case, the rational difference operator $\cR^{\bm{s}}(\bs y)$ encodes eigenvalues of algebra $\mc B$ acting on the Bethe vector corresponding to $\bs y$, c.f \cite{T}. Then, Theorem \ref{thm diffoper inv} would assert that formulas for eigenvalues of $\mc B$ acting on $L(\bla,\bm z)$ do not depend on a choice of $\bs y$ in the population.

Similar to Lemmas \ref{lem eigen gl2} and \ref{lem gl11 eigen}, we show that formula for eigenvalue \eqref{hamiltonian}  or \eqref{glmn eigenvalue} does not change under $\gl_{m|n}$ reproduction procedure. 

\begin{lem}
Let $\bs y=(y_1,\dots,y_{m+n-1})$ be a sequence of polynomials such that there exists a polynomial $\tl y_i$
satisfying \eqref{bosonic rp} if $s_i=s_{i+1}$ or \eqref{fermionic rp} if $s_i=-s_{i+1}$.  Then $\mc E_{\bs y}(x)=\mc E_{\bs y^{[i]}}(x)$, where
 $\bs y^{[i]}=(y_1,\dots,\tl y_i,\dots,y_{m+n-1})$.
\end{lem}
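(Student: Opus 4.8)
The plan is to use that the eigenvalue \eqref{glmn eigenvalue} is a sum over $a=1,\dots,m+n$ in which the only polynomial modified in passing from $\bs y$ to $\bs y^{[i]}$, namely $y_i$ (replaced by $\tl y_i$), appears solely in the summands $a=i$ and $a=i+1$. The summands with $a\neq i,i+1$ coincide for $\mc E_{\bs y}$ and $\mc E_{\bs y^{[i]}}$: in the bosonic case the parity sequence does not change, and in the fermionic case $\bs s^{[i]}$ differs from $\bs s$ only in slots $i,i+1$, so $T^{\bs s^{[i]}}_a=T^{\bs s}_a$ and $s^{[i]}_a=s_a$ for $a\neq i,i+1$. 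Hence $\mc E_{\bs y}(x)-\mc E_{\bs y^{[i]}}(x)$ reduces to the difference of the two-term blocks $\{a=i,\ a=i+1\}$, and I would check that this difference vanishes in the two cases $s_i=s_{i+1}$ and $s_i=-s_{i+1}$ separately, following the computations of Lemmas~\ref{lem eigen gl2} and \ref{lem gl11 eigen} respectively. Throughout one has $\tl y_i\neq 0$, as is implicit in $\mc E_{\bs y^{[i]}}$ being defined.

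In the bosonic case $s_i=s_{i+1}=:\epsilon$, with parity unchanged, I would rewrite $y_i\tl y_i[-\epsilon]-\tl y_i y_i[-\epsilon]=\Wr^{\epsilon}(y_i,\tl y_i)$ and $\tl y_i[\epsilon]y_i-y_i[\epsilon]\tl y_i=-\Wr^{\epsilon}(y_i,\tl y_i)[\epsilon]$, substitute \eqref{bosonic rp} (namely $\Wr^{\epsilon}(y_i,\tl y_i)=T^{\bs s}_i(T^{\bs s}_{i+1})^{-1}y_{i-1}[-\epsilon]y_{i+1}$, together with its $[\epsilon]$-shift), and observe that the $a=i$ and $a=i+1$ contributions then become negatives of one another. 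This is exactly the telescoping in the proof of Lemma~\ref{lem eigen gl2}, now carrying the inert spectator factors $y_{i-1}[-\epsilon]/y_{i-1}$ (in $a=i$) and $y_{i+1}[\epsilon]/y_{i+1}$ (in $a=i+1$).

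In the fermionic case $s_i=-s_{i+1}=:s$ the parity changes, so I would first record the dictionary relating the ratios of $T$'s in \eqref{glmn eigenvalue} to the polynomials $\varphi^{\bs s}_i,\psi^{\bs s}_i$ of Section~\ref{subsec glmn rep pro}: from \eqref{eq functions T}, using $\la^{(k,\bs s)}_i,\la^{(k,\bs s)}_{i+1}\gge 0$ (so that $\la^{(k,\bs s)}_i+\la^{(k,\bs s)}_{i+1}=0$ forces both to vanish), one gets
\[
\frac{T^{\bs s}_i}{T^{\bs s}_i[s_i]}=\frac{\varphi^{\bs s}_i}{Z},\qquad
\frac{T^{\bs s}_{i+1}}{T^{\bs s}_{i+1}[s_{i+1}]}=\frac{\psi^{\bs s}_i}{Z},\qquad
Z:=\!\!\!\prod_{k\,:\,\la^{(k,\bs s)}_i+\la^{(k,\bs s)}_{i+1}\neq 0}\!\!\!(x-z_k),
\]
and the analogous formulas for $\bs s^{[i]}$, where the $\gl_{1|1}$ identities of Section~\ref{sec repro pro gl11} applied at slots $i,i+1$ give $\varphi^{\bs s^{[i]}}_i=\psi^{\bs s}_i[s_i]$, $\psi^{\bs s^{[i]}}_i=\varphi^{\bs s}_i[s_i]$ with the same normalizer $Z$. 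Plugging these into the two-term blocks and invoking \eqref{fermionic rp} (for $\mc E_{\bs y}$) and its $[s]$-shift (for $\mc E_{\bs y^{[i]}}$), both blocks collapse to the common value $s\,y_i[s]\,\tl y_i[-s]\,\big(Z\,y_{i-1}y_{i+1}\big)^{-1}$, which gives the desired equality; this is the analogue of Lemma~\ref{lem gl11 eigen}. I expect this fermionic bookkeeping to be the only real difficulty: one must keep straight how $T^{\bs s}_i$, $\varphi^{\bs s}_i$, $\psi^{\bs s}_i$ transform under the slot-swap $\bs s\mapsto\bs s^{[i]}$, and handle the normalizer $Z$ coming from the atypical indices $k$ with $\la^{(k,\bs s)}_i+\la^{(k,\bs s)}_{i+1}=0$; the bosonic case is by contrast a one-line telescoping.

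A cleaner alternative is to expand \eqref{eq rational diff oper glmn} as a power series in $\tau$ inside the skew Laurent division ring $\bK((\tau))\supset\bK(\tau)$, using $(1-f\tau)^{-1}=\sum_{k\gge 0}(f\tau)^k$: each factor $(1-f_a\tau)^{s_a}$ of $\cR^{\bs s}(\bs y)$ contributes $-s_a f_a$ to the coefficient of $\tau$, so $\cR^{\bs s}(\bs y)=1-\mc E_{\bs y}(x)\,\tau+\mc O(\tau^2)$ by \eqref{glmn eigenvalue}, i.e.\ $\mc E_{\bs y}(x)$ is read off from $\cR^{\bs s}(\bs y)$. The lemma then follows immediately from the one-step invariance $\cR^{\bs s}(\bs y)=\cR^{\bs s^{[i]}}(\bs y^{[i]})$ established inside the proof of Theorem~\ref{thm diffoper inv}, which uses only \eqref{bosonic rp}/\eqref{fermionic rp}, exactly the hypotheses available here.
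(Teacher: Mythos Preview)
Your primary approach is correct and is exactly what the paper intends: its proof consists of the single sentence ``The proof is similar to proofs of Lemmas~\ref{lem eigen gl2} and \ref{lem gl11 eigen},'' and you have carefully filled in the details of the two-term block computation in both the bosonic and fermionic cases. The bookkeeping you flag in the fermionic case (the identities $\varphi_i^{\bm s^{[i]}}=\psi_i^{\bm s}[s_i]$, $\psi_i^{\bm s^{[i]}}=\varphi_i^{\bm s}[s_i]$ and the stability of the normalizer $Z$ under $\bm s\mapsto\bm s^{[i]}$) is exactly what is needed, and your collapse of both two-term blocks to $s\,y_i[s]\,\tl y_i[-s]\,(Z\,y_{i-1}y_{i+1})^{-1}$ is correct.

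Your alternative via the $\tau$-expansion of $\cR^{\bm s}(\bm y)$ is a genuinely different and cleaner route not taken in the paper. It identifies $\mc E_{\bm y}(x)$ as minus the coefficient of $\tau$ in the formal power-series expansion of $\cR^{\bm s}(\bm y)$ in $\bK[[\tau]]$, so that the lemma becomes an immediate corollary of the one-step identity $\cR^{\bm s}(\bm y)=\cR^{\bm s^{[i]}}(\bm y^{[i]})$ established inside the proof of Theorem~\ref{thm diffoper inv}. This approach makes transparent that the invariance of the eigenvalue is a shadow of the invariance of the full rational difference operator, and it avoids redoing the case analysis. The paper presents Theorem~\ref{thm diffoper inv} and the present lemma as parallel consequences of \eqref{bosonic rp}--\eqref{fermionic rp} rather than deriving one from the other, but your deduction is valid and arguably more economical.
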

\begin{proof}
The proof is similar to proofs of Lemmas \ref{lem eigen gl2} and \ref{lem gl11 eigen}.
\end{proof}

\subsection{Example of a $\gl_{2|1}$ population}
In this section, we give an example of a population for the case of  $\gl_{2|1}$. 

Set $m=2$, $n=1$, and $p=3$. There are three parity sequences in $S_{2|1}$, namely, $\bm{s}_0=(1,1,-1)$, $\bm{s}_1=(1,-1,1)$, and $\bm{s}_2=(-1,1,1)$. 

Let $\bm{\lambda}=(\lambda^{(1)},\lambda^{(2)},\lambda^{(3)})$, where  $\lambda^{(i)}=(1,1,0)$, for $i=1,2,3$, in standard parity sequence $\bm s_0$. Let $\bm l=(0,0)$ and $\bm{y}=(y_1,y_2)=(1,1)$. We also set $h=1$.

Let $\bm{z}=(0,\sqrt{2},-\sqrt{2})$. Our choice of $\bm z$ is such that $z_i-z_j\notin h\Z$ for $i\ne j$. We have $\bm{T}=\bm{T}^{\bm{s}_0}=(x^3+3x^2+x-1,x^3+3x^2+x-1,1)$.We consider the population $P_{(1,1)}$ of solutions of the BAE associated to $\bm s_0$, $\bm z$, $\bla$, originated at $\bm y$.

\begin{enumerate}
\item Applying bosonic reproduction procedure in the first direction to $\bm{y}$, we have $\bm{s}_0^{[1]}=\bm{s}_0$, $\bm{T}^{\bs s_0}=\bm{T}$, and $\bm{y}^{[1]}_c=(y_1^{[1]},y_2^{[1]})=(x-c,1)$, where $c\in\C\bP^1$. Note that $\bm{y}_{\infty}^{[1]}=(1,1)=\bm{y}$.
\item We then apply fermionic reproduction procedure in the second direction to $\bm{y}_c^{[1]}$. We have $(\bm{s}_0)^{[2]}=\bm{s}_1$ and  $\bs T^{\bs s_1}=(x^3+3x^2+x-1,x^3-3x^2+x+1,1)$. We have 
\[
(\bm{y}_{c}^{[1]})^{[2]}=(x-c,4x^3-(6+3c)x^2+3cx+c+1).
\]
\item Finally, apply fermionic reproduction procedure in the first direction to $(\bm{y}_c^{[1]})^{[2]}$. We have  $(\bm{s}_1)^{[1]}=\bm{s}_2$ and $\bs T^{\bs s_2}=\left((x-1)(x-2)(x^2-2x-1)(x^2-4x+2),1,1\right)$. We have
\[
((\bm{y}_c^{[1]})^{[2]})^{[1]}=\big(6(x-1)^4-9(x-1)^2+1,4x^3-(6+3c)x^2+3cx+c+1\big).
\]
\end{enumerate}

It is easy to check that all further reproduction procedures cannot create a new pair of polynomials. Therefore the $\gl_{2|1}$ population $P_{(1,1)}$ is the union of three $\C\bP^1$, $P_{(1,1)}^{\bm{s}_0}=\{(x-c,1)\;|\;c\in\C\bP^1\}$, $P_{(1,1)}^{\bm{s}_1}=\{(x-c,4x^3-(6+3c)x^2+3cx+c+1)\;|\;c\in\C \bP^1\}$, and $P_{(1,1)}^{\bm{s}_2}=\{(6(x-1)^4-9(x-1)^2+1 ,4x^3-(6+3c)x^2+3cx+c+1)\;|\;c\in\C\bP^1\}$.

\section{Populations and superflag varieties}\label{sec pop flag variety}
In this section, we show that $\gl_{m|n}$ populations associated to typical $\bla$ are isomorphic to the variety of the full superflags.

\subsection{Discrete exponents and dominants}
Following \cite{HMVY18}, we introduce the following partial ordering on the set of partitions with $r$ parts. Let $\bm a=(a_1\lle a_2\lle \dots\lle a_r)$ and $\bm b=(b_1\lle b_2\lle \dots\lle b_r)$, $a_i,b_i\in \Z_{\gge 0}$, be two partitions with $r$ parts. If $b_i\gge a_i$ for all $i=1,\dots,r$, we say that $\bm b$ \emph{dominates} $\bm a$.

For a partition $\bm a$ with $r$ parts, we call the smallest partition  with $r$ distinct parts that dominates $\bm a$ the \emph{dominant} of $\bm a$ and denote it by $\bar{\bm a}=(\bar{a}_1<\bar{a}_2<\dots<\bar{a}_r)$. Namely, the partition $\bar{\bm a}$ is such that $\bar{\bm a}$ dominates $\bm a$ and if a partition $\bm a'$ with $r$ distinct parts dominates $\bm a$ then $\bm a'$ dominates $\bar{\bm a}$. The partition $\bar{\bm a}$ is unique.

We identify a set of non-negative integers with a partition by rearranging their elements into weakly increasing order.

This definition is motivated by the relation of exponents for a sum of spaces of functions to exponents of the summands. 
We describe this phenomenon for the discrete exponents of spaces of functions.

Let $V$ be an $r$-dimensional space of functions. Let $z\in \C$ be such that all functions in $V$ are well-defined at $z-h\Z$. Then there exists a partition with $r$ distinct parts $\bm c=(c_1<\cdots<c_r)$ and a basis of $\{v_1,\dotsm,v_r\}$ of $V$ such that for $i=1,\dots,r$, we have $v_i(z-jh)=0$ for $j=1,\dots,c_i$ and $v_i(z-(c_i+1)h)\ne 0$. This sequence of integers is defined uniquely and will be called the \emph{sequence of discrete exponents of $V$ at $z$}. We denote the set $\bm c$ by $\bm E_z(V)$.

Let $V_1,\dots,V_k$ be spaces of functions such that the sum $V=\sum_{i=1}^kV_i$ is a direct sum. Let $\bm a_z=\sqcup_{i=1}^k \bm E_z(V_i)$, then $\bm E_z(V)$ dominates $\bar{\bm a}_z$. Moreover, for generic spaces of functions $V_i$, we have the equality $\bm E_z(V)=\bar{\bm a}_z$.

\subsection{Space  of rational functions associated to a solution of BAE}
\label{sec 6.2}
Let $\bm\lambda=(\lambda^{(1)},\dots,\lambda^{(p)})$ be a sequence of polynomial $\gl_{m|n}$ weights. Let $\bs z=(z_1,\dots,z_p)$ be an $h$-generic sequence of complex numbers.

Let $\bm y=(y_1,\dots,y_{m+n-1})$ represent a solution of the BAE associated to $\bs \la,\bs z$, and the standard parity sequence $\bs s_0$. Suppose further that $\bs y$ is generic with respect to $\bs \la,\bs z,\bm s_0$. Recall the rational difference operator $\cR^{\bm s_0}(\bs y)=\cD_{\bar 0}(\bs y)\cD_{\bar 1}^{-1}(\bs y)$ associated to the population $P_{(\bm y,\bm s_0)}$ generated by $\bm y$, see \eqref{eq rational diff oper glmn}. Let $V_{\bm y}=\ker \cD_{\bar 0}(\bs y)$ and $U_{\bm y}=\ker \cD_{\bar 1}(\bs y)$.

Note that the sequence $(y_1,\dots,y_{m-1})$ represents a solution of the BAE associated to the Lie algebra $\gl_m$. It follows from \cite{MV03} that one can generate a $\gl_m$ population starting from $(y_1,\dots,y_{m-1})$ using bosonic reproduction procedures. Moreover, the corresponding difference operator to this population is given by $y_m \cdot \cD_{\bar 0}(\bs y)\cdot (y_m)^{-1}$. Therefore, by \cite[Proposition 4.7]{MV03}, the space $y_m\cdot V_{\bm y}$ is an $m$-dimensional space of polynomials. Similarly, since $(y_{m+1},\dots,y_{m+n-1})$ represents a solution of the BAE associated to the Lie algebra $\gl_n$, the space $T_{m+1}[-1]y_m\cdot U_{\bm y}$ is an $n$-dimensional space of polynomials. In particular, $V_{\bm y}$ and $U_{\bm y}$ are spaces of rational functions.

In the remainder of Section \ref{sec pop flag variety}, we impose the condition that $y_m(z_i+kh)\ne 0$ for $i=1,\dots,p$ and $k\in\Z$. 

Since $\bm z$ is $h$-generic and $y_m(z_i+kh)\ne 0$ for $1\lle i\lle p$ and $k\in\Z$, it follows from \cite[Corollary 7.5]{MTV2} that the sequence of discrete exponents $\bm E_{z_i}(y_m\cdot V_{\bm y})$ is given by
\[
\big(\la^{(i)}_{m}<\la^{(i)}_{m-1}+1<\dots<\la^{(i)}_{m-k+1}+k-1<\dots<\la^{(i)}_{1}+m-1\big).
\]
Therefore the sequence of discrete exponents $\bm E_{z_i+\la^{(i)}_{m+1}h}(T_{m+1}[-1]y_m\cdot V_{\bm y})$ is given by
\[
\big(\la^{(i)}_{m}+\la^{(i)}_{m+1}<\la^{(i)}_{m-1}+\la^{(i)}_{m+1}+1<\dots<\la^{(i)}_{m-k+1}+\la^{(i)}_{m+1}+k-1<\dots<\la^{(i)}_{1}+\la^{(i)}_{m+1}+m-1\big).
\]
Similarly, the sequence of discrete exponents $\bm E_{z_i+\la^{(i)}_{m+1}h}(T_{m+1}[-1]y_m\cdot U_{\bm y})$ is given by
\[
\big(0<\la^{(i)}_{m+1}-\la^{(i)}_{m+2}+1<\dots<\la^{(i)}_{m+1}-\la^{(i)}_{m+k}+k-1<\dots<\la^{(i)}_{m+1}-\la^{(i)}_{m+n}+n-1\big).
\]

\begin{lem}\label{lem empty cap}
If $\bla$ is typical, then $V_{\bm y}\cap U_{\bm y}=0$.
\end{lem}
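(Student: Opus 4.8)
The plan is to reduce the statement to one about spaces of polynomials and then compare discrete exponents at a place coming from a typical weight. Put $\wt V := T_{m+1}[-1]\,y_m\cdot V_{\bm y}$ and $\wt U := T_{m+1}[-1]\,y_m\cdot U_{\bm y}$; by the discussion preceding the lemma these are spaces of polynomials of dimensions $m$ and $n$. Since multiplication by the fixed nonzero element $T_{m+1}[-1]\,y_m\in\bK$ is injective, $V_{\bm y}\cap U_{\bm y}=0$ if and only if $\wt V\cap\wt U=0$, so I would prove the latter.

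By hypothesis there is an index $k$ with $\la^{(k)}$ typical, i.e.\ $\la^{(k)}_m\gge n$; set $z_\ast=z_k+\la^{(k)}_{m+1}h$. The exponent sequences displayed just above give that the smallest element of $\bm E_{z_\ast}(\wt V)$ is $\la^{(k)}_m+\la^{(k)}_{m+1}$, so every polynomial in $\wt V$ vanishes at $z_\ast-h,\dots,z_\ast-(\la^{(k)}_m+\la^{(k)}_{m+1})h$; while the largest element of $\bm E_{z_\ast}(\wt U)$ equals $\la^{(k)}_{m+1}-\la^{(k)}_{m+n}+n-1$, which is at most $\la^{(k)}_{m+1}+n-1$ since $\la^{(k)}_{m+n}\gge0$, hence strictly less than $\la^{(k)}_m+\la^{(k)}_{m+1}$ because $\la^{(k)}_m\gge n$. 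Consequently a common element $f\in\wt V\cap\wt U$ would be an element of $\wt U$ vanishing at more consecutive points $z_\ast-h,z_\ast-2h,\dots$ than the top discrete exponent of $\wt U$ at $z_\ast$ allows, and must therefore vanish identically; this is the desired contradiction.

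The only ingredient I would need beyond the exponent formulas already proved is the elementary fact that, for a space of polynomials $P$ and a point $z$, the subspace of $f\in P$ vanishing at $z-h,\dots,z-ch$ has dimension $\#\{i:e_i\gge c\}$, where $e_1<\dots<e_{\dim P}$ are the discrete exponents $\bm E_z(P)$; in particular this subspace is $0$ once $c>e_{\dim P}$. This is the usual triangular argument on the basis of $P$ realizing $\bm E_z(P)$ --- the same bookkeeping behind the domination statement recalled in the previous subsection --- and is routine. I do not expect a genuine obstacle here: once the two exponent sequences are in hand, the whole proof comes down to the inequality $\la^{(k)}_m\gge n$, which is the one and only place the typicality hypothesis is used.
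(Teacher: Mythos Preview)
Your proof is correct and follows essentially the same approach as the paper's: multiply both kernels by $T_{m+1}[-1]y_m$ to pass to spaces of polynomials, pick an index $k$ with $\la^{(k)}_m\gge n$, and compare discrete exponents at $z_k+\la^{(k)}_{m+1}h$ to see that the largest exponent of $\wt U$ is strictly below the smallest exponent of $\wt V$. The only difference is that you spell out the elementary observation about vanishing orders versus top exponents, which the paper absorbs into the phrase ``by the definition of discrete exponents''.
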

\begin{proof}
Since $\bla$ is typical, there exists some $i_0\in\{1,\dots,p\}$ such that $\la^{(i_0)}_{m}\gge n$. Therefore the largest discrete exponent of $T_{m+1}[-1]y_m\cdot U_{\bm y}$ at $z_{i_0}+\la^{(i_0)}_{m+1}h$ is strictly less than the smallest discrete exponent of $T_{m+1}[-1]y_m\cdot V_{\bm y}$ at $z_{i_0}+\la^{(i_0)}_{m+1}h$, namely,
\[
\la^{(i_0)}_{m+1}-\la^{(i_0)}_{m+n}+n-1<n+\la_{m+1}^{(i_0)}\lle \la^{(i_0)}_{m}+\la^{(i_0)}_{m+1}.
\]
Therefore, by the definition of discrete exponents, we have $(T_{m+1}[-1]y_m\cdot U_{\bm y})\cap (T_{m+1}[-1]y_m\cdot V_{\bm y})=0$, which completes the proof.
\end{proof}

Therefore, by Proposition \ref{prop rdp}, the operator $\cR^{\bm s_0}(\bs y)$ is an $(m|n)$-rational difference operator.

\begin{rem}	
If $\bla$ is not typical, then the intersection $V_{\bm y}\cap U_{\bm y}$ may be non-trivial. For example, consider the tensor product of the vector representations, namely $L(\bla)=(\C^{m|n})^{\otimes p}$, and the sequence of polynomials $\bm y=(1,\dots,1)$. Then we have $T_1(x)=(x-z_1+h)\cdots(x-z_p+h)$ and $T_i(x)=1$ for $i=2,\dots,m+n$. Therefore for the rational difference operator $\cR^{\bm s_0}(\bs y)=\cD_{\bar 0}(\bs y)\cD_{\bar 1}^{-1}(\bs y)$, we have
\[
\cD_{\bar 0}(\bs y)=\Big(1-\frac{(x-z_1+h)\cdots(x-z_p+h)}{(x-z_1)\cdots(x-z_p)}\tau\Big)(1-\tau)^{m-1},\qquad \cD_{\bar 1}(\bs y)=(1-\tau)^{n}.\qedd
\]
\end{rem}

Fix $a\in \{0,1,\dots,m\}$ and $b\in\{0,1,\dots,n\}$. For each $1\lle i\lle p$, set $$\bm A_i=\big(\la^{(i)}_{m}+\la^{(i)}_{m+1}<\la^{(i)}_{m-1}+\la^{(i)}_{m+1}+1<\dots<\la^{(i)}_{m-a+1}+\la^{(i)}_{m+1}+a-1\big),$$ $$\bm B_i=\big(0<\la^{(i)}_{m+1}-\la^{(i)}_{m+2}+1<\dots<\la^{(i)}_{m+1}-\la^{(i)}_{m+b}+b-1\big).$$

\begin{lem}\label{lem dom}
If $b\lle \la_m^{(i)}$, then the dominant of $\bm A_i\sqcup \bm B_i$ is given by\\
\scalebox{0.97}{\parbox{\linewidth}{%
\begin{align*}
(0<\la^{(i)}_{m+1}-\la^{(i)}_{m+2}+1<\dots<\la^{(i)}_{m+1}-\la^{(i)}_{m+b}+b-1<\la^{(i)}_{m}+\la^{(i)}_{m+1}<\dots<\la^{(i)}_{m-a+1}+\la^{(i)}_{m+1}+a-1).
\end{align*}
}}\\
If $\la_{m-j+1}^{(i)}<b\lle \la_{m-j}^{(i)}$ for some $1\lle j\lle a-1$, then the dominant of $\bm A_i\sqcup \bm B_i$ is given by 
\begin{align*}
(0<\la^{(i)}_{m+1}-\la^{(i)}_{m+2}+1<\dots<\la^{(i)}_{m+1}-\la^{(i)}_{m+b}+b-1<\la^{(i)}_{m+1}+b<\la^{(i)}_{m+1}+b+1<\dots<\\\la^{(i)}_{m+1}+b+j-1< \la^{(i)}_{m-j}+\la^{(i)}_{m+1}+j<\dots<\la^{(i)}_{m-a+1}+\la^{(i)}_{m+1}+a-1).
\end{align*}
If $\la_{m-a+1}^{(i)}<b$, then the dominant of $\bm A_i\sqcup \bm B_i$ is given by\\
\scalebox{0.915}{\parbox{\linewidth}{%
\begin{align*}
(0<\la^{(i)}_{m+1}-\la^{(i)}_{m+2}+1<\dots<\la^{(i)}_{m+1}-\la^{(i)}_{m+b}+b-1<\la^{(i)}_{m+1}+b<\la^{(i)}_{m+1}+b+1<\dots<\la^{(i)}_{m+1}+b+a-1).
\end{align*}
}}
\end{lem}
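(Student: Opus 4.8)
The plan is to compute the dominant of $\bm A_i \sqcup \bm B_i$ directly from the definition, by reasoning about which elements of the combined multiset "collide" (become equal or out of order) and how the minimal correction propagates. First I would note the two sequences separately: $\bm A_i$ consists of the numbers $\la^{(i)}_{m-k+1}+\la^{(i)}_{m+1}+k-1$ for $k=1,\dots,a$ (strictly increasing since $\la^{(i)}$ is a partition), and $\bm B_i$ consists of $\la^{(i)}_{m+1}-\la^{(i)}_{m+k}+k-1$ for $k=1,\dots,b$ (strictly increasing, starting at $0$). Since each is already a set of distinct parts, the only obstruction to $\bm A_i \sqcup \bm B_i$ being its own dominant is overlaps or inversions between an $\bm A_i$-element and a $\bm B_i$-element. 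The comparison that governs everything is: $\la^{(i)}_{m+1} - \la^{(i)}_{m+k} + k - 1$ versus $\la^{(i)}_{m-j+1} + \la^{(i)}_{m+1} + j - 1$, i.e. (after cancelling $\la^{(i)}_{m+1}$) whether $k - \la^{(i)}_{m+k}$ exceeds $j + \la^{(i)}_{m-j+1}$.

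Next I would organize the argument by the position of $b$ relative to the thresholds $\la^{(i)}_{m} \ge \la^{(i)}_{m-1} \ge \cdots \ge \la^{(i)}_{m-a+1}$, which is exactly the trichotomy in the statement. In the first case $b \le \la^{(i)}_m$: here the largest $\bm B_i$-element $\la^{(i)}_{m+1} - \la^{(i)}_{m+b} + b - 1 \le \la^{(i)}_{m+1} + b - 1 \le \la^{(i)}_{m+1} + \la^{(i)}_m - 1 < \la^{(i)}_{m+1} + \la^{(i)}_m$, which is the smallest $\bm A_i$-element, so $\bm B_i$ sits strictly below $\bm A_i$ and the union is already a strictly increasing sequence of distinct parts — hence equal to its own dominant, giving the first displayed formula. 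In the second case $\la^{(i)}_{m-j+1} < b \le \la^{(i)}_{m-j}$: the first $j$ elements of $\bm A_i$ (those indexed by $k=1,\dots,j$, i.e. $\la^{(i)}_{m-k+1}+\la^{(i)}_{m+1}+k-1$ with $m-k+1 \ge m-j+1$, so $\la^{(i)}_{m-k+1} < b$) now fall at or below the top of $\bm B_i$, creating collisions, while the last $j$ elements of $\bm B_i$ are large; I would check that after sorting, the smallest way to separate them forces the first $j$ corrected $\bm A_i$-entries to become $\la^{(i)}_{m+1}+b, \la^{(i)}_{m+1}+b+1, \dots, \la^{(i)}_{m+1}+b+j-1$ (each bumped up to sit immediately above the top $\bm B_i$-entry and then spread out by the distinctness requirement), and that the remaining $\bm A_i$-entries $\la^{(i)}_{m-k+1}+\la^{(i)}_{m+1}+k-1$ for $k = j+1,\dots,a$ are already large enough to need no further adjustment — this yields the second formula. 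The third case $\la^{(i)}_{m-a+1} < b$ is the extreme of the second with $j = a$: all of $\bm A_i$ gets pushed up to $\la^{(i)}_{m+1}+b, \dots, \la^{(i)}_{m+1}+b+a-1$.

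The main obstacle is the bookkeeping in the middle case: one must verify both that the proposed sequence genuinely \emph{dominates} $\bm A_i \sqcup \bm B_i$ (element-wise after sorting, which requires carefully matching the $j$-th smallest element of the target to the $j$-th smallest of the source) and that it is the \emph{smallest} such sequence with distinct parts — the latter by exhibiting, for each entry, a reason it cannot be decreased without either colliding with a neighbor or dropping below a source element. I expect the clean way to do this is to use the characterization of the dominant as a greedy left-to-right construction: process the sorted multiset, and whenever the current candidate value is $\le$ the previous output, replace it by (previous $+\,1$); one then checks this greedy run produces exactly the claimed sequence in each case, using the inequalities on $b$ to pin down where the "carries" start and stop. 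The bosonic/$\gl_m$-type entries (the large tail of $\bm A_i$) are where it must be argued no carry persists, and that comes down to the gap $\la^{(i)}_{m-k+1} - \la^{(i)}_{m-k} \ge 0$ between consecutive parts being enough slack.
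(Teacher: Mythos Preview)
Your overall strategy (sort the union, run the greedy ``bump to previous $+1$'' construction of the dominant, and split into the three cases according to where $b$ sits among $\la_m^{(i)}\gge\cdots\gge\la_{m-a+1}^{(i)}$) is the same as the paper's. But there is a genuine gap: you never invoke the fact that $\la^{(i)}$ is a \emph{polynomial} $\gl_{m|n}$ weight, and the lemma is false without it.

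Concretely, the formula in cases two and three asserts that the bumped $\bm A_i$-entries land at $\la_{m+1}^{(i)}+b,\la_{m+1}^{(i)}+b+1,\dots$, i.e.\ immediately above $\la_{m+1}^{(i)}+b-1$. For that to be the \emph{minimal} choice, the top element of $\bm B_i$ must actually equal $\la_{m+1}^{(i)}+b-1$, which requires $\la_{m+b}^{(i)}=0$. More is needed: the paper uses that once $\ell:=\la_m^{(i)}<b$, the hook condition forces $\la_{m+\ell+1}^{(i)}=\cdots=\la_{m+b}^{(i)}=0$, so the last $b-\ell$ entries of $\bm B_i$ are the \emph{consecutive} integers $\la_{m+1}^{(i)}+\ell,\dots,\la_{m+1}^{(i)}+b-1$. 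This is exactly what makes each inserted $\bm A_i$-element $\la_{m-k+1}^{(i)}+\la_{m+1}^{(i)}+k-1$ (with $\la_{m-k+1}^{(i)}<b$) collide with an existing entry and get carried all the way to $\la_{m+1}^{(i)}+b+k-1$. Without the hook condition your greedy run will in general stop earlier. For instance, with $a=1$, $b=2$, $\la_m^{(i)}=1$, $\la_{m+1}^{(i)}=2$, $\la_{m+2}^{(i)}=1$ (not a polynomial weight), one gets $\bm A_i=(3)$, $\bm B_i=(0,2)$, dominant $(0,2,3)$, whereas the stated formula predicts $(0,2,4)$. So you must insert the observation that $\la_{m+r}^{(i)}=0$ for $r>\la_m^{(i)}$ before running the greedy argument in the second and third cases; after that, your plan goes through and matches the paper's proof.
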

\begin{proof}
If $b\lle \la_m^{(i)}$, the statement is clear. If $\la_{m-j+1}^{(i)}<b\lle \la_{m-j}^{(i)}$ for some $1\lle j\lle a-1$. Let $\la_m^{(i)}=\ell$. Since $\la^{(i)}$ is a polynomial $\glMN$ weight, we have $\la_{m+\ell+k}^{(i)}=0$ for $k=1,\dots,b-\ell$. In particular, the last $b-\ell$ numbers in $\bm B_i$ are consecutive integers from $\la_{m+1}^{(i)}+\ell$ to $\la_{m+1}^{(i)}+b-1$. Adding $\la^{(i)}_{m}+\la^{(i)}_{m+1}$ into $\bm B_i$, the dominant of the new set is obtained by changing $\la^{(i)}_{m}+\la^{(i)}_{m+1}$ to $\la_{m+1}^{(i)}+b$. We add the numbers of $\bm A_i$ one by one (from left to right) into $\bm B_i$. Inductively, adding $\la^{(i)}_{m+1}+\la^{(i)}_{m-k+1}+k-1$, if $\la^{(i)}_{m-k+1}<b$, then the dominant is obtained by changing $\la^{(i)}_{m+1}+\la^{(i)}_{m-k+1}+k-1$ to $\la^{(i)}_{m+1}+b+k-1$. Therefore the lemma follows.
\end{proof}

\subsection{Polynomials $\pi_{a,b}$}
Let $\bm{s}\in S_{m|n}$ be a parity sequence. Let $\bm{\lambda}=(\lambda^{(1)},\dots,\lambda^{(p)})$ be a sequence of polynomial $\gl_{m|n}$ weights. Let $\bm{z}=(z_1,\dots,z_p)$ be an $h$-generic sequence of complex numbers. Let $\bm{T}^{\bm{s}}$ be a sequence of polynomials associated to $\bm{s}$, $\bm{\lambda}$, and $\bm{z}$, see \eqref{eq functions T}. We set $T_i=T_i^{\bs s_0}$ the polynomials corresponding to the standard parity $\bm s_0$. 

Define polynomials $\pi_{a,b}^{\bla,\bm z}$ by 
\be
\pi_{a,b}^{\bla,\bm z}(x)=\prod_{k=1}^p\prod_{i=1}^a\prod_{j=1}^{\min\{b,\la_{m-i+1}^{(k)}\}}(x-z_k+(i+j-a-b-1)h).
\ee
We often abbreviate $\pi_{a,b}^{\bla,\bm z}$ to $\pi_{a,b}$. 

The polynomials $T_i^{\bs s}$ can be expressed in terms of $T_i$ and $\pi_{a,b}$.
Recall that we have
\be
\bs s_i^+=\begin{cases} m-\sigma_{\bs s}(i), & \mbox{if } s_i=1,\\  \sigma_{\bs s}(i)-i,  & \mbox{if } s_i=-1, \end{cases}
\qquad 
\bs s_i^-=\begin{cases}i-\sigma_{\bs s}(i), & \mbox{if } s_i=1,\\  \sigma_{\bs s}(i)-m-1,  & \mbox{if } s_i=-1. \end{cases}
\ee
\begin{thm}\label{thm relation between T}
We have
\begin{align*}
T^{\bs s}_i= \ 
T_{\sigma_{\bs s}(i)}[\bm s_i^-]\ \frac{\pi_{\bs s^+_i,\bs s^-_i}}{\pi_{\bs s^+_i+1,\bs s^-_i}[-1]},\; {\rm if }\ s_i=1;\qquad
T^{\bs s}_i=\ T_{\sigma_{\bs s}(i)}[\bm s_i^+]\ \frac{\pi_{\bs s^+_i,\bs s^-_i+1}}{\pi_{\bs s^+_i,\bs s^-_i}[1]},\;  {\rm if }\ s_i=-1.
\end{align*}
\end{thm}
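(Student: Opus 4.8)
The plan is to prove both formulas simultaneously by induction on the number of transpositions needed to bring the parity sequence $\bs s$ to the standard sequence $\bs s_0$. For the base case $\bs s=\bs s_0$, we have $\sigma_{\bs s_0}=\id$, $\bs s_i^+=m-i$ and $\bs s_i^-=0$ for $i\lle m$, while $\bs s_i^+=0$ and $\bs s_i^-=i-m-1$ for $i>m$; one checks directly from the definition of $\pi_{a,b}$ that $\pi_{m-i,0}=\pi_{m-i+1,0}=1$ in the relevant ranges, so the claimed identities reduce to the tautology $T_i^{\bs s_0}=T_i$. For the inductive step, suppose $\bs s$ and $\tl{\bs s}=\bs s^{[j]}$ differ by the transposition of positions $j,j+1$, and assume the formulas hold for $\tl{\bs s}$; we must deduce them for $\bs s$ (or vice versa, whichever direction decreases the length). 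Only positions $j$ and $j+1$ are affected, since for $i\ne j,j+1$ we have $\sigma_{\bs s}(i)=\sigma_{\tl{\bs s}}(i)$, $\bs s_i^{\pm}=\tl{\bs s}_i^{\pm}$, and $T_i^{\bs s}=T_i^{\tl{\bs s}}$ (the polynomial $T_i^{\bs s}$ depends only on $s_i$ and the multiset of weights, which is unchanged away from the swapped positions).

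The heart of the argument is therefore a local computation at positions $j,j+1$, split into the three cases $(s_j,s_{j+1})\in\{(1,1),(1,-1),(-1,-1)\}$ together with $(-1,1)$, which is symmetric to $(1,-1)$. When $s_j=s_{j+1}$, both $T_j^{\bs s}$ and $T_{j+1}^{\bs s}$ keep the same sign as in $\tl{\bs s}$, and swapping amounts to exchanging the two indices $\sigma_{\bs s}(j),\sigma_{\bs s}(j+1)$, which are consecutive; the identity then follows from unwinding the definitions of $\bs s_i^{\pm}$ and of the shifts $[\bs s_i^{\mp}]$ in the $\pi$-factors, using that consecutive entries of $\bs s_i^{\pm}$ differ by one. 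When $s_j=-s_{j+1}$, the sign of the formula changes at position $j$, and here I would compare directly with the recursive structure that appears in the fermionic reproduction: recall from \eqref{eq functions T} that $T_i^{\bs s}$ is the product over $k$ of $\prod_{\ell=1}^{\la_i^{(k,\bs s)}}(x-z_k+s_i\ell h)$, and from Section~\ref{sec repro pro gl11} that $\varphi_i^{\bs s}=\psi_i^{\tl{\bs s}}[-s_i]$, $\psi_i^{\bs s}=\varphi_i^{\tl{\bs s}}[-s_i]$ encode exactly how the $(1|1)$-blocks transform. Matching $\la_j^{(k,\bs s)}$ and $\la_{j+1}^{(k,\bs s)}$ against $\la_j^{(k,\tl{\bs s})}=\la_{j+1}^{(k,\tl{\bs s})}+$ (a shift) produces, after collecting the telescoping products, precisely the ratio $\pi_{a,b}/\pi_{a',b'}[\pm1]$ with the indices predicted by the change in $\bs s_j^{\pm}$ and $\bs s_{j+1}^{\pm}$ under the swap; the $\min\{b,\la_{m-i+1}^{(k)}\}$ truncation in the definition of $\pi_{a,b}$ is what accounts for atypical weights, i.e.\ for the cutoff $\tl a_k=\tl b_k=0$ when $a_k+b_k=0$.

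The main obstacle I anticipate is bookkeeping rather than conceptual: one must verify that the shifts of arguments (the $[\bs s_i^{\pm}]$ and $[\pm1]$ inside the $\pi$'s and $T$'s) compose correctly through a sequence of transpositions, so that the formula for a general $\bs s$ is genuinely independent of the chosen reduced word for $\sigma_{\bs s}$. The cleanest way to handle this is to observe that the right-hand sides in the statement are manifestly functions of $\bs s$ alone (not of any reduced word), so it suffices to check invariance under a single adjacent transposition; the consistency for longer words is then automatic, and the braid relations in $\fkS_{m+n}$ need not be checked separately. A useful sanity check along the way is to take the total product $\prod_{i=1}^{m+n}(T_i^{\bs s})^{s_i}$: the $\pi$-factors must telescope to $1$, recovering the parity-sequence independence already implicit in the invariance of $\cR_P$ from Theorem~\ref{thm diffoper inv}, and this identity serves both as motivation and as a check on the signs and shifts in the two displayed formulas.
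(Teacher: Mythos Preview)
Your inductive strategy is viable but differs from the route the paper takes, and there is one point of genuine confusion in your case analysis.

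\medskip

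\textbf{How the paper argues.} The paper does not induct on the length of $\bs s$. Instead it writes down, once and for all, a closed formula for the weight components in an arbitrary parity sequence:
\[
\la_i^{(k,\bm s)}=\begin{cases}
\la_{\sigma_{\bm s}(i)}^{(k)}-\min\bigl\{\bm s_i^-,\la_{\sigma_{\bm s}(i)}^{(k)}\bigr\}, & s_i=1,\\[4pt]
\la_{\sigma_{\bm s}(i)}^{(k)}+\#\{j\lle \bm s_i^+ : \la_{m-j+1}^{(k)}>\bm s_i^-\}, & s_i=-1,
\end{cases}
\]
and then plugs this directly into \eqref{eq functions T} and the definition of $\pi_{a,b}$. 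The $\min$ and the counting condition are exactly what produce the truncation $\min\{b,\la_{m-i+1}^{(k)}\}$ in $\pi_{a,b}$, so the identity drops out in one step with no induction. This is shorter and avoids the path-independence bookkeeping you anticipate.

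\medskip

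\textbf{Where your outline needs repair.} Your case split includes $(s_j,s_{j+1})=(1,1)$ and $(-1,-1)$, but these cases are vacuous: if $s_j=s_{j+1}$ then $\bs s^{[j]}=\bs s$ and $\sigma_{\bs s}$ does not change, so there is nothing to prove and no ``exchange of consecutive indices'' occurs. The induction on parity sequences proceeds only through odd transpositions $s_j\neq s_{j+1}$. Everything you need is in that single case; you must check that the $\gl_{1|1}$ odd-reflection rule for $(\la_j^{(k,\bs s)},\la_{j+1}^{(k,\bs s)})\mapsto(\la_j^{(k,\tl{\bs s})},\la_{j+1}^{(k,\tl{\bs s})})$ together with the jumps $\tl{\bs s}_j^+=\bs s_j^+\pm 1$, $\tl{\bs s}_{j+1}^-=\bs s_{j+1}^-\pm 1$ produces precisely the required change in the $\pi$-ratios. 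That computation is correct in outline but is essentially equivalent to verifying the paper's closed formula at one step; since that formula is no harder to verify globally than locally, the direct approach is more economical here. Your observation that the right-hand side depends only on $\bs s$, and hence that no braid-relation check is needed, is correct and is a clean way to finish if you do carry out the induction.
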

\begin{proof}
It is not hard to see that
\[
\la_i^{(k,\bm s)}=\begin{cases}
\la_{\sigma_{\bm s}(i)}^{(k)}-\min\big\{\bm s_i^-,\la_{\sigma_{\bm s}(i)}^{(k)}\big\}, & \text{ if }s_i=1,\\
\la_{\sigma_{\bm s}(i)}^{(k)}+\#\{j~|~\la_{m-j+1}^{(k)}>\bm s_i^-,j=1,2,\dots,\bm s_i^+\}, & \text{ if }s_i=-1.
\end{cases}
\]
The theorem follows from a direct computation.
\end{proof}

Note that polynomials $\pi_{a,b}$ are discrete versions of $\pi_{a,b}$ in \cite[equation (7.1)]{HMVY18}, even though our definition here is more explicit.
In particular, Theorem \ref{thm relation between T} is the counterpart of \cite[Theorem 7.2]{HMVY18}.

The polynomial $\pi_{a,b}$ is related to the dominants of $\bm A_i\sqcup \bm B_i$ for all $1\lle i\lle p$. Write the dominant $\overline{\bm A_i\sqcup \bm B_i}$ of $\bm A_i\sqcup \bm B_i$ as
\[
0=c_{a+b}^{(i)}<c_{a+b-1}^{(i)}+1<\dots<c_{a+b-j}^{(i)}+j<\dots<c_1^{(i)}+a+b-1,
\]
where $c_j^{(i)}$ are computed explicitly from Lemma \ref{lem dom}. Let $\tl z_i=z_i+\la_{m+1}^{(i)}h$ and set
\beq\label{eq poly T divisible}
\mathscr T_i(x)=\prod_{k=1}^p\prod_{j=1}^{c_i^{(k)}}(x-\tl z_k+jh).
\eeq
\begin{prop}\label{prop pi}
We have
\[
\pi_{a,b}\prod_{j=1}^{a}\mathscr T_j[j]=\prod_{i=1}^a\big(T_{m-a+i}[b+i]T_{m+1}[i-1]\big).
\]
\end{prop}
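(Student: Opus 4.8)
The plan is to verify the claimed identity by comparing, for each fixed evaluation point $z_k$, the exponents with which the "affine-linear" factors $(x-\tl z_k+jh)$ occur on the two sides. Since all of $\pi_{a,b}$, $\mathscr T_j$, $T_{m-a+i}$, $T_{m+1}$ are products over $k$ of monic polynomials whose roots lie in the single coset $z_k-h\Z$, and since $\bm z$ is $h$-generic, it suffices to fix $k$, write $\ell=\la_m^{(k)}$ and abbreviate $\la_{m-i+1}^{(k)}$, $\la_{m+1}^{(k)}$, etc., and check the identity of the $k$-th factors. Both sides are then monic polynomials all of whose roots are of the form $x=\tl z_k-jh=z_k-(j-\la_{m+1}^{(k)})h$; so the identity reduces to an equality of multisets of integers $j$.

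The first step is to read off the left-hand multiset. The factor $\pi_{a,b}$ contributes, at point $k$, the integers $a+b+1-i-j$ (shifted to the $\tl z_k$-normalization) for $1\lle i\lle a$, $1\lle j\lle \min\{b,\la_{m-i+1}^{(k)}\}$; the factor $\prod_{j=1}^a\mathscr T_j[j]$ contributes, using \eqref{eq poly T divisible}, the integers coming from the dominant sequence $\overline{\bm A_k\sqcup\bm B_k}$ computed in Lemma \ref{lem dom}. The second step is to read off the right-hand multiset: $T_{m-a+i}[b+i]$ contributes, at point $k$, the $\la_{m-a+i}^{(k)}$ consecutive integers ending appropriately, and $T_{m+1}[i-1]$ contributes $\la_{m+1}^{(k)}$ consecutive integers; here one uses the explicit product form \eqref{eq functions T} of the $T$'s together with the shift notation $f[i]=f(x-ih)$. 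The third step is the bookkeeping: split into the three cases of Lemma \ref{lem dom} according to how $b$ compares with the $\la_{m-j+1}^{(k)}$, and in each case match the "$\min\{b,\la_{m-i+1}^{(k)}\}$" truncations in $\pi_{a,b}$ against the switch from $\la_{m-j}^{(k)}+\la_{m+1}^{(k)}+j$ to $\la_{m+1}^{(k)}+b+j-1$ in the dominant. The combinatorial identity that makes everything fit is exactly that $\mathscr T_j$ records, via the $c_j^{(k)}$'s, the "defect" between $\bm A_k\sqcup\bm B_k$ and its dominant, and that defect is precisely what is needed to promote $\pi_{a,b}$ (a product of truncated ranges) to the full product $\prod_i T_{m-a+i}[\,\cdot\,]T_{m+1}[\,\cdot\,]$ (a product of untruncated ranges).

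I expect the main obstacle to be purely notational: keeping the three nested shifts — the intrinsic shift $jh$ inside each $T$, the external $[b+i]$ and $[i-1]$, and the re-centering from $z_k$ to $\tl z_k=z_k+\la_{m+1}^{(k)}h$ — consistent across all three cases of Lemma \ref{lem dom}, and in particular handling the boundary case where $\la_{m-i+1}^{(k)}=0$ for large $i$ (where, since $\la^{(k)}$ is a polynomial weight, the tail of $\bm B_k$ consists of consecutive integers, as already exploited in the proof of Lemma \ref{lem dom}). Once the multiset translation is set up cleanly, each of the three cases is a direct, if slightly tedious, telescoping check; no new idea beyond Lemma \ref{lem dom} and the definitions \eqref{eq functions T}, \eqref{eq poly T divisible} is required. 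This is the discrete counterpart of \cite[Proposition 7.4]{HMVY18} (or the analogous statement there), and the argument parallels that one.
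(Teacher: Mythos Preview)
Your proposal is correct and follows essentially the same approach as the paper: the paper's proof simply states that the result is obtained from Lemma~\ref{lem dom} by a direct computation, and your outline spells out precisely what that computation is --- fixing $k$, reducing to a comparison of multisets of shifts, and matching the three cases of Lemma~\ref{lem dom} against the truncations in $\pi_{a,b}$. No additional idea beyond Lemma~\ref{lem dom} and the definitions is needed, exactly as you anticipate.
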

\begin{proof}
The lemma is obtained from Lemma \ref{lem dom} by a direct computation.
\end{proof}

\subsection{Generating map}Recall the notation from the beginning of Section \ref{sec 6.2}, where $V_{\bm y}=\ker \cD_{\bar 0}(\bs y)$ and $U_{\bm y}=\ker \cD_{\bar 1}(\bs y)$.

For $a\in\{0,1,\dots,m\}$, $b\in\{0,1,\dots,n\}$, $v_1,\dots, v_a\in V_{\bm y}$, $u_1,\dots, u_b\in U_{\bm y}$, we define the  function
\begin{align*}
y_{a,b}=&\ \Wr(v_1,\dots,v_{a},u_1,\dots,u_b)[1]\pi_{a,b}  y_m[a+b]\frac{T_{m+1}[a+b-1]\cdots T_{m+b}[a] }{T_{m}[a+b]\cdots T_{m-a+1}[b+1]}\,.
\end{align*}

We impose the technical condition that $y_m$ has only simple roots and is relatively prime to $y_m[k]$ for all non-zero integers $k$.

\begin{prop}\label{prop y polynomial}
The function $y_{a,b}$ is a polynomial. 
\end{prop}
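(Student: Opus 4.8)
The plan is to prove that the a priori rational function $y_{a,b}$ has no poles. Since $y_mV_{\bm y}$ and $T_{m+1}[-1]y_mU_{\bm y}$ are spaces of polynomials (recalled before Lemma~\ref{lem empty cap}), we may write $v_j=\hat v_j/y_m$ and $u_j=\hat u_j/(T_{m+1}[-1]y_m)$ with $\hat v_j,\hat u_j$ polynomials, so $v_j$ has at worst simple poles along $y_m$ and $u_j$ at worst simple poles along $y_m$ and $T_{m+1}[-1]$. Since $\bm z$ is $h$-generic and $y_m(z_k+jh)\ne 0$ for all $j\in\Z$, the roots of $y_m$ and their shifts are disjoint from the roots of all $T_i$ and of $\pi_{a,b}$, so the potential $y_m$-poles and the $T_i$-poles of $y_{a,b}$ may be treated separately. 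With $G:=T_{m+1}[-1]y_m$, the functions $Gv_j=T_{m+1}[-1]\hat v_j$ and $Gu_j=\hat u_j$ are polynomials, and the row-scaling identity $\Wr(Gg_1,\dots,Gg_r)=\big(\prod_{i=0}^{r-1}G[i]\big)\Wr(g_1,\dots,g_r)$ for Casorati determinants yields
\[
\Wr(v_1,\dots,v_a,u_1,\dots,u_b)[1]=\frac{N}{\prod_{i=1}^{a+b}T_{m+1}[i-1]\cdot\prod_{i=1}^{a+b}y_m[i]}\,,\qquad N:=\Wr(Gv_1,\dots,Gv_a,Gu_1,\dots,Gu_b)[1]\in\C[x].
\]

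Identifying $T_m[a+b]\cdots T_{m-a+1}[b+1]$ with $\prod_{i=1}^aT_{m-a+i}[b+i]$ and using Proposition~\ref{prop pi} to replace $\pi_{a,b}$, one rewrites $y_{a,b}$ after cancellation as an identity of the shape
\[
y_{a,b}=\frac{N\cdot\prod_{i=1}^{b-1}T_{m+1+i}[a+b-1-i]}{\prod_{i=1}^{a+b-1}y_m[i]\cdot\prod_{i=a}^{a+b-2}T_{m+1}[i]\cdot\prod_{j=1}^{a}\mathscr{T}_j[j]}
\]
with the $\mathscr{T}_j$ of \eqref{eq poly T divisible}, so it suffices to prove the two polynomial divisibilities $\prod_{i=1}^{a+b-1}y_m[i]\mid N$ and $\prod_{i=a}^{a+b-2}T_{m+1}[i]\cdot\prod_{j=1}^a\mathscr{T}_j[j]\mid N\cdot\prod_{i=1}^{b-1}T_{m+1+i}[a+b-1-i]$.

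For the first divisibility the key point is a residue identity at each root $t$ of $y_m$. Inspecting \eqref{eq rational diff oper glmn}, the only coefficient of $\cD_{\bar 0}(\bm y)$ (resp.\ of $\cD_{\bar 1}(\bm y)$) having a pole at $t$ is that of $\tau^1$, whose residue comes from the last factor $1-c_m\tau$ (resp.\ $1-c_{m+1}\tau$), where $c_m=\frac{T_m y_{m-1}[-1]y_m[1]}{T_m[1]y_{m-1}y_m}$ and $c_{m+1}=\frac{T_{m+1}y_m[1]y_{m+1}[-1]}{T_{m+1}[-1]y_m y_{m+1}}$; consequently $\operatorname{Res}_t(v)=\operatorname{Res}_t(c_m)\,v(t-h)$ for every $v\in V_{\bm y}$ and $\operatorname{Res}_t(u)=\operatorname{Res}_t(c_{m+1})\,u(t-h)$ for every $u\in U_{\bm y}$. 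The Bethe ansatz equation for the color-$m$ variable $t$ in the form \eqref{fermionic rp} with $i=m$, together with the telescoping identity $T_m\,\psi_m^{\bm s_0}\,T_{m+1}[-1]=T_m[1]\,\varphi_m^{\bm s_0}\,T_{m+1}$, gives $\operatorname{Res}_t(c_m)=\operatorname{Res}_t(c_{m+1})$; hence $\operatorname{Res}_t(w)=\rho(t)\,w(t-h)$ for all $w\in W_{\bm y}$, with $\rho(t):=\operatorname{Res}_t(c_m)$. For a root of $y_m[i-1]$ with $1\le i\le a+b-1$, only the $i$-th row of the Casorati matrix for $\Wr(v_1,\dots,u_b)$ has a pole there; expanding along it, the residue equals (up to a nonzero scalar) a determinant whose $i$-th row is $\rho(t)$ times its $(i+1)$-st row, hence vanishes. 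Thus $\Wr(v_1,\dots,u_b)$ is regular except along $y_m[a+b-1]$, so $\Wr(v_1,\dots,u_b)[1]$ has its only $y_m$-type pole along $y_m[a+b]$, which is cancelled by the numerator factor $y_m[a+b]$ of $y_{a,b}$; this yields $\prod_{i=1}^{a+b-1}y_m[i]\mid N$.

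The second divisibility is where I expect the main effort to go. The functions $Gv_1,\dots,Gv_a,Gu_1,\dots,Gu_b$ span an $(a+b)$-dimensional subspace of the polynomial space $GW_{\bm y}=GV_{\bm y}\oplus GU_{\bm y}$; by the submodule and direct-sum properties of discrete exponents recalled at the start of Section~\ref{sec pop flag variety} (following \cite[\S2]{HMVY18}), together with the exponent computations of \S\ref{sec 6.2}, its sequence of discrete exponents at each $\tl z_k=z_k+\la^{(k)}_{m+1}h$ dominates the dominant $\overline{\bm A_k\sqcup\bm B_k}$ of Lemma~\ref{lem dom}. The standard divisibility of a discrete Wronskian by the fundamental polynomial attached to its exponents then forces $N$ to be divisible by precisely the product of shifts of $\mathscr{T}_j$ and of $T_{m+1},\dots,T_{m+n}$ occurring in the denominator displayed above (modulo the listed numerator factors) --- the conversion of the exponent data of $\overline{\bm A_k\sqcup\bm B_k}$ into these polynomial factors being exactly the content of Theorem~\ref{thm relation between T} and Proposition~\ref{prop pi}, and the hypothesis that each $\la^{(k)}$ is a partition being what makes the exponents of $GV_{\bm y}$ large enough. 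Assembling the two divisibilities shows that $y_{a,b}$ is a polynomial.
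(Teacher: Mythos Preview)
Your overall architecture matches the paper's: separate the possible poles of $y_{a,b}$ into those coming from $y_m$ and its shifts, and those coming from $z_k+h\Z$, and treat them independently. For the latter, what you outline is exactly what the paper does (discrete exponents at $\tl z_k$, Lemma~\ref{lem dom}, the standard divisibility in Lemma~\ref{lem divisible}, and Proposition~\ref{prop pi}); your remark that ``this is where the main effort goes'' is inverted---in the paper this part is routine once the cited lemmas are in place, and the delicate part is the $y_m$--poles.

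For the $y_m$--poles your residue argument is a genuinely different and attractive idea, but as written it has a gap. The claim ``the only coefficient of $\cD_{\bar 0}(\bm y)$ having a pole at $t$ is that of $\tau^1$, coming from $1-c_m\tau$'' is not justified: nothing in the hypotheses prevents some $y_i$ with $i\le m-2$ from vanishing at $t$, and then $c_i,c_{i+1}$ have poles at $t$ and products like $c_i\,c_m[1]$ give $a_2$ a pole there as well. Once several $a_k$ have poles, your residue relation for $\cD_{\bar 0}v=0$ involves $v(t-h),v(t-2h),\dots$, not just $v(t-h)$, and the key identity $\operatorname{Res}_t v=\operatorname{Res}_t(c_m)\,v(t-h)$ does not follow from that computation. (The analogous issue arises for $\cD_{\bar 1}$.)

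The fix is short and keeps the spirit of your argument. Write $\cD_{\bar 0}=\cD'_{\bar 0}\,(1-c_m\tau)$ with $\cD'_{\bar 0}=\prod_{i=1}^{m-1}(1-c_i\tau)$. For any $v\in V_{\bm y}$, the function $(1-c_m\tau)v$ lies in $\ker\cD'_{\bar 0}$, and by the $\gl_{m-1}$ theory in \cite{MV03} this kernel equals $y_{m-1}^{-1}\cdot(\text{polynomials})$; since $y_{m-1}(t)\ne 0$ by genericity, $(1-c_m\tau)v$ is regular at $t$, whence $\operatorname{Res}_t v=\operatorname{Res}_t(c_m)\,v(t-h)$ exactly as you wanted. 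The same reasoning with $\cD_{\bar 1}=(1-c_{m+n}\tau)\cdots(1-c_{m+1}\tau)$ gives $\operatorname{Res}_t u=\operatorname{Res}_t(c_{m+1})\,u(t-h)$, and your BAE computation showing $\operatorname{Res}_t c_m=\operatorname{Res}_t c_{m+1}$ is correct. After this repair, your row-proportionality argument for the Casorati determinant goes through.

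For comparison, the paper handles the $y_m$--poles differently: it first proves Lemma~\ref{lem skip exp} (that $T_{m+1}y_m[1]\Wr(v,u)$ is a polynomial) via the reproduction procedure, uses it together with \cite[Corollary 7.5]{MTV2} to show that the discrete exponents of $W_{a,b}$ at each $z'_i$ dominate $(0<2<3<\dots<a+b)$, and then applies Lemma~\ref{lem divisible}. Your (repaired) residue argument bypasses Lemma~\ref{lem skip exp} and the exponent bookkeeping at roots of $y_m$, trading them for the single Bethe equation at color $m$; this is arguably more direct. Note also Remark~\ref{rem typical poly}: when $\bm\lambda$ is typical the whole proposition follows immediately from Theorem~\ref{thm repro pro glmn} by a genericity argument, and the detailed pole analysis is only needed to cover the atypical case.
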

\begin{proof}
	This proposition is proved in Section \ref{sec proof}.
\end{proof}

In the following ,we assume that $\bla$ is typical. Set $W_{\bm y}=V_{\bm y}\oplus U_{\bm y}$. Given a parity sequence $\bs s$ and a full superflag $\sF\in\sF^{\bs s}(W_{\bm y})$ generated by a homogeneous basis $\{w_1,\dots,w_{m+n}\}$, we define polynomials $y_i(\sF)$, $i=1,\dots, m+n-1$, by the formula 
$$
y_i(\sF)=\begin{cases} y_{\bs s_i^+,\bs s_i^-}, \ &{\rm if} \ s_i=1, \\  y_{\bs s_i^+,\bs s_i^-+1}, \ &{\rm if} \ s_i=-1,\end{cases}
$$
where we choose $\{v_1,\dots,v_m\}$ and $\{u_1,\dots,u_n\}$ such that the basis $\{w_1,\dots,w_{m+n}\}$ is associated to $\{v_1,\dots,v_m\}$, $\{u_1,\dots,u_n\}$, and $\bm s$, see Section \ref{sec 2.3}.

Define the {\it generating map} by
\[\beta^{\bs s}:\sF^{\bs s}(W_{\bm y})\to \big(\mathbb{P}(\C[x])\big)^{m+n-1}, \quad \sF\mapsto \bs y(\sF)=(y_1(\sF),\dots,y_{m+n-1}(\sF)).\]
The following theorem is our main result of this section.

\begin{thm}\label{thm bijection 3 objs}
For any superflag $\sF\in\sF^{\bm s}(W_{\bm y})$, we have $\beta^{\bm s}(\sF)\in P_{(\bm y,\bm s_0)}^{\bm s}$. Moreover, the generating map $\beta^{\bm s}: \sF^{\bm s}(W_{\bm y})\to P_{(\bm y,\bm s_0)}^{\bm s}$ is a bijection and the complete factorization $\varpi^{\bm s}(\sF)$ of $\cR^{\bm s_0}(\bm y)$ given by \eqref{eq wronski coeff} coincides with $\cR^{\bm s}(\beta^{\bm s}(\sF))$ given by \eqref{eq rational diff oper glmn}.
\end{thm}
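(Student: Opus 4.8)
The plan is to prove, in order: (1) the operator identity $\cR^{\bm s}(\beta^{\bm s}(\sF))=\varpi^{\bm s}(\sF)$; (2) the inclusion $\beta^{\bm s}(\sF)\in P^{\bm s}_{(\bm y,\bm s_0)}$ together with surjectivity of $\beta^{\bm s}$ onto it; (3) injectivity of $\beta^{\bm s}$. Step (1) carries essentially all the content; once it is in place the rest is soft. For (1), fix $\sF\in\sF^{\bm s}(W_{\bm y})$ and a homogeneous basis $\{w_1,\dots,w_{m+n}\}$ generating it, associated (Section \ref{sec 2.3}) to bases $\{v_1,\dots,v_m\}$ of $V_{\bm y}$, $\{u_1,\dots,u_n\}$ of $U_{\bm y}$, and to $\bm s$. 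By Proposition \ref{prop flag factor} and \eqref{eq wronski coeff}, $\varpi^{\bm s}(\sF)=d_1^{s_1}\cdots d_{m+n}^{s_{m+n}}$ with $d_i=1-f_i\tau$ and $f_i=\ln'$ of a ratio of two consecutive discrete Wronskians of the $v$'s and $u$'s. I will rewrite $f_i$ in terms of $y_{i-1}(\sF)$ and $y_i(\sF)$: substituting the definition of $y_{a,b}$, the Wronskian factors reproduce, up to an integer shift of $x$, the numerator and denominator of the ratio in \eqref{eq wronski coeff}, while the auxiliary scalars in $y_{a,b}$ — the polynomial $\pi_{a,b}$, the factor $y_m[a+b]$, and the quotient of shifted $T_j$'s — collapse, between consecutive colours, by Proposition \ref{prop pi} and Theorem \ref{thm relation between T}. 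The upshot should be
\[
f_i=\frac{T_i^{\bm s}\,y_{i-1}(\sF)[-s_i]\,y_i(\sF)[s_i]}{T_i^{\bm s}[s_i]\,y_{i-1}(\sF)\,y_i(\sF)},
\]
which is exactly the coefficient of the $i$-th factor of $\cR^{\bm s}(\beta^{\bm s}(\sF))$ in \eqref{eq rational diff oper glmn}; hence the two operators coincide, and $\cR^{\bm s}(\beta^{\bm s}(\sF))$ is a complete factorization of $\cR^{\bm s_0}(\bm y)=\cR_{P_{(\bm y,\bm s_0)}}$ with parity sequence $\bm s$. (That the intermediate expressions stay genuine polynomials, not merely rational functions, is what Proposition \ref{prop y polynomial} and the standing simple-root hypothesis on $y_m$ secure.)

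\textbf{Step 2 (membership and surjectivity).} For the standard parity $\bm s_0$, $\beta^{\bm s_0}$ sends the superflag $\sF_0$ attached to the distinguished factorization $\cR^{\bm s_0}(\bm y)=\cD_{\bar 0}(\bm y)\cD_{\bar 1}(\bm y)^{-1}$ to $\bm y$ itself (again from the formula for $y_{a,b}$, Proposition \ref{prop pi}, and Proposition \ref{prop y polynomial}), and more generally $\beta^{\bm s_0}$ identifies $\sF^{\bm s_0}(W_{\bm y})\cong\sF(V_{\bm y})\times\sF(U_{\bm y})$ with the part of the population reached from $\bm y$ by bosonic reproductions only, by the even-case statement of \cite{MV03}. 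The key observation is that the elementary move relating two neighbouring complete factorizations — the discrete Wronskian-identity move in the proof of Proposition \ref{prop flag factor} (a bosonic step, parity fixed) and the transposition $\bm s\mapsto\bm s^{[i]}$ from the discussion after Lemma \ref{eq relations diff} (a fermionic step) — corresponds, under Step 1, precisely to the bosonic and fermionic reproduction procedures of Theorem \ref{thm repro pro glmn}, i.e. to \eqref{bosonic rp} and \eqref{fermionic rp}. Since any superflag is joined to $\sF_0$ by a chain of such moves, the matching chain of reproductions applied to $\bm y$ yields $\beta^{\bm s}(\sF)$, so $\beta^{\bm s}(\sF)\in P^{\bm s}_{(\bm y,\bm s_0)}$; conversely every point of $P^{\bm s}_{(\bm y,\bm s_0)}$ is produced this way, so $\beta^{\bm s}$ is onto $P^{\bm s}_{(\bm y,\bm s_0)}$. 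Passage to closures is harmless because $\beta^{\bm s}$ is continuous (it is given by Wronskians) and $P^{\bm s}_{(\bm y,\bm s_0)}$ is closed by definition.

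\textbf{Step 3 (injectivity) and the hard part.} If $\beta^{\bm s}(\sF)=\beta^{\bm s}(\sF')$, then the right-hand side of \eqref{eq rational diff oper glmn} is the same for both, whence $\cR^{\bm s}(\beta^{\bm s}(\sF))=\cR^{\bm s}(\beta^{\bm s}(\sF'))$; by Step 1 this reads $\varpi^{\bm s}(\sF)=\varpi^{\bm s}(\sF')$, and $\varpi^{\bm s}$ is a bijection (the corollary to Proposition \ref{prop flag factor}), so $\sF=\sF'$. Together with Step 2 this proves $\beta^{\bm s}:\sF^{\bm s}(W_{\bm y})\to P^{\bm s}_{(\bm y,\bm s_0)}$ is a bijection, and the equality $\varpi^{\bm s}(\sF)=\cR^{\bm s}(\beta^{\bm s}(\sF))$ is Step 1 itself. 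The real obstacle is Step 1: pushing the $\pi_{a,b}$- and $T$-normalisations built into $y_{a,b}$ through the logarithmic-derivative computation and checking, via Proposition \ref{prop pi} and Theorem \ref{thm relation between T}, that every auxiliary factor telescopes across colours to leave exactly the normalisation $T_i^{\bm s}/T_i^{\bm s}[s_i]$ required by \eqref{eq rational diff oper glmn}. The index bookkeeping is sensitive to the three shapes of the dominant $\overline{\bm A_i\sqcup\bm B_i}$ in Lemma \ref{lem dom}, which is the delicate point; everything else is formal.
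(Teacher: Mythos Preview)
Your proposal is correct and follows the same strategy the paper invokes --- the paper's own proof is simply ``the even case is \cite[Theorem 4.16]{MV03}; the general case is parallel to \cite[Theorem 7.9]{HMVY18} via Theorem \ref{thm relation between T} and Proposition \ref{prop y polynomial}'' --- and your three-step outline (operator identity, membership/surjectivity through the correspondence between factorization moves and reproduction procedures, injectivity via the bijection $\varpi^{\bm s}$) is exactly how that parallel argument unfolds. One small correction of emphasis: the telescoping in Step~1 is governed entirely by Theorem \ref{thm relation between T} (which already packages the $\pi_{a,b}$-ratios into $T_i^{\bm s}$); Lemma \ref{lem dom} and Proposition \ref{prop pi} feed into the proof of Proposition \ref{prop y polynomial}, not into the operator identity itself, so the ``three shapes of the dominant'' are not the obstacle in Step~1 that you suggest.
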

\begin{proof}
Note that the even case of this theorem is proved in \cite[Theorem 4.16]{MV03}. Due to Theorem \ref{thm relation between T} and Proposition \ref{prop y polynomial}, the proof is parallel to that of \cite[Theorem 7.9]{HMVY18}.
\end{proof}

This theorem does not rely on the technical condition imposed above Proposition \ref{prop y polynomial}, see Remark \ref{rem typical poly}.

\subsection{Proof of Proposition \ref{prop y polynomial}}
\label{sec proof}
We prepare several lemmas which will be used in the proof.
\begin{lem}\label{lem skip exp}
For any $v\in V_{\bm y},u\in U_{\bm y}$, the function $T_{m+1}y_m[1]\Wr(v,u)$ is a polynomial. In particular, if $v\in V_{\bm y},u\in U_{\bm y}$ are not regular at $z$, then there exists a $c\in\C$ such that $(u+cv)(z-h)=0$.
\end{lem}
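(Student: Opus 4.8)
The statement to prove is Lemma \ref{lem skip exp}: for $v\in V_{\bm y}$, $u\in U_{\bm y}$, the function $T_{m+1}\,y_m[1]\,\Wr(v,u)$ is a polynomial, and if both $v$ and $u$ fail to be regular at $z$, then some combination $u+cv$ vanishes at $z-h$. The plan is to work locally at a point $z$ and analyze the poles of $v$ and $u$ separately, using the description of $V_{\bm y}$ and $U_{\bm y}$ obtained in Section \ref{sec 6.2}. Recall from there that $y_m\cdot V_{\bm y}$ is an $m$-dimensional space of \emph{polynomials} and $T_{m+1}[-1]\,y_m\cdot U_{\bm y}$ is an $n$-dimensional space of polynomials. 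Hence every $v\in V_{\bm y}$ can have poles only at roots of $y_m$, and every $u\in U_{\bm y}$ can have poles only at roots of $y_m$ or of $T_{m+1}[-1]$. Under the technical condition imposed above Proposition \ref{prop y polynomial}, $y_m$ has only simple roots and is coprime to its shifts, so these poles are all simple and, crucially, well-separated along each $h\Z$-coset.

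First I would show $T_{m+1}\,y_m[1]\,\Wr(v,u)$ is a polynomial. Write $\Wr(v,u)=v\,u[1]-u\,v[1]$ (using $\Wr=\Wr^-$, so $\Wr(v,u)=v(x)u(x-h)-u(x)v(x-h)$, matching the $g_j[-(i-1)]$ convention). A pole of $\Wr(v,u)$ at a point $w$ can come from a pole of $v$ or $u$ at $w$, or from a pole of $v[1]$ or $u[1]$ at $w$, i.e. a pole of $v$ or $u$ at $w-h$. Poles of $v$ lie at roots of $y_m$; poles of $v[1]$ lie at roots of $y_m[1]$. Poles of $u$ lie at roots of $y_m$ or $T_{m+1}[-1]$; poles of $u[1]$ lie at roots of $y_m[1]$ or $T_{m+1}$. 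Multiplying by $T_{m+1}\,y_m[1]$ clears the $y_m[1]$-poles (from $v[1]$ and $u[1]$) and the $T_{m+1}$-pole (from $u[1]$); but it does \emph{not} obviously clear the poles of $v$, $u$ at roots of $y_m$, nor the pole of $u$ at a root of $T_{m+1}[-1]$. So the real content is to show those remaining potential poles actually cancel inside $\Wr(v,u)$. At a root $\beta$ of $y_m$ not in the $h\Z$-orbit of any $z_k$: here $v$ and $u$ each have at most a simple pole, $v[1]$ and $u[1]$ are regular at $\beta$ (since $y_m$ is coprime to $y_m[1]$), and $T_{m+1}[-1]$ is regular and nonzero at $\beta$. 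Writing $v=\alpha_v/(x-\beta)+\cdots$ and $u=\alpha_u/(x-\beta)+\cdots$, the residue of $\Wr(v,u)$ at $\beta$ is $\alpha_v\,u[1](\beta)-\alpha_u\,v[1](\beta)$. To kill this I would argue that $\alpha_v$ and $\alpha_u$ are proportional to a common vector: indeed $y_m\cdot V_{\bm y}$ and $T_{m+1}[-1]y_m\cdot U_{\bm y}$ are polynomial spaces, so $(x-\beta)v\big|_{x=\beta}$ and $(x-\beta)u\big|_{x=\beta}$ are the ``skipped-exponent'' values; the point is that the residues are constrained by the $\cD_{\bar 0},\cD_{\bar 1}$ annihilation. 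I would make this precise by noting that both $v$ and $u$, having a pole at $\beta$, force (via the difference equations $\cD_{\bar 0}v=0$, $\cD_{\bar 1}u=0$ whose coefficients are regular at $\beta$, $\beta-h$ by genericity) a corresponding zero structure, and the residues $\alpha_v,\alpha_u$ are determined up to scalar; choosing $c=-\alpha_u/\alpha_v$ makes $(u+cv)$ regular at $\beta$, hence $(u+cv)(\beta-h)$ picks up the now-regular value — this is exactly the mechanism behind the second assertion. I would treat the root of $T_{m+1}[-1]$ (which is of the form $z_k-h$) analogously: there $v$ is regular, $u$ may have a simple pole, $v[1]$ and $u[1]$ may have poles at that point too if it is a root of $y_m[1]$, but multiplication by $T_{m+1}\,y_m[1]$ handles the shifted factors, and the unshifted $T_{m+1}[-1]$-pole of $u$ is multiplied by $T_{m+1}$ which has a zero at $z_k$, not $z_k-h$; so I would instead observe that $u$'s residue there is again constrained, using that $T_{m+1}[-1]y_m\cdot U_{\bm y}$ is polynomial, forcing cancellation in the combination $v\,u[1]-u\,v[1]$ because $v[1](z_k-h)$... — this case needs care, and is where I expect the bookkeeping to be heaviest.

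For the second assertion, suppose $v\in V_{\bm y}$ and $u\in U_{\bm y}$ are both non-regular at $z$. By the pole analysis, $z$ must be a root of $y_m$ (it is the only common source of poles for both $V_{\bm y}$ and $U_{\bm y}$; a root of $T_{m+1}[-1]$ gives poles only to $U_{\bm y}$). By the simplicity hypothesis on $y_m$, both poles are simple, with residues $\alpha_v,\alpha_u\ne 0$. Set $c=-\alpha_u/\alpha_v\in\C^\times$. Then $u+cv\in U_{\bm y}$... no — $u+cv$ is regular at $z$. I would then invoke the first part of the lemma, or rather its mechanism: since $u+cv$ is regular at $z$ but $v$ is not, $u+cv$ lies in a different stratum; the discrete-exponent description says that a function in $V_{\bm y}$ or $U_{\bm y}$ which is regular at $z$ must vanish at $z-h$ unless its discrete exponent there is $0$. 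Here I would use that $z$ is a root of $y_m$ but (generically, since $z$ need not be a $z_k$) the discrete exponents of $y_m V_{\bm y}$ and of $T_{m+1}[-1]y_m U_{\bm y}$ at $z$ are both $0$ in the sense relevant to $V_{\bm y},U_{\bm y}$ themselves having poles; subtracting the pole moves us ``up one step'', which in the discrete-exponent language is precisely the vanishing $(u+cv)(z-h)=0$. Concretely: $y_m$ vanishes simply at $z$, so $y_m v$ and $y_m u$ are regular at $z$ with $(y_m v)(z)=\alpha_v\,y_m'(z)\cdot(\text{const})$, etc.; regularity of $u+cv$ at $z$ means $y_m\cdot(u+cv)$ vanishes at $z$, i.e. $(u+cv)$ has the pole of $1/y_m$ removed — but then looking at $\cD_{\bar 1}$ or the polynomial structure, $(u+cv)$ must in fact vanish at $z-h$ because the ``next'' exponent is $\ge 1$; this is the shift built into the Wronskian/Casorati setup and into the definition of discrete exponents in Section 6.1. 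I would spell this last step out using $\bm E_z$ directly.

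\textbf{Main obstacle.} The genuinely delicate point is the pole of $u\in U_{\bm y}$ at roots of $T_{m+1}[-1]$ (equivalently at $z_k-h$), which is \emph{not} matched by a pole of $v$ and is \emph{not} cleared by the shifted factor $y_m[1]$ in the prefactor. Showing that $T_{m+1}\cdot\Wr(v,u)$ is nonetheless regular there requires knowing that $v[1]$ vanishes to the right order at $z_k-h$, i.e. that $v(z_k-2h)=0$, which comes from the discrete-exponent computation $\bm E_{z_i}(y_m V_{\bm y})$ recalled in Section \ref{sec 6.2} (the smallest exponent there is $\lambda^{(i)}_m+\lambda^{(i)}_{m+1}\ge 0$, and one must track how the $T_{m+1}[-1]$ shift interacts with it). Matching these valuations precisely — especially disentangling the contributions at the $z_k$-orbits versus the ``extra'' roots of $y_m$ — is where the argument needs the most attention; everything else is a residue computation plus an appeal to the direct-sum/typicality input via Lemma \ref{lem empty cap} if needed to guarantee $v,u$ are genuinely independent.
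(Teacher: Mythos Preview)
Your local residue approach has a genuine gap at the most important point: a root $\beta$ of $y_m$ where \emph{both} $v$ and $u$ have simple poles. There the residue of $\Wr(v,u)=v\,u[1]-u\,v[1]$ is $\alpha_v\,u(\beta-h)-\alpha_u\,v(\beta-h)$, and you need this to vanish. Your justification (``the residues $\alpha_v,\alpha_u$ are determined up to scalar'' via the difference equations, whose coefficients you say are regular at $\beta$) does not hold: the rightmost factor $1-a_m\tau$ of $\cD_{\bar 0}$ and the leftmost factor $1-a_{m+1}\tau$ of $\cD_{\bar 1}$ both have $y_m$ in the denominator of $a_m$, $a_{m+1}$, so they are singular at $\beta$. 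What actually forces the cancellation is the $m$-th Bethe equation at $t_j^{(m)}=\beta$: writing $a_m=C_m/y_m$ and $a_{m+1}=C_{m+1}/y_m$, one checks that $\alpha_v=C_m(\beta)v(\beta-h)/y_m'(\beta)$ and $\alpha_u=C_{m+1}(\beta)u(\beta-h)/y_m'(\beta)$, so the residue is $(C_m(\beta)-C_{m+1}(\beta))\,v(\beta-h)u(\beta-h)$, and $C_m(\beta)=C_{m+1}(\beta)$ is exactly the BAE \eqref{eq BAE XXX} at color $m$ for the root $\beta$. Without invoking this, nothing in the polynomial structure of $y_m V_{\bm y}$ and $T_{m+1}[-1]y_m U_{\bm y}$ alone links the two spaces at $\beta$. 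Your sketch never uses the BAE at color $m$, so the argument as written does not close.

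The paper's proof bypasses all local analysis. It observes that for the distinguished $v=T_m y_{m-1}[-1]/y_m$ and $u=y_{m+1}[-1]/(T_{m+1}[-1]y_m)$ one has the identity
\[
T_{m+1}\,y_m[1]\,\Wr(v,u)=T_{m+1}^{\bm s_0^{[m]}}\,\tilde y_m[-1],
\]
which is a repackaging of the fermionic reproduction \eqref{fermionic rp} at $i=m$; hence the left side is a polynomial. For general $v,u$ one reaches them by bosonic reproductions in directions $\ne m$ (which leave $y_m$ fixed), and Theorem \ref{thm repro pro glmn} guarantees $\tilde y_m$ stays polynomial for generic choices; polynomiality then extends by continuity. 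The second assertion is deduced from the first by exactly the residue-matching you describe. So your overall strategy is orthogonal to the paper's: you attempt a direct local proof, whereas the paper uses the reproduction machinery as a global algebraic identity. Incidentally, the case you flag as the ``main obstacle'' (poles of $u$ at roots of $T_{m+1}[-1]$) is actually the easier one: those roots are $z_k+jh$ with $0\le j\le \la_{m+1}^{(k)}-1$, and for $j\ge 1$ the factor $T_{m+1}$ already clears them, while for $j=0$ the needed vanishing $v(z_k-h)=0$ follows from the exponent computation in Section \ref{sec 6.2} together with the polynomial-weight constraint $\la_{m+1}^{(k)}\ge 1\Rightarrow \la_m^{(k)}\ge 1$.
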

\begin{proof}
The case of $\gl_{1|1}$ is clear. Now we assume that either $m\gge 2$ or $n\gge 2$.

If the fermionic reproduction in the $m$-th direction is not applicable, then we can slightly change $y_{m-1}$ or $y_{m+1}$ using bosonic reproduction procedure such that the fermionic reproduction in the $m$-th direction can be applied to the new tuple of polynomials $\tl{\bm y}$. Therefore we can assume that the fermionic reproduction in the $m$-th direction is applicable to $\bm y$ at the beginning.

It follows from \eqref{eq wronski coeff} and Theorem \ref{thm repro pro glmn} that
\[
T_{m+1}y_m[1]\Wr(v,u)=T_{m+1}^{\bm s_0^{[m]}}\tl y_{m}[-1].
\]
Here $\tl y_m$ depends on $u$ and $v$. 

Initially, we have $v(\bm y)=T_{m}y_{m-1}[-1]/y_m$ and $u(\bm y)=y_{m+1}[-1]/(T_{m+1}[-1]y_m)$. Generic $u$ and $v$ can be obtained from $\bm y$ using only bosonic reproduction procedures. Moreover, the polynomial $y_m$ never changes. Note that, by Theorem \ref{thm repro pro glmn}, $\tl y_m$ is a polynomial for generic $u$ and $v$. Therefore the first part of the lemma follows.

Recall that $y_m$ has only simple zeros and $y_m$ is relatively prime to $y_m[1]$. In addition, none of zeros of $y_m$ belongs to the sets $z_k+h\Z$, $k=1,\dots,p$. If $v\in V_{\bm y},u\in U_{\bm y}$ are not regular at $z$, then $z$ is a root of $y_m$. Moreover, $v$ and $u$ have simple pole at $x=z$. The second statement follows directly from the first statement.
\end{proof}

Suppose $V$ is an $r$-dimensional space of polynomials with the sequence of discrete exponents at $z$ given by $c_r<c_{r-1}+1<\dots<c_{r-i}+i<\dots<c_1+r-1$. Let $\mathscr T_i(x)=(x-z+h)\cdots(x-z+c_{i}h)$, $i=1,\dots,r$. 

The following lemma is well-known, see e.g. \cite[Theorem 3.3]{MTV3}.
\begin{lem}\label{lem divisible}
Let $f_1,\dots,f_i\in V$, then $\Wr(f_1,\dots,f_i)$ is divisible by $\mathscr \prod_{j=1}^{i}\mathscr T_{r+1-j}[i-j]$.\qed
\end{lem}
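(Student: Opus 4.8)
The plan is, beyond simply citing \cite[Theorem 3.3]{MTV3}, to reduce to an adapted basis and then estimate the Leibniz expansion of the discrete Wronskian.

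First I would fix a basis $g_1,\dots,g_r$ of $V$ adapted to $z$, meaning that $g_j$ is divisible by $\prod_{k=1}^{e_j}(x-z+kh)$ where $e_1<e_2<\dots<e_r$ is the increasing list of discrete exponents of $V$ at $z$; in the notation of the statement $e_j=c_{r-j+1}+j-1$, so $\mathscr T_{r+1-j}(x)=\prod_{k=1}^{e_j-j+1}(x-z+kh)$. Since $\Wr(f_1,\dots,f_i)=\det\big(f_b(x-(a-1)h)\big)_{a,b=1}^{i}$ is multilinear and alternating in $f_1,\dots,f_i$, expanding each $f_b$ in the $g_\bullet$ writes $\Wr(f_1,\dots,f_i)$ as a $\C$-linear combination of the Wronskians $\Wr(g_{j_1},\dots,g_{j_i})$ over $i$-element sets $j_1<\dots<j_i$. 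Since $j_l\ge l$ forces $e_{j_l}\ge e_l$, it suffices to treat $\Wr(g_1,\dots,g_i)$, the divisor produced below for a general index set being only larger.

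Next I would Leibniz-expand $\Wr(g_1,\dots,g_i)$: each term is, up to sign, $\prod_{a=1}^{i}g_{\sigma(a)}(x-(a-1)h)$ with $\sigma\in\mathfrak S_i$, and $g_l(x-(a-1)h)$ is divisible by $\prod_{k=1}^{e_l}(x-z+(k-a+1)h)$. Therefore $\Wr(g_1,\dots,g_i)$ is divisible by the polynomial whose order of vanishing at $z-\nu h$ equals $\min_{\rho\in\mathfrak S_i}\#\{l:\ 2-\nu\le\rho(l)\le e_l-\nu+1\}$ (set $l=\sigma(a)$, $\rho=\sigma^{-1}$). A direct computation from the definitions gives that the order of vanishing of $\prod_{j=1}^{i}\mathscr T_{r+1-j}[i-j]$ at $z-\nu h$ is $\#\{l\le i:\ l\le\nu+i-1,\ e_l\ge\nu+i-1\}$. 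So the lemma reduces to the combinatorial inequality
\[
\min_{\rho\in\mathfrak S_i}\#\{l:\ 2-\nu\le\rho(l)\le e_l-\nu+1\}\ \ge\ \#\{l\le i:\ l\le\nu+i-1,\ e_l\ge\nu+i-1\},\qquad \nu\in\Z.
\]

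This inequality is the only real obstacle, and is where I would concentrate. I would put $u=\nu+i-1$ and assume the right-hand set $T=\{l:\ l\le u,\ e_l\ge u\}$ is non-empty; since the $e_\bullet$ increase, $\{l:\ e_l\ge u\}=\{l_0,\dots,i\}$ for some $l_0$, so $T=\{l_0,\dots,\min(i,u)\}$ and $|T|=\min(i,u)-l_0+1$. For any $\rho\in\mathfrak S_i$ and any $l$ with $e_l\ge u$ the upper bound $\rho(l)\le e_l-\nu+1=e_l+i-u$ holds automatically ($e_l+i-u\ge i\ge\rho(l)$), so such $l$ contribute to the left-hand count exactly when $\rho(l)\ge i+1-u$. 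Among the $i-l_0+1$ distinct values $\rho(l_0),\dots,\rho(i)$, at most $\max(i-u,0)$ can lie in $\{1,\dots,i-u\}$, so at least $(i-l_0+1)-\max(i-u,0)=|T|$ of them satisfy $\rho(l)\ge i+1-u=2-\nu$; when $u>i$ this constraint is vacuous and the bound is immediate. This proves the inequality, hence the lemma. I would also note that only the divisibilities of the $g_j$ are used — never simplicity of their zeros — so the statement holds with no genericity hypothesis.
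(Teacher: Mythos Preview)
Your argument is correct. The reduction to an adapted basis via multilinearity is clean, the Leibniz-expansion count is set up properly, and the combinatorial inequality is proved by a valid pigeonhole: once $T=\{l_0,\dots,\min(i,u)\}$ is identified, the observation that among the $i-l_0+1$ distinct values $\rho(l_0),\dots,\rho(i)$ at most $\max(i-u,0)$ can fall below $i+1-u$ gives exactly the bound needed. The monotonicity $e_{j_l}\ge e_l$ indeed makes the general index set only easier.

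The paper itself does not prove this lemma at all: it simply cites \cite[Theorem~3.3]{MTV3} and places a \qed. So your route is genuinely different in that it is self-contained. The tradeoff is that the cited reference packages the statement as part of a broader structural result about spaces of quasi-exponentials and their Wronskians, so invoking it is shorter and situates the lemma in context; your direct argument, on the other hand, is elementary, needs no outside input, and makes explicit that only the divisibility of the adapted basis elements (not any genericity or simplicity of zeros) is required. Both are perfectly adequate for the paper's purposes.
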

\begin{proof}[Proof of Proposition \ref{prop y polynomial}]
	Clearly, we only need to consider the case when $v_1,\dots,v_{a},u_1,\dots,u_b$ are linearly independent. The rational function $y_{a,b}$ can only have poles at $z_i+h\Z$, $1\lle i\lle p$, and at zeros of the product of polynomials $\prod_{j=1}^{a+b}y_{m}[j]$.
	
	Denote by $W_{a,b}$ the space of polynomials spanned by $\tl v_j:=T_{m+1}[-1]y_mv_j,\tl u_k:=T_{m+1}[-1]y_mu_k$, $1\lle j\lle a$ and $1\lle k\lle b$, then $\bm E_{\tl z_i}(W_{a,b})$ dominates $\overline{\bm A_i\sqcup \bm B_i}$, where $\tl z_i=z_i+\la_{m+1}^{(i)}h$. Therefore it follows from Lemma \ref{lem divisible} that $\Wr(\tl v_1,\dots,\tl v_a,\tl u_1,\dots,\tl u_b)$ is divisible by $\prod_{j=1}^{a+b}\mathscr T_{j}[j-1]$, where $\mathscr T_{j}$ are defined in \eqref{eq poly T divisible}. It follows from Proposition \ref{prop pi} that the function $y_{a,b}$ is regular at $z_i+h\Z$, $1\lle i\lle p$.
	
	Write $y_m=\prod_{i=1}^r(x-z'_i+h)$, then by assumption $z'_i-z'_j\notin h\Z$ for $1\lle i<j\lle r$. It follows from \cite[Corollary 7.5]{MTV2} that $\bm E_{z'_i}(\mathrm{span}\langle \tl v_1,\dots,\tl v_a\rangle)$ dominates the partition $(0<2<3<\dots<a)$ with $a$ parts and $\bm E_{z'_i}(\mathrm{span}\langle \tl u_1,\dots,\tl u_b\rangle)$ dominates the partition $(0<2<3<\dots<b)$ with $b$ parts. Therefore it follows from Lemma \ref{lem skip exp} that $\bm E_{z'_i}(W_{a,b})$ dominates the partition $(0<2<3<\dots<a+b)$ with $a+b$ parts. Hence, by Lemma \ref{lem divisible}, $\Wr(\tl v_1,\dots,\tl v_a,\tl u_1,\dots,\tl u_b)$ is divisible by $\prod_{j=2}^{a+b}y_m[j-2]$. In particular, $\Wr(v_1,\dots,v_a,u_1,\dots,u_b)y_m[a+b-1]$ is regular at zeros of the product of polynomials $\prod_{j=1}^{a+b}y_{m}[j-1]$.
\end{proof}
\begin{rem}\label{rem typical poly}
If $\bla$ is typical, the proof of Proposition \ref{prop y polynomial} can be simplified as follows. Since $\bla$ is typical, generically the reproduction procedure is applicable for all parity sequences and all directions. Therefore, it follows from Theorem \ref{thm repro pro glmn} that $y_{a,b}$ is a polynomial for generic $v_1,\dots,v_a$, $u_1,\dots,u_b$. Hence $y_{a,b}$ is a polynomial for all $v_1,\dots,v_a$, $u_1,\dots,u_b$.\qed
\end{rem}

\section{Quasi-periodic Case}
\label{sec quasi}
In this section, we generalize our results to the quasi-periodic case.
\subsection{Twisted transfer matrix and Bethe ansatz}
We follow the notation in Section \ref{sec super XXX bae}.

Let $\bm\ka=(\ka_1,\dots, \ka_{m+n})$ be a sequence of complex numbers such that $e^{h\ka_i}\ne e^{h\ka_j}$ for $1\lle i<j\lle m+n$. Let $Q_{\bka}$ be the diagonal matrix $\mathrm{diag}(e^{h\ka_1 },\dots,e^{h\ka_{m+n}})$. Define the twisted transfer matrix $\cT_{\bka}(x)$ by
\[
\cT_{\bka}(x)=\mathrm{str}(Q_{\bka}\mc L(x))=\sum_{i=1}^{m+n}(-1)^{|i|}e^{h\ka_i}\mc L_{ii}(x).
\]
It is known that the twisted transfer matrices commute, $[\mc T_{\bka}(x_1),\mc T_{\bka}(x_2)]=0$. Moreover, $\cT_{\bka}(x)$ commutes with the subalgebra $\mathrm{U}(\h)$.

The Bethe ansatz equation associated to $\bm s$, $\bm z$, $\bla$, $\bka$, and $\bm l$ is a system of algebraic equations in variables $\bm t$:
\begin{align}
e^{h(\ka_i-\ka_{i+1})}\prod_{k=1}^p\frac{t_j^{(i)}-z_k+s_i\la_i^{(k,\bm s)}h}{t_j^{(i)}-z_k+s_{i+1}\la_{i+1}^{(k,\bm s)}h}&\prod_{r=1}^{l_{i-1}}\frac{t_j^{(i)}-t_r^{(i-1)}+s_ih}{t_j^{(i)}-t_r^{(i-1)}} \nonumber\\\times &\prod_{\substack{r=1 \\ r\ne j}}^{l_{i}}\frac{t_j^{(i)}-t_r^{(i)}-s_ih}{t_j^{(i)}-t_r^{(i)}+s_{i+1}h}
\prod_{r=1}^{l_{i+1}}\frac{t_j^{(i)}-t_r^{(i+1)}}{t_j^{(i)}-t_r^{(i+1)}-s_{i+1}h}=1,\label{twisted bae}
\end{align}
where $i=1,\dots,m+n-1$, $j=1,\dots,l_i$.

After making cancellations as in \eqref{eq cancel}, we require the solutions do not make the remaining denominators in \eqref{twisted bae} vanish.

We also impose the same condition, see Section \ref{sec super XXX bae}, for variables which correspond to a simple odd root of the same color. Suppose $(\alpha_i^{\bm s},\alpha_i^{\bm s})=0$ for some $i$. Consider the BAE for $\bm t$ related to $t_j^{(i)}$ with all $t_b^{(a)}$ fixed, where $a\ne i$ and $1\lle b\lle l_a$, this equation does not depend on $j$. Let $t_0^{(i)}$ be a solution of this equation with multiplicity $r$. Then we require that the number of $j$ such that $t_j^{(i)}=t_0^{(i)}$ is at most $r$, c.f. Theorem \ref{Repro twisted}.

Suppose that $\bm{\lambda}$ is a sequence of polynomial $\glMN$ weights and $\bs t$ a solution of the BAE \eqref{twisted bae} associated to $\bs s$, $\bs z$, $\bm\lambda$, $\bka$, and $\bm{l}$. Similar to Theorem \ref{thm BR 08}, see \cite{BR2}, if the vector $w^{\bs s}(\bs t,\bs z)\in L(\bla,\bm z)$ is well-defined and non-zero, then $w^{\bs s}(\bs t,\bs z)\in L(\bla,\bm z)$ is an eigenvector of twisted transfer matrix, $\mc T_{\bka}(x)w^{\bs s}(\bs t,\bs z)=\mc E_{\bka}(x)w^{\bs s}(\bs t,\bs z)$, where the eigenvalue $\mc E_{\bka}(x)$ is given by
\be
\mc E_{\bka}(x)=\sum_{a=1}^{m+n}s_a\,e^{h\ka_a}\,\prod_{k=1}^p\frac{x-z_k+s_a\la_a^{(k,\bm s)}h}{x-z_k}\prod_{j=1}^{l_{a-1}}\frac{x-t_j^{(a-1)}+s_ah}{x-t_{j}^{(a-1)}}\prod_{j=1}^{l_a}\frac{x-t_j^{(a)}-s_ah}{x-t_{j}^{(a)}}\ .
\ee

Let $\bm y=(y_1,\dots,y_{m+n-1})$ be a sequence of polynomials representing the solution $\bm t$, then
\[
\mc E_{\bka}(x)=\mc E_{(\bm y,\bka)}(x)=\sum_{a=1}^{m+n}s_a\,e^{h\ka_a}\,\frac{T_a^{\bm s}}{T_a^{\bm s}[s_a]}\frac{y_{a-1}[-s_a]}{y_{a-1}}\frac{y_a[s_a]}{y_a}\,.
\]

\subsection{Reproduction procedure and rational difference operators}
Recall the notation given at the beginning of Section \ref{subsec glmn rep pro}.
Set $\bka^{[i]}=(\ka_1,\dots,\ka_{i+1},\ka_i,\dots,\ka_{m+n})$. 

\begin{thm}\label{Repro twisted}
	Let $\bm{y}=(y_1,\dots,y_{m+n-1})$ be a sequence of polynomials generic with respect to $\bm{s}$, $\bm{\lambda}$, and $\bm{z}$, such that $\deg y_k=l_k$, $k=1,\dots,m+n-1$.
	\begin{enumerate}
		\item The sequence $\bm{y}$ represents a solution of the BAE \eqref{twisted bae} associated to $\bm{s}$, $\bm{z}$, $\bm{\lambda}$, $\bka$, and $\bm{l}$, if and only if for each $i=1,\dots,m+n-1$, there exists a unique polynomial $\tl{y}_i$, such that
		\begin{align}
		&\Wr^{s_i}\left(y_i,e^{(\ka_i-\ka_{i+1}) x}\tl{y}_i\right)=e^{(\ka_i-\ka_{i+1}) x}T^{\bm{s}}_i\left(T^{\bm{s}}_{i+1}\right)^{-1}y_{i-1}[-s_i]y_{i+1}, &\mbox{if}\;s_i=s_{i+1},\label{twisted brp}\\
		&y_i\,\tl{y}_i[-s_i]=e^{h\ka_i }\varphi_i^{\bm s}y_{i-1}[-s_i]y_{i+1}-e^{h\ka_{i+1}}\psi_{i}^{\bm s}y_{i-1}y_{i+1}[-s_i], &\mbox{if}\;s_i\neq s_{i+1}.\label{twisted frp}
		\end{align}
		\item  If $\bm{y}^{[i]}=(y_1,\dots,\tl{y}_i,\dots,y_{m+n-1})$ 
		is  generic with respect to $\bm{s}^{[i]}$, $\bm{\lambda}$, and $\bm{z}$, then $\bm{y}^{[i]}$ represents a solution of the BAE \eqref{twisted bae} associated to $\bm{s}^{[i]}$, $\bm{\lambda}$, $\bka^{[i]}$, $\bm{z}$, and $\bm{l}^{[i]}$, where
		$\bm{l}^{[i]}=(l_1,\dots,\tl{l}_i,\dots,l_{m+n-1})$, $\tl{l}_i=\deg \tl{y}_i$.
	\end{enumerate}
\end{thm}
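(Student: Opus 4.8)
The plan is to rerun the proof of Theorem~\ref{thm repro pro glmn}, carrying the exponential twists $e^{h\ka_i}$ along; the only genuinely new input is a quasi-periodic version of the rank-one reproduction.

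\emph{Part (i).} The assertion is local in the color $i$: with all variables of colors $r\ne i$ fixed, equation~\eqref{twisted bae} for the variables $t_j^{(i)}$ reduces, after the cancellations~\eqref{eq cancel}, to a quasi-periodic $\gl_2$ Bethe ansatz equation when $s_i=s_{i+1}$ and to a quasi-periodic $\gl_{1|1}$ Bethe ansatz equation when $s_i\ne s_{i+1}$, with inhomogeneities built from $T_i^{\bs s}$, $T_{i+1}^{\bs s}$ and the neighbouring polynomials $y_{i-1}$, $y_{i+1}$. One therefore needs the twisted analogues of Lemma~\ref{lem gl2 populations} (which is contained in \cite{MV2}) and Lemma~\ref{lemma gl1|1 repro pro}. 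In the bosonic case, setting $\mu=\ka_i-\ka_{i+1}$ and pulling the factor $e^{\mu x}$ out of both sides of~\eqref{twisted brp} turns it into the scalar difference identity $e^{\mu s_i h}\,y_i\,\tl y_i[-s_i]-\tl y_i\,y_i[-s_i]=T_i^{\bs s}(T_{i+1}^{\bs s})^{-1}y_{i-1}[-s_i]y_{i+1}$; evaluating at the roots of $y_i$ shows a polynomial $\tl y_i$ exists if and only if the color-$i$ part of~\eqref{twisted bae} holds, exactly as in the untwisted case. In the fermionic case~\eqref{twisted frp} directly expresses $\tl y_i[-s_i]$, hence $\tl y_i$, and solvability in polynomials is again equivalent to the color-$i$ equation. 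Uniqueness of $\tl y_i$ — which is new compared with Theorem~\ref{thm repro pro glmn} — follows because the difference of two solutions solves the homogeneous version of the identity above, and comparing leading coefficients there forces $e^{h(\ka_i-\ka_{i+1})}=1$, contradicting the hypothesis $e^{h\ka_i}\ne e^{h\ka_{i+1}}$.

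\emph{Part (ii).} I would mirror the proof of Theorem~\ref{thm repro pro glmn}(ii). Equation~\eqref{twisted bae} for $\bm y^{[i]}$ attached to variables of color $r$ with $|r-i|>1$ is literally the one for $\bm y$, since neither the rational part nor the twist $e^{h(\ka_r-\ka_{r+1})}$ is affected by transposing positions $i$ and $i+1$. The color-$i$ equation holds by Part (i) applied to $\bm s^{[i]}$ and $\bka^{[i]}$. For the color-$(i-1)$ and color-$(i+1)$ equations one substitutes suitable shifts of the roots of $y_{i-1}$, resp.\ $y_{i+1}$, into the reproduction relations~\eqref{twisted brp}/\eqref{twisted frp} for the indices $i$ and $i-1$, resp.\ $i$ and $i+1$, and divides, exactly as in the derivation of~\eqref{i-1 bae}--\eqref{exp 2}. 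The extra factors $e^{h\ka_i}$, $e^{h\ka_{i+1}}$ in~\eqref{twisted frp} (and $e^{(\ka_i-\ka_{i+1})x}$ in~\eqref{twisted brp}) then recombine, after these substitutions, into the single factor $e^{h(\ka_{i-1}-\ka_{i+1})}$, resp.\ $e^{h(\ka_i-\ka_{i+2})}$ — precisely the twist that $\bka^{[i]}$ assigns to the color-$(i-1)$, resp.\ color-$(i+1)$, equation. As in the untwisted proof there are a few sub-cases according to the signs of $s_{i-1},s_i,s_{i+1},s_{i+2}$, handled in the same way.

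The conceptual structure is identical to the untwisted case, so the real work is bookkeeping: verifying that in each sub-case the powers of $e^{h\ka_\bullet}$ produced by the substitutions assemble into exactly the twist prescribed by $\bka^{[i]}$, on top of the identity among the $\varphi_\bullet^{\bs s}$, $\psi_\bullet^{\bs s}$ and $T_\bullet^{\bs s}$ that already occurs in the proof of Theorem~\ref{thm repro pro glmn}. The other point that needs care is the uniqueness clause, which rests on the non-existence of a nonzero polynomial solution of the homogeneous twisted difference equation and hence on the standing assumption $e^{h\ka_i}\ne e^{h\ka_j}$.
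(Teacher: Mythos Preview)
Your proposal is correct and follows the same strategy as the paper's proof: the paper cites \cite[Theorem~7.4]{MV2} for the bosonic relation~\eqref{twisted brp} and then says the fermionic case and Part~(ii) are proved ``similarly to Theorem~\ref{thm repro pro glmn}'', which is exactly the reduction-to-rank-one plus bookkeeping argument you outline. Your explicit treatment of the uniqueness clause via the homogeneous equation and the hypothesis $e^{h\ka_i}\ne e^{h\ka_{i+1}}$ is a point the paper leaves implicit, but it is the right justification.
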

\begin{proof}
	For part (i), the case of \eqref{twisted brp} is proved in \cite[Theorem 7.4]{MV2}. The proofs of \eqref{twisted frp} in part (i) and part (ii) are similar to that of Theorem \ref{thm repro pro glmn}.
\end{proof}

Thanks to Theorem \ref{Repro twisted}, we define similarly the twisted bosonic and fermionic reproduction procedures in $i$-th direction, the twisted $\glMN$ population $P(\bm y,\bka)$ of solutions of the BAE associated to $\bm{s}$, $\bm{z}$, $\bm{\lambda}$, originated at $(\bm y,\bka)$. Here the reproduction procedure in $i$-th direction sends $(\bm y,\bka)$ to $(\bm y^{[i]},\bka^{[i]})$. Note that for both twisted bosonic and fermionic reproduction procedures, the sequence $\bka$ is changed to $\bka^{[i]}$.

Define a rational difference operator $\cR^{\bm{s}}(\bs y,\bka)$ over $\bK=\C(x)$, 
\begin{equation}
\cR^{\bm{s}}(\bs y,\bka)=\mathop{\overrightarrow\prod}\limits_{1\lle i\lle m+n}\Big(1-e^{h\ka_i}\,\frac{T_i^{\bm s}y_{i-1}[-s_i]y_i[s_i]}{T_i^{\bm s}[s_i]y_{i-1}y_i}\, \tau\Big)^{s_i}.
\end{equation}

\begin{thm}
	Let $P$ be a twisted $\gl_{m|n}$ population. Then the rational difference operator $\cR^{\bm{s}}(\bm y,\bka)$ does not depend on a choice of $(\bm y,\bka)$ in $P$.
\end{thm}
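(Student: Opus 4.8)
The plan is to repeat the argument of Theorem~\ref{thm diffoper inv} with the exponential twists inserted. A twisted reproduction in the $i$-th direction sends $(\bm y,\bka)$ to $(\bm y^{[i]},\bka^{[i]})$, changing only the $i$-th polynomial and swapping $\ka_i\leftrightarrow\ka_{i+1}$; all factors of $\cR^{\bm{s}}(\bs y,\bka)$ with index $\ne i,i+1$ are visibly unchanged, so it suffices to establish the two-factor identity
\begin{align*}
&\Big(1-e^{h\ka_i}\,\frac{T_i^{\bm s}y_{i-1}[-s_i]y_i[s_i]}{T_i^{\bm s}[s_i]y_{i-1}y_i}\,\tau\Big)^{s_i}\Big(1-e^{h\ka_{i+1}}\,\frac{T_{i+1}^{\bm s}y_{i}[-s_{i+1}]y_{i+1}[s_{i+1}]}{T_{i+1}^{\bm s}[s_{i+1}]y_{i}y_{i+1}}\,\tau\Big)^{s_{i+1}}\\
=\ &\Big(1-e^{h\ka_{i+1}}\,\frac{T_i^{\bm{s}^{[i]}}y_{i-1}[-s_{i+1}]\tl y_i[s_{i+1}]}{T_i^{\bm{s}^{[i]}}[s_{i+1}]y_{i-1}\tl y_i}\,\tau\Big)^{s_{i+1}}\Big(1-e^{h\ka_i}\,\frac{T_{i+1}^{\bm{s}^{[i]}}\tl y_{i}[-s_i]y_{i+1}[s_i]}{T_{i+1}^{\bm{s}^{[i]}}[s_i]\tl y_{i}y_{i+1}}\,\tau\Big)^{s_i},
\end{align*}
where $\tl y_i$ is the polynomial produced by \eqref{twisted brp} or \eqref{twisted frp}; the coefficients $e^{h\ka^{[i]}_i}=e^{h\ka_{i+1}}$ and $e^{h\ka^{[i]}_{i+1}}=e^{h\ka_i}$ on the right reflect the swap in $\bka^{[i]}$. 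As in Theorem~\ref{thm diffoper inv} this splits into four cases according to $(s_i,s_{i+1})=(\pm1,\pm1)$.

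For the bosonic cases $s_i=s_{i+1}$ we have $\bm{s}^{[i]}=\bm s$, so all $T^{\bm{s}^{[i]}}_j=T^{\bm s}_j$ and the identity is the twisted analogue of Lemma~\ref{lem eigen gl2}. Both sides are products of two first-order difference operators (after clearing the inverse when $s_i=-1$); their top-degree coefficients are manifestly independent of $y_i$ and symmetric in $\ka_i,\ka_{i+1}$, hence already agree, and the remaining coefficient is governed by the twisted eigenvalue $\mc E_{(\bm y,\bka)}$. Dividing \eqref{twisted brp} by $y_i\tl y_i$ and using $\ln'(e^{(\ka_i-\ka_{i+1})x}f)=e^{h(\ka_i-\ka_{i+1})}\ln'(f)$, the honest exponential $e^{(\ka_i-\ka_{i+1})x}$ in the reproduction relation cancels against the constants $e^{h\ka_i},e^{h\ka_{i+1}}$ exactly as in the untwisted proof, yielding $\mc E_{(\bm y,\bka)}=\mc E_{(\bm y^{[i]},\bka^{[i]})}$; the auxiliary $T$-identities needed are the same ones already used for Theorem~\ref{thm diffoper inv}.

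For the fermionic cases $s_i=-s_{i+1}$ this is the twisted analogue of Lemma~\ref{lem 4.5}. Since $e^{h\ka}\in\C^\times$, every coefficient above lies in $\bK$, so Lemma~\ref{eq relations diff} applies verbatim. Taking $s_i=-s_{i+1}=1$ (the other sign being symmetric), I would write the left side as $(1-a\tau)(1-b\tau)^{-1}$ and the right side as $(1-c\tau)^{-1}(1-d\tau)$ with $a,b,c,d$ read off from the display, and check the conditions $c=b[1]\ln'(a-b)$, $d=a[1]\ln'(a-b)$ of Lemma~\ref{eq relations diff} (here $a\ne b$ and $c\ne d$ by the genericity of $\bm y$, resp.\ $\bm y^{[i]}$). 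Computing $a-b$ via \eqref{twisted frp} together with the identity
\[
\frac{T_i^{\bm{s}^{[i]}}}{T_i^{\bm{s}^{[i]}}[-s_i]}\,\frac{T_{i+1}^{\bm s}}{T_{i+1}^{\bm s}[s_i]}=\prod_{\substack{k=1\\ \la_i^{(k,\bm s)}+\la_{i+1}^{(k,\bm s)}\ne0}}^p\frac{x-z_k-h}{x-z_k}
\]
(already exploited in Theorem~\ref{thm diffoper inv}) reduces everything to a single rational expression, and the factor $e^{h(\ka_i-\ka_{i+1})}$ generated by $\ln'$ of the shift is exactly what interchanges $\ka_i$ and $\ka_{i+1}$, matching the labels of $\bka^{[i]}$.

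The computations are mechanical; the only genuinely new point relative to Theorem~\ref{thm diffoper inv} is the bookkeeping of the three exponential factors — the constants $e^{h\ka_i},e^{h\ka_{i+1}}$ in the operator coefficients and the function $e^{(\ka_i-\ka_{i+1})x}$ in \eqref{twisted brp}/\eqref{twisted frp} — and the observation that $\ln'(e^{(\ka_i-\ka_{i+1})x}f)=e^{h(\ka_i-\ka_{i+1})}\ln'(f)$ produces precisely the constant needed to reconcile the swapped labels of $\bka^{[i]}$. Accordingly I would spell out one bosonic and one fermionic case in detail and leave the remaining two to the reader, as is done for Theorem~\ref{thm diffoper inv}.
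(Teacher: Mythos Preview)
Your proposal is correct and matches the paper's own approach exactly: the paper's proof consists of the single line ``The proof is similar to that of Theorem~\ref{thm diffoper inv},'' and you have carried out precisely that adaptation, reducing to the same two-factor identity and treating the four sign cases via the twisted analogues of Lemmas~\ref{lem eigen gl2} and~\ref{lem 4.5} together with Lemma~\ref{eq relations diff}. One small cosmetic point: in the fermionic relation~\eqref{twisted frp} only the constants $e^{h\ka_i},e^{h\ka_{i+1}}$ appear (no genuine $e^{(\ka_i-\ka_{i+1})x}$), so there the exponential bookkeeping is even simpler than you suggest---the constants in $b[1]$ and $c$ (resp.\ $a[1]$ and $d$) already agree without invoking $\ln'$.
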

\begin{proof}
	The proof is similar to that of Theorem \ref{thm diffoper inv}.
\end{proof}

\begin{prop}
	Let $\bs y=(y_1,\dots,y_{m+n-1})$ be a sequence of polynomials such that there exists a sequence of polynomials $\bs y^{[i]}=(y_1,\dots,\tl y_i,\dots,y_{m+n-1})$
	satisfying \eqref{twisted brp} if $s_i=s_{i+1}$ or \eqref{twisted frp} if $s_i=-s_{i+1}$.  Then $\mc E_{(\bm y,\bka)}(x)=\mc E_{(\bm y^{[i]},\bka^{[i]})}(x)$.
\end{prop}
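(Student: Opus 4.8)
The plan is to follow the pattern of the proofs of Lemmas~\ref{lem eigen gl2} and~\ref{lem gl11 eigen}, organizing the computation around the factorization \eqref{eq rational diff oper glmn}. First I would record that the eigenvalue is the signed sum of the first-order coefficients of the factors of $\cR^{\bm s}(\bm y,\bka)$: writing the $a$-th factor of $\cR^{\bm s}(\bm y,\bka)$ as $(1-A_a\tau)^{s_a}$ with $A_a=e^{h\ka_a}T_a^{\bm s}y_{a-1}[-s_a]y_a[s_a]\big(T_a^{\bm s}[s_a]y_{a-1}y_a\big)^{-1}$, comparison with the displayed formula for $\mc E_{(\bm y,\bka)}$ gives $\mc E_{(\bm y,\bka)}(x)=\sum_{a=1}^{m+n}s_aA_a$. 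Next I would observe that passing from $(\bm y,\bka,\bm s)$ to $(\bm y^{[i]},\bka^{[i]},\bm{s}^{[i]})$ leaves the factors in all positions $j\neq i,i+1$ literally unchanged: one has $T_j^{\bm{s}^{[i]}}=T_j^{\bm s}$ and $y^{[i]}_j=y_j$ for $j\neq i$, and the $j$-th factor of \eqref{eq rational diff oper glmn} involves only $T_j^{\bm s}$, $y_{j-1}$, $y_j$. Hence everything reduces to showing that the two-term partial sum $s_iA_i+s_{i+1}A_{i+1}$ is unchanged by the reproduction in the $i$-th direction.

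For this I would use that, by the preceding theorem (or directly from \eqref{twisted brp}, resp.\ \eqref{twisted frp}), the reproduction replaces the two-factor block $(1-A_i\tau)^{s_i}(1-A_{i+1}\tau)^{s_{i+1}}$ of $\cR^{\bm s}(\bm y,\bka)$ by $(1-\widetilde A_i\tau)^{s_{i+1}}(1-\widetilde A_{i+1}\tau)^{s_i}$ --- the same element of $\bK(\tau)$ --- where $\widetilde A_i,\widetilde A_{i+1}$ are the first-order coefficients of the $i$-th and $(i+1)$-th factors of $\cR^{\bm{s}^{[i]}}(\bm y^{[i]},\bka^{[i]})$. If $s_i=s_{i+1}$, expanding $(1-A_i\tau)(1-A_{i+1}\tau)=1-(A_i+A_{i+1})\tau+A_iA_{i+1}[1]\tau^2$ (and, when $s_i=s_{i+1}=-1$, comparing the inverses of the two blocks) yields $A_i+A_{i+1}=\widetilde A_i+\widetilde A_{i+1}$, and multiplying by $s_i$ gives the claim. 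If $s_i=-s_{i+1}$, say $s_i=1$, the block equality reads $(1-A_i\tau)(1-A_{i+1}\tau)^{-1}=(1-\widetilde A_i\tau)^{-1}(1-\widetilde A_{i+1}\tau)$, so Lemma~\ref{eq relations diff} gives $\widetilde A_i=A_{i+1}[1]\,\ln'(A_i-A_{i+1})$ and $\widetilde A_{i+1}=A_i[1]\,\ln'(A_i-A_{i+1})$, whence
\[
s_{i+1}\widetilde A_i+s_i\widetilde A_{i+1}=\widetilde A_{i+1}-\widetilde A_i=(A_i-A_{i+1})[1]\,\ln'(A_i-A_{i+1})=A_i-A_{i+1}=s_iA_i+s_{i+1}A_{i+1};
\]
the subcase $s_i=-1$ is symmetric. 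Adding the contributions of the unchanged factors then gives $\mc E_{(\bm y,\bka)}(x)=\mc E_{(\bm y^{[i]},\bka^{[i]})}(x)$.

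Equivalently one can transcribe the proofs of Lemmas~\ref{lem eigen gl2} and~\ref{lem gl11 eigen} directly: form $\mc E_{(\bm y^{[i]},\bka^{[i]})}(x)-\mc E_{(\bm y,\bka)}(x)$, note that only the summands with $a\in\{i,i+1\}$ survive, and substitute \eqref{twisted brp} in the bosonic case (with $T^{\bm s}_i(T^{\bm s}_{i+1})^{-1}y_{i-1}[-s_i]y_{i+1}$ in the role of $\Wr^+(y,\tl y)$) or \eqref{twisted frp} in the fermionic case; the twist enters only through the shift-invariant constants $e^{h\ka_a}$, so it is carried along without difficulty. The step I expect to need the most care is the fermionic one, where the parity sequence genuinely changes from $\bm s$ to $\bm{s}^{[i]}$: there one must pass between the two forms of the two-factor block via Lemma~\ref{eq relations diff}, simultaneously track the transposition $\ka_i\leftrightarrow\ka_{i+1}$, and use that the fermionic reproduction is involutive up to a scalar. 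Everything else is a routine computation.
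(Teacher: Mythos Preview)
Your proposal is correct and follows essentially the same approach as the paper, whose proof simply reads ``The proof is similar to proofs of Lemmas~\ref{lem eigen gl2} and~\ref{lem gl11 eigen}.'' You carry this out in detail: reduce to the two summands at positions $i,i+1$ and use the reproduction relation \eqref{twisted brp} or \eqref{twisted frp}. Your organization via the invariance of the two-factor block of $\cR^{\bm s}(\bm y,\bka)$ is a clean way to package the same computation, and your final paragraph is exactly the direct transcription the paper has in mind.
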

\begin{proof}
	The proof is similar to proofs of Lemmas \ref{lem eigen gl2} and \ref{lem gl11 eigen}.
\end{proof}

Let $\sigma_i$ be the permutation $(i,i+1)$ in the symmetric group $\mathfrak S_{m+n}$. There is a natural action of $\mathfrak S_{m+n}$ on the set of sequences of $m+n$ complex numbers. Namely, for a sequence $\bka$, we have $\sigma_i\bka=\bka^{[i]}$.

\begin{thm}\label{thm bijection weyl}
	The map $P(\bm y,\bka)\to \mathfrak S_{m+n}\bka$ given by $(\tl{\bm y},\tl{\bka})\mapsto \tl{\bka}$ is a bijection between the twisted population $P(\bm y,\bka)$ and the orbit of $\bka$ under the action of symmetric group $\mathfrak S_{m+n}$. In particular, it gives a bijection between the twisted population $P(\bm y,\bka)$ and the symmetric group $\mathfrak S_{m+n}$.
\end{thm}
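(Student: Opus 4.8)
The statement is the $\glMN$ analogue of the even quasi-periodic result of \cite{MV2}, and the plan is to adapt that argument to the twisted reproduction procedures of Theorem \ref{Repro twisted}. Since $e^{h\ka_i}\ne e^{h\ka_j}$ for $i\ne j$, the group $\mathfrak S_{m+n}$ acts freely on the orbit $\mathfrak S_{m+n}\bka$ by permuting entries, so $w\mapsto w\bka$ identifies $\mathfrak S_{m+n}$ with that orbit; this yields the ``in particular'' clause once the first assertion is proved. Each reproduction in the $i$-th direction replaces $\bka$ by $\bka^{[i]}=\sigma_i\bka$ and simultaneously the parity sequence $\bm s$ by $\bm s^{[i]}=\sigma_i\bm s$ (a genuine change precisely when $s_i\ne s_{i+1}$); hence the $\bka$-component of every member of $P(\bm y,\bka)$ lies in $\mathfrak S_{m+n}\bka$, its parity sequence is obtained from that of $\bm y$ by the same permutation, and the map of the theorem is well defined.

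For surjectivity I would first verify that for twist-generic $\bka$ every reproduction is applicable to every member of the population and again yields a generic tuple; this is where $e^{h\ka_i}\ne e^{h\ka_j}$ is used. In the bosonic direction the homogeneous equation attached to \eqref{twisted brp} has only the zero polynomial as a solution --- a polynomial $g$ with $g(x+s_ih)=\mu\, g(x)$, where $\mu=(e^{h\ka_{i+1}}/e^{h\ka_i})^{s_i}\ne 1$, vanishes by comparing leading coefficients --- so $\tl y_i$ exists and is unique; in the fermionic direction the twisted combination on the right of \eqref{twisted frp}, whose coefficients $e^{h\ka_i}$ and $e^{h\ka_{i+1}}$ are genuinely distinct, avoids the coincidences that obstruct the untwisted super reproduction, cf.\ the Remark after Theorem \ref{thm repro pro glmn}. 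Granting this, applying to $(\bm y,\bka)$ the reproductions indexed by a word $\sigma_{i_1}\cdots\sigma_{i_\ell}$ for $w\in\mathfrak S_{m+n}$ produces a member of $P(\bm y,\bka)$ with $\bka$-component $w\bka$, so the map is onto $\mathfrak S_{m+n}\bka$.

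For injectivity I would use the invariance of the rational difference operator proved just above: all members of $P(\bm y,\bka)$ share one operator $\cR_P$. Fix a target $w\bka$; a member $(\bm y',w\bka)$, with its parity sequence (which, as noted, is determined by $w$), gives through the twisted form of \eqref{eq rational diff oper glmn} a complete factorization of $\cR_P$ whose ordered leading coefficients involve $e^{h(w\bka)_1},\dots,e^{h(w\bka)_{m+n}}$ in a prescribed order and with a prescribed parity pattern; it then suffices to show $\cR_P$ has exactly one such complete factorization. Extending to functions of the form $e^{zx}r(x)$, $r\in\C(x)$, the correspondence of Section \ref{sec 2.3} between complete factorizations of a rational difference operator and full superflags of its kernel, $\ker\cR_P$ is an $(m+n)$-dimensional superspace $W$ with a distinguished homogeneous basis $g_1,\dots,g_{m+n}$, $g_a=e^{\ka_a x}\rho_a(x)$; since the $\ka_a$ are pairwise distinct, $g_a$ is the only element of $W$ of exponential type $e^{\ka_a x}$ up to scalar, and a span of some of the $g_a$ contains a function of type $e^{\ka_b x}$ only if it contains $g_b$. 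A complete factorization of $\cR_P$ with the prescribed ordered exponentials therefore corresponds to the coordinate flag of $W$ determined by the matching reordering of $g_1,\dots,g_{m+n}$, and is unique. Hence $\bm y'$ is determined by $w$, so by $w\bka$, and the map is injective; together with surjectivity and the first paragraph this identifies $P(\bm y,\bka)$, $\mathfrak S_{m+n}\bka$, and $\mathfrak S_{m+n}$.

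The main obstacle is the uniqueness of the prescribed-order complete factorization --- equivalently, setting up the twisted superflag dictionary and recognizing the flags coming from the population as the coordinate flags of $W$. An equivalent but more computational route is to check that the reproductions satisfy the Coxeter relations of $\mathfrak S_{m+n}$: $r_i^2=\id$ (from uniqueness of $\tl y_i$ and a one-line verification that the reverse reproduction recovers $y_i$ up to a scalar), $r_ir_j=r_jr_i$ for $|i-j|\ge 2$ (clear, since they modify disjoint blocks of $\bm y$), and the braid relation $r_ir_{i+1}r_i=r_{i+1}r_ir_{i+1}$; then $\mathfrak S_{m+n}$ acts on $P(\bm y,\bka)$, the action is transitive by surjectivity, and an equivariant map onto the free orbit $\mathfrak S_{m+n}\bka$ must be a bijection. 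On this route the delicate step is the braid relation, which I would derive from a discrete Wronskian identity in the spirit of Proposition \ref{prop flag factor}.
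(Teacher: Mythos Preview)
Your proposal is essentially the argument the paper defers to: the paper's own proof is only the sentence ``similar to \cite[Corollary 4.12]{MV2},'' and the method there is exactly what you outline --- the invariant rational difference operator has a kernel spanned by quasi-exponentials $e^{\ka_a x}\rho_a(x)$ with pairwise distinct $\ka_a$, complete factorizations correspond to full (super)flags, and since a flag compatible with a prescribed ordering of the exponents must be the corresponding coordinate flag, the factorization with a given $w\bka$ is unique. Your treatment of surjectivity (uniqueness and existence of $\tl y_i$ via $e^{h\ka_i}\ne e^{h\ka_{i+1}}$) and your observation that the parity sequence is carried along by the same permutation are the correct super refinements; the alternative Coxeter-relations route you sketch is also standard and equivalent.
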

\begin{proof}
	The proof is similar to that of \cite[Corollary 4.12]{MV2}.
\end{proof}

\appendix
\section{The Bethe ansatz for $\mathrm Y(\gl_{1|1})$}
\label{sec gl11 eg conj}
In this section, we give the basics of Bethe ansatz for $\gl_{1|1}$ XXX model (supersymmetric spin chains associated to $\gl_{1|1}$). We follow the notation of Section \ref{sec repro pro gl11}. We also set $h=1$.

\subsection{Super Yangian $\mathrm Y(\gl_{1|1})$ and its representations}
Recall that for $\mathrm Y(\gl_{1|1})$ we have
\be
[\mc L_{ii}(x_1),\mc L_{ii}(x_2)]=0,\quad \mc L_{ij}(x_1)\mc L_{ij}(x_2)=\frac{x_1-x_2-(-1)^{|i|}}{x_2-x_1-(-1)^{|i|}}\mc L_{ij}(x_2)\mc L_{ij}(x_1),
\ee
\be
\mc L_{kk}(x_1)\mc L_{ij}(x_2)=\frac{x_1-x_2-(-1)^{|i|}}{x_1-x_2}\mc L_{ij}(x_2)\mc L_{kk}(x_1)+\frac{(-1)^{|i|}}{x_1-x_2}\mc L_{ij}(x_1)\mc L_{kk}(x_2),
\ee
where $i\ne j$ and $i,j,k\in\{1,2\}$.

In what follows we work with the standard parity sequence $\bm s_0$.

The description of finite dimensional irreducible representations of $\mathrm Y(\gl_{1|1})$ is well known. 

Let $\la=(\la_1,\la_2)$ be a $\gl_{1|1}$ weight, we say that $\la$ is \textit{non-degenerate} if $\la_1+\la_2\ne 0$. Clearly, $L_{\la}$ is two-dimensional if $\la$ is non-degenerate and one-dimensional otherwise. Let $\bla=(\la^{(1)},\dots,\la^{(p)})$ be a sequence of non-degenerate $\gl_{1|1}$ weights, $\bm z$ a sequence of complex numbers. Let $\lambda^{(k)}=(a_k,b_k)$, $a_k,b_k\in \C$,
\[
a=\sum_{k=1}^pa_k,\quad b=\sum_{k=1}^pb_k,\quad \varphi(x)=\prod_{k=1}^p(x-z_k+a_k),\quad \psi(x)=\prod_{k=1}^p(x-z_k-b_k).
\]

\begin{thm}[\cite{Z}]\label{thm fd gl11}
	Every finite dimensional irreducible representation of $\mathrm Y(\gl_{1|1})$ is a tensor product of evaluation $\mathrm Y(\gl_{1|1})$-modules up to twisting by a one-dimensional $\mathrm Y(\gl_{1|1})$-module. Moreover, $L(\bla,\bm z)$ is irreducible if and only if $\varphi(x)$ and $\psi(x)$ are relatively prime.
\end{thm}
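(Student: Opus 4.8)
The statement is classical (see \cite{Z}); here I outline a self-contained route, combining the highest weight classification of finite dimensional $\mathrm{Y}(\gl_{1|1})$-modules with the explicit RTT relations and a cyclicity argument. First I would establish the highest weight theory: using the zero mode relations \eqref{zero mode com relations} to introduce a partial order on weights, one shows that every finite dimensional module contains a vector $v$ with $\mc L_{21}(x)v=0$ and $\mc L_{ii}(x)v=\La_i(x)v$ for series $\La_i\in 1+x^{-1}\C[[x^{-1}]]$, and that an irreducible module is generated by such a $v$ and determined up to isomorphism by the pair $(\La_1,\La_2)$; denote it $L(\La_1,\La_2)$. Next I would classify the one dimensional modules: on such a module $\mc L_{12}(x)$ and $\mc L_{21}(x)$ act as $0$, and the $(\mc L_{12},\mc L_{21})$ relation coming from \eqref{com relations} then forces $\mc L_{11}(x)=\mc L_{22}(x)=:\mu(x)$ with $\mu\in 1+x^{-1}\C[[x^{-1}]]$ arbitrary. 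Since tensoring $L(\La_1,\La_2)$ with such a one dimensional module $\C_\mu$ multiplies both $\La_1$ and $\La_2$ by $\mu$, the ratio $\rho:=\La_1/\La_2$ is exactly the datum that survives ``up to twisting''.

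\textbf{Finiteness criterion and realization.} I would then prove that $L(\La_1,\La_2)$ is finite dimensional if and only if $\rho$ is a rational function. For ``only if'', one evaluates $\mc L_{21}(x)$ on the vectors $\mc L_{12}(x_1)\cdots\mc L_{12}(x_k)v$ using the RTT relations and checks that, unless $\rho$ is rational, these remain linearly independent for all $k$, so the module is infinite dimensional. For ``if'', write $\rho=\varphi/\psi$ in lowest terms with $\varphi,\psi$ monic of the same degree $p$ (equal because $\rho\to 1$), and factor $\rho(x)=\prod_{k=1}^p (x-z_k+a_k)(x-z_k-b_k)^{-1}$ into linear ratios by pairing the zeros of $\varphi$ with those of $\psi$; coprimeness forces $a_k+b_k\ne 0$, so each factor is the highest weight ratio of the two dimensional evaluation module $L_{(a_k,b_k)}(z_k)$. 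Then $\bigotimes_{k=1}^p L_{(a_k,b_k)}(z_k)$ is finite dimensional with highest weight ratio $\rho$, and it is irreducible by the next step; twisting by a suitable $\C_\mu$ matches $(\La_1,\La_2)$ exactly, yielding $L(\La_1,\La_2)$. This proves the first assertion and reduces everything to the irreducibility criterion.

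\textbf{Irreducibility of tensor products.} It remains to show that for a sequence $\bla$ of non-degenerate weights and complex $\bm z$, the module $L(\bla,\bm z)=\bigotimes_{k=1}^p L_{\la^{(k)}}(z_k)$ is irreducible if and only if $\varphi$ and $\psi$ are coprime. For ``only if'', a common root $c$ of $\varphi$ and $\psi$ produces, via the RTT relations, a singular vector of strictly lower weight, equivalently a proper quotient. For ``if'', one shows by induction on $p$, using the coproduct and the normalized R-matrix $R(z_i-z_j)$, that the highest weight vector $v_1\otimes\cdots\otimes v_p$ generates the whole module and, dually, that the lowest weight vector does as well; coprimeness of $\varphi$ and $\psi$ is precisely the condition that keeps the scalar factors arising in these computations nonzero, so no proper submodule can exist.

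\textbf{Main obstacle.} The hard part is the irreducibility (cyclicity) statement of the last paragraph together with the ``only if'' half of the finiteness criterion: these are the Drinfeld-polynomial type arguments, and although the small rank keeps the bookkeeping with the single fermionic generator $\mc L_{12}$ manageable, one must track with care the scalar factors produced by the RTT relations so as to pin down exactly when linear independence, cyclicity, and co-cyclicity hold. All the remaining steps are routine highest weight theory.
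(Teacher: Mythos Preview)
The paper does not prove this theorem: it is stated with the attribution \cite{Z} (Zhang) and no argument is given; the paper simply records it as a known classification result and moves on to consequences (the binary property, the explicit R-matrix, etc.). So there is no ``paper's own proof'' to compare your proposal against.

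Your outline is a reasonable sketch of the standard highest-weight-theory approach that one finds in \cite{Z} and its analogues for even Yangians: establish highest weight theory, classify one-dimensional twists, identify the finiteness condition as rationality of $\La_1/\La_2$, and prove the irreducibility criterion by cyclicity/co-cyclicity. One caution on the details you flag as the ``hard part'': in the $\gl_{1|1}$ case the creation operator $\mc L_{12}$ is odd, and on each two-dimensional evaluation factor it squares to zero; consequently the vectors $\mc L_{12}(x_1)\cdots\mc L_{12}(x_k)v$ vanish in any finite tensor product once $k$ exceeds the number of factors, and the linear-independence/cyclicity bookkeeping is governed by antisymmetry rather than the usual $\gl_2$-type string arguments. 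Your sketch is compatible with this, but the phrase ``these remain linearly independent for all $k$'' in the ``only if'' direction of finiteness should be handled with this fermionic nilpotency in mind (one works in a suitable Verma-type module, not in the finite tensor product). With that adjustment the strategy is sound.
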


Clearly, the $\mathrm Y(\gl_{1|1})$-module $L(\bla,\bm z)$ is irreducible if and only if $z_i-z_j-a_i-b_j\ne 0$ for all $i\ne j$. Moreover, it satisfies the binary property. Namely, $L(\bla,\bm z)$ is irreducible if and only if $L_{\la^{(i)}}(z_i)\otimes L_{\la^{(j)}}(z_j)$ is irreducible for all $1\lle i<j\lle p$. Furthermore, every finite dimensional irreducible representation of $\mathrm Y(\gl_{1|1})$ has dimension $2^r$ for some non-negative integer $r$. 

\medskip
Let $v_1^{(k)}$ be the highest weight vector of $L_{\la^{(k)}}$ with respect to the standard root system, and $v_2^{(k)}=e_{21}v_1^{(k)}$. Then $v_1^{(k)}$, $v_2^{(k)}$ is a basis of $L_{\la^{(k)}}$. We use the shorthand notation $|0\rangle$ for $v_1^{(1)}\otimes\cdots\otimes v_1^{(p)}$. 

Let $E_{ij}$, $i,j=1,2$, be the linear operator in $\End(L_{\la^{(k)}})$ of parity $|i|+|j|$ such that $E_{ij}v_{r}^{(k)}=\delta_{jr}v_i^{(k)}$ for $r=1,2$.

The R-matrix $R(x)\in \End(L_{\la^{(i)}})\otimes \End(L_{\la^{(j)}})$ is given by
\begin{align*}
R(x)=&\ E_{11}\otimes E_{11}-\frac{b_i+a_j+x}{a_i+b_j-x}E_{22}\otimes E_{22}+\frac{b_j-b_i-x}{a_i+b_j-x}E_{11}\otimes E_{22}\\
& +\frac{a_i-a_j-x}{a_i+b_j-x}E_{22}\otimes E_{11}-\frac{a_i+b_i}{a_i+b_j-x}E_{12}\otimes E_{21}+\frac{a_j+b_j}{a_i+b_j-x}E_{21}\otimes E_{12}.
\end{align*}
%\[
%\begin{bmatrix}
%1 & 0 & 0 & 0 \\
%0 & \dfrac{b_j-b_i-u}{a_i+b_j-u} & \dfrac{a_i+b_i}{a_i+b_j-u} & 0 \\
%0 & \dfrac{a_j+b_j}{a_i+b_j-u} & \dfrac{a_i-a_j-u}{a_i+b_j-u} & 0 \\
%0 & 0 & 0 & 1 
%\end{bmatrix} . 
%\]
Clearly, $L_{\la^{(i)}}(z_i)\otimes L_{\la^{(j)}}(z_j)$ is irreducible if and only if $R(z_i-z_j)$ is well-defined and invertible.

Define an anti-automorphism $\iota:\rY(\gl_{1|1})\to \rY(\gl_{1|1})$ by the rule, $\iota(\mc L_{ij}(x))=(-1)^{|i||j|+|i|}\mc L_{ji}(x)$, $i,j=1$. One has $\iota(X_1X_2)=(-1)^{|X_1||X_2|}\iota(X_2)\iota(X_1)$ for $X_1,X_2\in \mathrm Y(\gl_{1|1})$. Recall that $\cT(x)=\mc L_{11}(x)-\mc L_{22}(x)$, therefore $\iota(\cT(x))=\cT(x)$.

The \emph{Shapovalov form} $B_{\la^{(i)}}$ on $L_{\la^{(i)}}$ is a bilinear form such that $$B_{\la^{(i)}}(e_{ij}w_1,w_2)=(-1)^{(|i|+|j|)|w_1|}B_{\la^{(i)}}(w_1,(-1)^{|i||j|+|i|}e_{ji}w_2),$$ for all $i,j$ and $w_1,w_2\in L_{\la^{(i)}}$,  and $B_{\la^{(i)}}(v^{(i)}_1,v^{(i)}_1)=1$. Explicitly, it is given by
\[
B_{\la^{(i)}}(v^{(i)}_1,v^{(i)}_1)=1,\quad B_{\la^{(i)}}(v^{(i)}_1,v^{(i)}_2)=B_{\la^{(i)}}(v^{(i)}_2,v^{(i)}_1)=0,\quad B_{\la^{(i)}}(v^{(i)}_2,v^{(i)}_2)=-(a_i+b_i).
\]
The Shapovalov forms $B_{\la^{(i)}}$ on $L_{\la^{(i)}}$ induce a bilinear form $B_{\bla}=\bigotimes_{k=1}^pB_{\la^{(k)}}$ (following the usual sign convention) on $L(\bla)$.

Let $R_{\bla,\bm z}\in \End(L(\bla))$ be the product of R-matrices,
\[
R_{\bla,\bm z}=\mathop{\overrightarrow\prod}\limits_{1\lle i\lle p}\,\mathop{\overrightarrow\prod}\limits_{i<j\lle p}R^{(i,j)}(z_i-z_j).
\]
Define a bilinear form $B_{\bla,\bm z}$ on $L(\bla,\bm z)$ by
\[
B_{\bla,\bm z}(w_1,w_2)=B_{\bla}(w_1,R_{\bla,\bm z}w_2),
\]
for all $w_1,w_2\in L(\bla,\bm z)$.

One shows that, c.f. \cite[Section 7]{MTV},
\[
B_{\bla,\bm z}(|0\rangle,|0\rangle)=1,\qquad B_{\bla,\bm z}(Xw_1,w_2)=(-1)^{|X||w_1|}B_{\bla,\bm z}(w_1,\iota(X)w_2), 
\]
for all $X\in \rY(\gl_{1|1})$, $w_1,w_2\in L(\bla,\bm z)$. In addition, if $L(\bla,\bm z)$ is irreducible, then $B_{\bla,\bm z}$ is non-degenerate.

\subsection{Bethe ansatz for $\gl_{1|1}$ XXX model}\label{sec 7.2}
In this section, we study the spectrum of the transfer matrix $\cT(x)=\mc L_{11}(x)-\mc L_{22}(x)$. 

Let $\bla=(\la^{(1)},\dots,\la^{(p)})$ be a sequence of non-degenerate $\gl_{1|1}$ weights. Recall from Section \ref{sec repro pro gl11} that
if $y=(x-t_1)\cdots(x-t_l)$ is a divisor of $\varphi(x)-\psi(x)$, then $\bm t=(t_1,\dots,t_l)$ is a solution of the BAE associated to $\bm s_0$, $\bm\lambda$, $\bm z$, and $l$. 

It is convenient to renormalize the Bethe vector $w(\bs t,\bs z)$ associated to $\bm t$, see \eqref{BV 2},:
\[
\tilde w(\bs t,\bs z)= c_0 w(\bs t,\bs z), \qquad c_0=\prod_{i=1}^l \prod_{k=1}^p(t_i-z_k).
\]
The factor $c_0$ clears up the denominators and the Bethe vector $\tilde w(\bs t,\bs z)$ is well-defined for all $\bs z,\bs t$.

The following theorem is well known, see e.g. \cite{BR}.
\begin{thm}\label{thm gl11 eigenvalue in y}
	 If the Bethe vector $\tilde w(\bs t,\bs z)$ is non-zero, then $\tilde w(\bs t,\bs z)$ is an eigenvector of the transfer matrix $\cT(x)$ with the corresponding eigenvalue 
	\beq\label{eq gl11 eigenvalue in y}
	\mc E(x)=\frac{y[1]}{y}(\varphi-\psi)\prod_{k=1 }^p(x-z_k)^{-1}.
	\eeq
\end{thm}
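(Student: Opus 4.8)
The plan is to run the algebraic Bethe ansatz by hand. One could instead simply observe that \eqref{eq gl11 eigenvalue in y} is the $m=n=1$, $\bm s=\bm s_0$ specialization of Theorem \ref{thm BR 08}: using the telescoping identities $T_1/T_1[1]=\varphi/Z$ and $T_2/T_2[-1]=\psi/Z$ with $Z(x)=\prod_{k=1}^p(x-z_k)$, the eigenvalue \eqref{glmn eigenvalue} collapses to $\tfrac{y[1]}{y}\cdot\tfrac{\varphi-\psi}{Z}$. Since the purpose of this appendix is to be self-contained, I sketch the direct computation.

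First I would record the action on the vacuum $|0\rangle=v_1^{(1)}\otimes\cdots\otimes v_1^{(p)}$. On the tensor product of evaluation modules one has $\mc L_{11}(x)|0\rangle=\big(\varphi(x)/Z(x)\big)|0\rangle$ and $\mc L_{22}(x)|0\rangle=\big(\psi(x)/Z(x)\big)|0\rangle$, while $\mc L_{12}(x)$ acts on the $k$-th factor as $-e_{21}/(x-z_k)$; hence the renormalization $c_0=\prod_{i=1}^l Z(t_i)$ makes $\tilde w(\bs t,\bs z)=(-1)^l c_0\,\mc L_{12}(t_1)\cdots\mc L_{12}(t_l)|0\rangle$ polynomial in $\bs t,\bs z$. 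The structural fact that drives the argument is that, since $|1|=0$, both diagonal operators obey the \emph{same} exchange relation with the creation operator,
\[
\mc L_{dd}(x)\,\mc L_{12}(t)=\frac{x-t-1}{x-t}\,\mc L_{12}(t)\,\mc L_{dd}(x)+\frac{1}{x-t}\,\mc L_{12}(x)\,\mc L_{dd}(t),\qquad d\in\{1,2\},
\]
while the creation operators satisfy $\mc L_{12}(x_1)\mc L_{12}(x_2)=\tfrac{x_1-x_2-1}{x_2-x_1-1}\mc L_{12}(x_2)\mc L_{12}(x_1)$.

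Next I would push $\mc L_{dd}(x)$ through $\mc L_{12}(t_1)\cdots\mc L_{12}(t_l)$ to the vacuum, at each step splitting off the ``wanted'' term (which keeps $\mc L_{12}(t_j)$, with coefficient $\tfrac{x-t_j-1}{x-t_j}$) from the ``unwanted'' one (which produces $\mc L_{12}(x)$ and a stray $\mc L_{dd}(t_j)$), reordering the $\mc L_{12}$'s via the relation above. The total wanted contribution is $\prod_{j=1}^l\tfrac{x-t_j-1}{x-t_j}\cdot\tfrac{\varphi(x)}{Z(x)}\,\tilde w$ for $d=1$ and the analogous expression with $\psi$ for $d=2$; since $y[1]/y=\prod_j\tfrac{x-t_j-1}{x-t_j}$, subtracting yields exactly $\mc E(x)\tilde w(\bs t,\bs z)$. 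For the unwanted terms, the standard pole/symmetrization argument—valid verbatim here precisely because the $\mc L_{dd}$–$\mc L_{12}$ exchange relations for $d=1$ and $d=2$ coincide—shows that the coefficient of $\mc L_{12}(x)\prod_{r\ne j}\mc L_{12}(t_r)|0\rangle$ in $\mc L_{dd}(x)\tilde w$ equals $C_j(x,\bs t,\bs z)\cdot\varphi(t_j)$ for $d=1$ and $C_j(x,\bs t,\bs z)\cdot\psi(t_j)$ for $d=2$, with the \emph{same} prefactor $C_j$ in both cases. Hence the unwanted coefficient in $\cT(x)\tilde w=(\mc L_{11}(x)-\mc L_{22}(x))\tilde w$ is proportional to $\varphi(t_j)-\psi(t_j)$, which vanishes because $y$ divides $\varphi-\psi$ and $t_j$ is a root of $y$. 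If $y$ has multiple roots, divisibility also kills the relevant derivatives, matching the higher-order poles generated in the reordering; alternatively one checks the identity on the dense locus of distinct $t_j$ and uses that both sides are rational in $\bs t$. Therefore $\cT(x)\tilde w(\bs t,\bs z)=\mc E(x)\tilde w(\bs t,\bs z)$.

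The main obstacle is purely computational: organizing the unwanted-term bookkeeping—tracking the reordering scalars and signs among the odd operators $\mc L_{12}$—and verifying that the prefactor $C_j$ is genuinely the same for $\mc L_{11}$ and $\mc L_{22}$. Once that common structure is in place, the cancellation is an immediate consequence of the Bethe ansatz equation $\varphi(t_j)=\psi(t_j)$.
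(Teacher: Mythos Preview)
Your proposal is correct and follows essentially the same algebraic Bethe ansatz argument as the paper's proof. The only difference is presentational: the paper records the full multi-step commutation relation $\mc L_{jj}(x)\mc L_{12}(t_1)\cdots\mc L_{12}(t_l)=\xi(x;\bm t)\mc L_{12}(t_1)\cdots\mc L_{12}(t_l)\mc L_{jj}(x)+\sum_i\xi_i(x;\bm t)\mc L_{12}(x)\cdots\widehat{\mc L_{12}(t_i)}\cdots\mc L_{12}(t_l)\mc L_{jj}(t_i)$ as a single identity with explicit $\xi,\xi_i$ independent of $j$, then observes that the unwanted sum applied to $|0\rangle$ is proportional to $c_0\cT(t_i)|0\rangle=0$; you instead describe the iterative pushing-through and the equality of the prefactors $C_j$ for $d=1,2$, which amounts to the same thing.
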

\begin{proof}
	For $j=1,2$, one has the following relation,
	\begin{align}
	\mc L_{jj}(x)\mc L_{12}(t_1)\cdots \mc L_{12}(t_l)=&\ \xi(x;\bm t)\mc L_{12}(t_1)\cdots \mc L_{12}(t_l)\mc L_{jj}(x)\nonumber \\
	&\ +\sum_{i=1}^{l} \xi_i(x;\bm t)\mc L_{12}(x)\mc L_{12}(t_1)\cdots\widehat{\mc L_{12}(t_i)}\cdots \mc L_{12}(t_l)\mc L_{jj}(t_i).\label{eq com A D B}
	\end{align}
	Here the symbol $\widehat{\mc L_{12}(t_i)}$ means the factor $\mc L_{12}(t_i)$ is skipped and the functions $\xi(x;\bm t)$ and $\xi_i(x;\bm t)$ are given by
	\[
	\xi(x;\bm t)=\prod_{1\lle r\lle l}\frac{x-t_r-1}{x-t_r}=\frac{y[1]}{y},\quad
	\xi_i(x;\bm t)=(-1)^{i-1}\frac{1}{x-t_i}\prod_{1\lle r<i}\frac{t_i-t_r+1}{t_i-t_r}\prod_{ i<r\lle l}\frac{t_i-t_r-1}{t_i-t_r}.
	\]
	
	We have
	\begin{align*}
	\cT(x)|0\rangle=(\varphi-\psi)\prod_{k=1 }^p(x-z_k)^{-1}|0\rangle.
	\end{align*}
	Since $\bm t$ is a solution of the BAE, we have $c_0 \cT(t_i)|0\rangle=0$ for $i=1,\dots,l$. Therefore it follows from \eqref{eq com A D B} that
	\[
	\cT(x)\tilde w(\bs z,\bs t)=c_0(\mc L_{11}(x)-\mc L_{22}(x))\mc L_{12}(t_1)\cdots \mc L_{12}(t_l)|0\rangle=\frac{y[1]}{y}(\varphi-\psi)\prod_{k=1 }^p(x-z_k)^{-1}\tilde w(\bs z,\bs t).\qedhere
	\]
\end{proof}

Recall that the transfer matrix $\cT(x)$ commutes with the subalgebra $\mathrm U(\gl_{1|1})$ of $\rY(\gl_{1|1})$. 

\begin{prop}\label{prop BV gl11 singular}
	The Bethe vector $\tilde w(\bs t,\bs z)$ is $\gl_{1|1}$ singular. 
\end{prop}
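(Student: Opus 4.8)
Since $\tilde w(\bs t,\bs z)$ is built from operators that are homogeneous for the $\gl_{1|1}$ weight grading acting on the $\gl_{1|1}$ weight vector $|0\rangle$, it is itself a weight vector; so ``$\gl_{1|1}$ singular'' means exactly that it is killed by the positive root vector $e_{12}$. Under the Hopf embedding $\UglMN\hookrightarrow\YglMN$ one has $e_{12}=(-1)^{|1|}\mc L_{21}^{(1)}/h=\mc L_{21}^{(1)}$ (recall $h=1$ in this section and $|1|=0$), so it suffices to show $\mc L_{21}^{(1)}\,\mc L_{12}(t_1)\cdots\mc L_{12}(t_l)\,|0\rangle=0$. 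The first step is to record the zero-mode relation obtained from \eqref{zero mode com relations} with $i=2$, $j=1$, $(k,\ell)=(1,2)$: since $\mc L_{21}^{(1)}$ and $\mc L_{12}(x)$ are both odd, the super-bracket there is an anticommutator, and a short sign count gives $\mc L_{21}^{(1)}\mc L_{12}(x)+\mc L_{12}(x)\mc L_{21}^{(1)}=-\cT(x)$, with $\cT(x)=\mc L_{11}(x)-\mc L_{22}(x)$.

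Next I would push $\mc L_{21}^{(1)}$ to the right through the string $\mc L_{12}(t_1)\cdots\mc L_{12}(t_l)$ one factor at a time using this relation, obtaining
\[
\mc L_{21}^{(1)}\mc L_{12}(t_1)\cdots\mc L_{12}(t_l)\,|0\rangle=(-1)^l\mc L_{12}(t_1)\cdots\mc L_{12}(t_l)\,\mc L_{21}^{(1)}|0\rangle+\sum_{j=1}^l(-1)^{j}\,\mc L_{12}(t_1)\cdots\mc L_{12}(t_{j-1})\,\cT(t_j)\,\mc L_{12}(t_{j+1})\cdots\mc L_{12}(t_l)\,|0\rangle .
\]
Because each $v_1^{(k)}$ is $\gl_{1|1}$ highest weight, $e_{12}|0\rangle=0$, i.e.\ $\mc L_{21}^{(1)}|0\rangle=0$, so the first term drops out. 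It remains to show every summand vanishes.

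For the $j$-th summand I would commute $\cT(t_j)$ to the right through $\mc L_{12}(t_{j+1})\cdots\mc L_{12}(t_l)$. The identity \eqref{eq com A D B} holds for $\mc L_{11}$ and for $\mc L_{22}$ with the \emph{same} coefficients $\xi,\xi_i$, hence it holds for $\cT=\mc L_{11}-\mc L_{22}$; applying it with external variable $x=t_j$ produces the term $\xi(t_j;\cdot)\,\mc L_{12}(t_{j+1})\cdots\mc L_{12}(t_l)\,\cT(t_j)|0\rangle$ plus a sum of terms each carrying a factor $\cT(t_i)|0\rangle$ with $j<i\le l$. Now $\cT(x)|0\rangle=\big(\varphi(x)-\psi(x)\big)\prod_{k}(x-z_k)^{-1}|0\rangle$, and since $\bs t$ solves the BAE \eqref{eq gl11 XXX BAE} we have $\varphi(t_i)=\psi(t_i)$ for $i=1,\dots,l$ (the prefactor $c_0$ in $\tilde w$ clearing the spurious poles should $t_i$ meet some $z_k$), so $\cT(t_i)|0\rangle=0$ for every $i$. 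Hence every summand is zero, $\mc L_{21}^{(1)}\tilde w(\bs t,\bs z)=0$, and $\tilde w(\bs t,\bs z)$ is $\gl_{1|1}$ singular.

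The one delicate point is that the rational functions $\xi_i$ develop poles when two Bethe roots coincide (which the multiplicity restriction in Section~\ref{sec super XXX bae} permits). I would dispose of this exactly as for \eqref{eq com A D B} and Theorem~\ref{thm gl11 eigenvalue in y}: run the argument for pairwise distinct $\bs t$ first, and then observe that the renormalized vector $\tilde w(\bs t,\bs z)$ together with the identities above depend polynomially on $\bs t$, so the conclusion persists by specialization. I expect this bookkeeping, and keeping the super-signs in \eqref{com relations}, \eqref{zero mode com relations} and \eqref{eq com A D B} consistent, to be the only real obstacle; the structural content is the short induction above.
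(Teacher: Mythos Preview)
Your proof is correct and is essentially the paper's argument: the paper records the single commutator identity $[\mc L_{21}^{(1)},\mc L_{12}(t_1)\cdots\mc L_{12}(t_l)]=\sum_{i}\nu_i(\bm t)\,\mc L_{12}(t_1)\cdots\widehat{\mc L_{12}(t_i)}\cdots\mc L_{12}(t_l)\,\cT(t_i)$ and then invokes $\mc L_{21}^{(1)}|0\rangle=0$ and $c_0\cT(t_i)|0\rangle=0$, whereas you reach the same conclusion in two stages (first push $\mc L_{21}^{(1)}$ through via the anticommutator, then push each resulting $\cT(t_j)$ to the right via \eqref{eq com A D B}). Your two-step route is precisely how one verifies the paper's closed formula, so the arguments coincide in substance.
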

\begin{proof}
	By \eqref{zero mode com relations}, one has the following relation,
	\[
	[\mc L_{21}^{(1)},\mc L_{12}(t_1)\cdots \mc L_{12}(t_l)]=\sum_{i=1}^{l} \nu_i(\bm t)\mc L_{12}(t_1)\cdots\widehat{\mc L_{12}(t_i)}\cdots \mc L_{12}(t_l)\cT(t_i).
	\]
	The functions $\nu_k(\bm t)$ are given by
	\[
	\nu_i(\bm t)=(-1)^i\prod_{1\lle r<i}\frac{t_i-t_r+1}{t_i-t_r}\prod_{ i<r\lle l}\frac{t_i-t_r-1}{t_i-t_r}.
	\]
	Note that $\mc L_{21}^{(1)}|0\rangle=0$ and $c_0\cT(t_i)|0\rangle=0$ for $i=1,\dots,l$,
	therefore the statement  follows.
\end{proof}

\begin{prop}\label{prop BV orthogonal}
	Suppose $\varphi\ne \psi$. Let $\bm t$ and $\tl{\bm t}$ be two different solutions of Bethe ansatz equation associated to $\bm s_0$, $\bm\lambda$, $\bm z$, then the Bethe vectors $\tilde w(\bm t,\bm z)$ and $\tilde w(\tl{\bm t}, \bm z)$ are orthogonal with respect to the form $B_{\bla,\bm z}$.
\end{prop}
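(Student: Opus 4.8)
The plan is to run the classical orthogonality argument for eigenvectors of a self-adjoint operator attached to distinct eigenvalues, with $\cT(x)$ playing the role of the operator and $B_{\bla,\bm z}$ the pairing. If one of the two Bethe vectors vanishes there is nothing to prove, so I would assume both $\tl w(\bs t,\bs z)$ and $\tl w(\tl{\bs t},\bs z)$ are non-zero; then by Theorem \ref{thm gl11 eigenvalue in y} they are eigenvectors of $\cT(x)$ with eigenvalues $\mc E(x)$ and $\tl{\mc E}(x)$ obtained from \eqref{eq gl11 eigenvalue in y} applied to the polynomials $y$ and $\tl y$ that represent $\bs t$ and $\tl{\bs t}$.

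The first step is to record that $\cT(x)$ is self-adjoint with respect to $B_{\bla,\bm z}$, in the sense that every coefficient of its expansion in powers of $x^{-1}$ is. This is immediate from two facts recalled earlier: $\cT(x)=\mc L_{11}(x)-\mc L_{22}(x)$ is an even element with $\iota(\cT(x))=\cT(x)$, and $B_{\bla,\bm z}(Xw_1,w_2)=(-1)^{|X||w_1|}B_{\bla,\bm z}(w_1,\iota(X)w_2)$ for all $X\in\rY(\gl_{1|1})$. Applying this with $X$ a coefficient of $\cT(x)$ and with the two Bethe vectors gives, coefficient by coefficient in $x^{-1}$,
\[
B_{\bla,\bm z}\big(\cT(x)\tl w(\bs t,\bs z),\tl w(\tl{\bs t},\bs z)\big)=B_{\bla,\bm z}\big(\tl w(\bs t,\bs z),\cT(x)\tl w(\tl{\bs t},\bs z)\big),
\]
and hence $\big(\mc E(x)-\tl{\mc E}(x)\big)\,B_{\bla,\bm z}\big(\tl w(\bs t,\bs z),\tl w(\tl{\bs t},\bs z)\big)=0$ as an identity of power series in $x^{-1}$ with scalar coefficient.

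It then remains to see that $\mc E(x)\neq\tl{\mc E}(x)$, which is the only step that uses more than formal properties of the form. By \eqref{eq gl11 eigenvalue in y} the two eigenvalues differ only through the factors $y[1]/y$ and $\tl y[1]/\tl y$, the remaining factor $(\varphi-\psi)\prod_{k}(x-z_k)^{-1}$ being a fixed nonzero rational function since $\varphi\neq\psi$. If these factors agreed, the rational function $g=y/\tl y$ would satisfy $g(x-h)=g(x)$, hence be constant, so that $y$ and $\tl y$ would be proportional and thus equal as monic polynomials; this contradicts $\bs t\neq\tl{\bs t}$. Therefore $\mc E(x)-\tl{\mc E}(x)$ is a nonzero rational function, some coefficient of its $x^{-1}$-expansion is nonzero, and the displayed identity forces $B_{\bla,\bm z}\big(\tl w(\bs t,\bs z),\tl w(\tl{\bs t},\bs z)\big)=0$. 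I expect the ``distinct eigenvalues'' step to be the only real content, since there one genuinely needs the explicit shape of the $\gl_{1|1}$ eigenvalue and the $h$-periodicity argument; the self-adjointness of $\cT(x)$ and the final deduction are routine once the adjointness identity for $B_{\bla,\bm z}$ is in place.
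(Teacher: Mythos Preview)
Your proof is correct and follows essentially the same route as the paper: both use the $\iota$-invariance of $\cT(x)$ together with the contravariance of $B_{\bla,\bm z}$ to reduce to showing the eigenvalues differ, and then use $\varphi\neq\psi$ and the fact that $y[1]/y=\tl y[1]/\tl y$ forces $y$ and $\tl y$ to be proportional. The only difference is that you spell out the periodicity argument for this last step, whereas the paper simply invokes the linear independence of $y$ and $\tl y$.
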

\begin{proof}
	Let $y$ and $\tl y$ represent $\bm t$ and $\tl{\bm t}$ respectively. Note that we have
	\[
	B_{\bla,\bm z}(\cT(x)\tl w(\bm t,\bm z),\tl w(\tl{\bm t},\bm z))=B_{\bla,\bm z}(\tl w(\bm t,\bm z),\cT(x)\tl w(\tl{\bm t},\bm z)).
	\]
	It follows from Theorem \ref{thm gl11 eigenvalue in y} that
	\[
	\Big(\frac{y[1]}{y}-\frac{\tl y[1]}{\tl y}\Big)(\varphi-\psi)\prod_{k=1 }^p(x-z_k)^{-1}B_{\bla,\bm z}(\tilde w(\bm t,\bm z),\tilde w(\tl{\bm t},\bm z))=0.
	\]
	Since $y$ and $\tl y$ are linearly independent and $\varphi\ne \psi$, the statement follows. 
\end{proof}

The following theorem is a particular case of \cite[Theorem 4.1]{PRS} which asserts that the square of the norm of the Bethe vector is essentially given by the Jacobian of the BAE. 
\begin{thm}[\cite{PRS}]\label{thm norm BV}
	The square of the norm of the Bethe vector $\tilde w(\bm t,\bm z)$ is given by
	\begin{align*}
	B_{\bla,\bm z}(\tilde w(\bm t,\bm z),\tilde w(\bm t,\bm z))=&\ (-1)^{l(l-1)/2}\prod_{1\lle i<j\lle l}\Big(\frac{t_i-t_j-1}{t_i-t_j}  \Big)^2\\ \times\prod_{i=1}^l\prod_{k=1}^p& \big((t_i-z_k+a_k)(t_i-z_k-b_k)\big)\prod_{i=1}^{l}\Big(\sum_{k=1}^{p}\frac{a_k+b_k}{(t_i-z_k+a_k)(t_i-z_k-b_k)}\Big).\qedd
	\end{align*}
\end{thm}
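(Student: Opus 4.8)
The plan is to reduce the norm to an off-shell scalar product, evaluate that scalar product as a determinant by exploiting that $\mc L_{12}(x)$ is an odd operator and that each evaluation module is two-dimensional, and finally specialize to the Bethe point, where the equations $\varphi(t_i)=\psi(t_i)$ force the relevant matrix to become diagonal. For Step~1, since $\iota(\mc L_{12}(x))=(-1)^{|1||2|+|1|}\mc L_{21}(x)=\mc L_{21}(x)$ and $B_{\bla,\bm z}(Xw_1,w_2)=(-1)^{|X||w_1|}B_{\bla,\bm z}(w_1,\iota(X)w_2)$, moving the operators that create the second copy of the Bethe vector to the left turns the norm, up to an overall sign and up to the constants $c_0$, into $\langle 0|\,\mc L_{21}(t_l)\cdots\mc L_{21}(t_1)\,\mc L_{12}(t_1)\cdots\mc L_{12}(t_l)\,|0\rangle$, where $\langle 0|$ is dual to $|0\rangle$ under $B_{\bla,\bm z}$. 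I would first compute the off-shell version $S_l(\bm s;\bm t)$, with independent parameters $\bm s$ on the left and $\bm t$ on the right, and recover the norm as the limit $\bm s\to\bm t$.

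\textbf{Step 2: determinant formula for $S_l$.} On the two-dimensional module $L_{\la^{(k)}}(z_k)$ one has $e_{21}^2=0$, and the super-coproduct $\Delta(\mc L_{12}(x))=\mc L_{11}(x)\otimes\mc L_{12}(x)+\mc L_{12}(x)\otimes\mc L_{22}(x)$ supplies the Koszul signs; hence $\mc L_{12}(t_1)\cdots\mc L_{12}(t_l)|0\rangle$ is a ``fermionic'' vector exciting $l$ distinct sites (in particular it vanishes unless $l\le p$). Consequently $S_l(\bm s;\bm t)$ is a sum over matchings and can be written as $\det_{1\le i,j\le l}\big(\Omega(s_i,t_j)\big)$ times an elementary rational prefactor, where the two-site contraction $\Omega(s,t)$ is read off from the $\gl_{1|1}$ R-matrix recorded in Section~\ref{sec gl11 eg conj}. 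Alternatively, this determinant representation can be established by induction on $l$: pushing $\mc L_{21}(s_1)$ through $\mc L_{12}(t_1)\cdots\mc L_{12}(t_l)$ via \eqref{com relations} produces one ``direct'' term together with ``exchange'' terms in which some $\mc L_{11}(t_i)\mc L_{22}(s_1)$ hits $|0\rangle$ and contributes scalars built from $\varphi$ and $\psi$ at shifted arguments, and this recursion matches the Laplace expansion of the determinant.

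\textbf{Step 3: on-shell specialization.} Setting $\bm s=\bm t$, the matrix $\big(\Omega(s_i,t_j)\big)$ has only removable singularities on the diagonal. For $i\ne j$, the Bethe equation $\varphi(t_j)=\psi(t_j)$ (equivalently $y\mid\varphi-\psi$) makes $\Omega(t_i,t_j)$ vanish, so the matrix becomes diagonal; its diagonal entry, computed by L'H\^opital, equals
\[
\frac{d}{dt}\log\frac{\varphi(t)}{\psi(t)}\Big|_{t=t_i}=-\sum_{k=1}^p\frac{a_k+b_k}{(t_i-z_k+a_k)(t_i-z_k-b_k)},
\]
up to the prefactor inherited from $\Omega$ at coinciding arguments. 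This is exactly the Jacobian (Gaudin) matrix of the $\gl_{1|1}$ Bethe ansatz equation \eqref{eq gl11 XXX BAE}, which is diagonal because its $i$-th equation does not involve $t_j$ for $j\ne i$; hence its determinant is, up to sign, $\prod_i\big(\sum_k\frac{a_k+b_k}{(t_i-z_k+a_k)(t_i-z_k-b_k)}\big)$. Assembling the scalar factors produced along the way yields the prefactor $(-1)^{l(l-1)/2}\prod_{i<j}\big(\frac{t_i-t_j-1}{t_i-t_j}\big)^2\prod_i\prod_k\big((t_i-z_k+a_k)(t_i-z_k-b_k)\big)$, and the formula follows.

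\textbf{Main obstacle.} The delicate part is the sign bookkeeping: the Koszul signs in the iterated super-coproduct, the sign acquired when reordering the $l$ odd operators $\mc L_{12}(t_i)$ to form the dual vector, and the sign of the fermionic determinant must all be tracked so as to land on the exponent $l(l-1)/2$. A secondary technical point is to justify the determinant representation of $S_l$ for generic $\bm s,\bm t$ and to take the limit $\bm s\to\bm t$ continuously; this requires knowing that, after clearing the $c_0$-type denominators, $S_l$ is a polynomial regular on the diagonal $\bm s=\bm t$.
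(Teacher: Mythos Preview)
The paper does not supply its own proof of this theorem: the statement is attributed to \cite{PRS} and closed with a $\square$ immediately after the display, with no proof environment following. So there is nothing in the paper to compare your argument against; the authors simply quote the result.

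As for your proposal itself, the strategy is the standard one for norm formulas in the algebraic Bethe ansatz (rewrite the norm as an off-shell scalar product via the contravariant form, obtain a determinant representation, then specialize on-shell), and the $\gl_{1|1}$-specific observation you make---that the Bethe equations \eqref{eq gl11 XXX BAE} do not couple distinct roots, so the Gaudin matrix is diagonal and its determinant is a product---is correct and is exactly why the answer is a product rather than a genuine determinant. That said, what you have written is a plan rather than a proof: in Step~2 you never write down $\Omega(s,t)$ or verify the claimed determinant identity, and in Step~3 you assume features of $\Omega$ (vanishing of off-diagonal entries on-shell, the L'H\^opital value on the diagonal) that only follow once $\Omega$ is known explicitly. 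The sign tracking you flag as the main obstacle is also not carried out. If you want a self-contained argument, the cleanest route in this rank-one case is probably to bypass the general Slavnov machinery and compute directly: expand $\mc L_{12}(t_1)\cdots\mc L_{12}(t_l)|0\rangle$ in the tensor basis using the coproduct (each term excites a distinct $l$-subset of sites because $e_{21}^2=0$), pair against the analogous expansion of the dual vector, and sum. This is a finite combinatorial computation and makes all the signs and the factors $(t_i-z_k+a_k)(t_i-z_k-b_k)$ appear transparently.
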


\begin{thm}\label{thm complete generic}
	Suppose $a+b\ne 0$. For generic $\bm z$, the Bethe ansatz is complete. In other words, there are exactly $2^{p-1}$ solutions $\bm t_i$, $i=1,\dots,2^{p-1}$, to the BAE associated to $\bm s_0$, $\bm\lambda$, $\bm z$, and $l$ such that the corresponding Bethe vectors $\tilde w(\bm t_i,\bm z)$, $i=1,\dots,2^{p-1}$, form a basis of $L(\bla,\bm z)^{\rm sing}$.
\end{thm}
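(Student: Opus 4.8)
Here I would compare two counts: the number of solutions of the Bethe ansatz equation, and $\dim L(\bla,\bm z)^{\rm sing}$, and then upgrade to a statement about bases using the norm formula (Theorem \ref{thm norm BV}) and the orthogonality statement (Proposition \ref{prop BV orthogonal}).

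\emph{Computing $\dim L(\bla,\bm z)^{\rm sing}$.} Restricted to $\gl_{1|1}$, each factor $L_{\la^{(k)}}$ is two-dimensional with weight basis $v_1^{(k)}$, $v_2^{(k)}=e_{21}v_1^{(k)}$, so $L(\bla,\bm z)=\bigoplus_{l=0}^p W_l$, where $W_l$ is the $\gl_{1|1}$-weight subspace of weight $(a-l,b+l)$ and $\dim W_l=\binom pl$. The operator $e_{12}$ maps $W_l$ into $W_{l-1}$; since $e_{12}$ is odd and $e_{12}^2$ kills each factor, $e_{12}^2=0$ on $L(\bla,\bm z)$. Moreover $e_{11}+e_{22}=\{e_{12},e_{21}\}$ is central in $\mathrm U(\gl_{1|1})$ and acts on $L(\bla,\bm z)$ by the scalar $a+b$, which is nonzero by hypothesis; hence $(a+b)^{-1}e_{21}$ is a contracting homotopy, the complex $(W_\bullet,e_{12})$ is exact, and a Pascal-triangle induction starting from $\ker(e_{12}|_{W_p})=0$ gives $\dim\ker(e_{12}|_{W_l})=\binom{p-1}{l}$. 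As $L(\bla,\bm z)^{\rm sing}=\bigoplus_l\ker(e_{12}|_{W_l})$, this yields $\dim L(\bla,\bm z)^{\rm sing}=\sum_{l=0}^{p-1}\binom{p-1}{l}=2^{p-1}$.

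\emph{Counting solutions.} The coefficient of $x^{p-1}$ in $\varphi-\psi$ is $a+b\ne 0$, so $\deg(\varphi-\psi)=p-1$. By Lemma \ref{lemma gl1|1 repro pro} with $\bm s=\bm s_0$ together with the discussion in Section \ref{sec repro pro gl11}, a tuple $\bm t=(t_1,\dots,t_l)$ solves the BAE \eqref{eq gl11 XXX BAE} iff $y(x)=\prod_i(x-t_i)$ is a monic divisor of $\varphi-\psi$, the multiplicity condition of Section \ref{sec super XXX bae} saying exactly that the multiplicity of a value among the $t_i$ does not exceed its multiplicity as a root of $\varphi-\psi$. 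For generic $\bm z\in\C^p$ the following three conditions hold, each being a nonempty Zariski-open condition on $\bm z$: (i) $\varphi$ and $\psi$ are relatively prime (hence $L(\bla,\bm z)$ is irreducible by Theorem \ref{thm fd gl11}); (ii) $\varphi-\psi$ has $p-1$ pairwise distinct simple roots; (iii) no two roots of $\varphi-\psi$ differ by $h=1$. Under (ii) the monic divisors of $\varphi-\psi$ are exactly the $2^{p-1}$ products over subsets of its $p-1$ roots, so the BAE has exactly $2^{p-1}$ solutions $\bm t_1,\dots,\bm t_{2^{p-1}}$, each using only distinct roots; and (i) forces $\varphi(t)\ne0\ne\psi(t)$ for every root $t$ of $\varphi-\psi$, so no remaining denominator in \eqref{eq gl11 XXX BAE} vanishes.

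\emph{Assembling.} Each $\tl w(\bm t_i,\bm z)$ is $\gl_{1|1}$-singular by Proposition \ref{prop BV gl11 singular}, and for $i\ne j$ it is $B_{\bla,\bm z}$-orthogonal to $\tl w(\bm t_j,\bm z)$ by Proposition \ref{prop BV orthogonal} (using $\varphi\ne\psi$). To see $\tl w(\bm t_i,\bm z)\ne0$ I would read off Theorem \ref{thm norm BV}: the first group of factors is nonzero by (iii) and distinctness of the $t_i$, the second by (i), and for the last group one uses $\sum_k\frac{a_k+b_k}{(t-z_k+a_k)(t-z_k-b_k)}=-\dfrac{(\varphi-\psi)'(t)}{\varphi(t)}$, whose right-hand side is nonzero at each root $t$ by (i) and (ii). Thus the $2^{p-1}$ vectors $\tl w(\bm t_i,\bm z)$ are nonzero, pairwise orthogonal, hence linearly independent, and lie in $L(\bla,\bm z)^{\rm sing}$, which has dimension $2^{p-1}$; therefore they form a basis. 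The main obstacle is the genericity claims (ii)--(iii): since $\varphi-\psi$ is a constrained family in $\bm z$ rather than an arbitrary polynomial, one must verify that the relevant discriminant and the resultant $\mathrm{Res}_x\big((\varphi-\psi)(x),(\varphi-\psi)(x-1)\big)$ are not identically zero in $\bm z$, which I would do by specializing to one convenient configuration (e.g.\ taking the roots $z_k-a_k$ of $\varphi$ far apart) or by checking that the coefficient map $\bm z\mapsto(\varphi-\psi)/(a+b)$ is dominant onto monic polynomials of degree $p-1$.
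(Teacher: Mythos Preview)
Your argument is correct and follows the same route as the paper's proof: count monic divisors of $\varphi-\psi$, invoke Proposition~\ref{prop BV gl11 singular} and Theorem~\ref{thm norm BV} for singularity and nonvanishing, and Proposition~\ref{prop BV orthogonal} for linear independence. You simply supply the details the paper omits---the exactness argument for $\dim L(\bla,\bm z)^{\rm sing}=2^{p-1}$, the explicit genericity conditions, and the rewriting of the Jacobian factor as $-(\varphi-\psi)'(t)/\varphi(t)$ (note this identity holds only at roots $t$ of $\varphi-\psi$, which is all you need).
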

\begin{proof}
	Since $a+b\ne0$, we have $\deg(\varphi-\psi)=p-1$. It is not difficult to see that $\dim L(\bla)^{\rm sing}=2^{p-1}$ and for generic $\bm z$ there are exactly $2^{p-1}$ distinct monic divisors of the polynomial $\varphi-\psi$. Each monic divisor of $\varphi-\psi$ corresponds to a solution $\bm t_i$, $i=1,\dots,2^{p-1}$, of BAE associated to $\bm s_0$, $\bm\lambda$, $\bm z$, with possibly different $l$. Due to Proposition \ref{prop BV gl11 singular} and Theorem \ref{thm norm BV}, the Bethe vectors $\tilde w(\bm t_i,\bm z)$ are singular and non-zero. Moreover, it follows from Proposition \ref{prop BV orthogonal} that $\tilde w(\bm t_i,\bm z)$, $i=1,\dots,2^{p-1}$, are linearly independent and hence form a basis of $L(\bla,\bm z)^{\rm sing}$.
\end{proof}

Let $\la^{(k)}=(1,0)$ and $z_k=0$ for all $k=1,\dots,p$. This case is the \emph{homogeneous super} XXX \emph{model}. We obtain the completeness of homogeneous super XXX model.

Let $\theta$ be a primitive $p$-th root of unity. Set $\vartheta_i=1/(\theta^i-1)$, $i=1,\dots,p-1$.

\begin{cor}
	The Bethe ansatz is complete for super homogeneous XXX model. Explicitly, the Bethe vectors form a basis of $\big((\C^{1|1})^{\otimes p}\big)^{\rm sing}$ and the transfer matrix $\cT(x)$ acts on $\big((\C^{1|1}(0))^{\otimes p}\big)^{\rm sing}$ diagonally with simple spectrum. Moreover, the spectrum of $\cT(x)$ acting on $\big((\C^{1|1}(0))^{\otimes p}\big)^{\rm sing}$ is given by
	\[
	\left\{\frac{(x-\vartheta_{i_1}-1)\cdots(x-\vartheta_{i_l}-1)}{(x-\vartheta_{i_1})\cdots(x-\vartheta_{i_l})}\cdot\frac{(x+1)^p-x^p}{x^p},~ 1\lle i_1<i_2<\dots<i_l\lle p-1, l=0,\dots,p-1\right\}.
	\]
\end{cor}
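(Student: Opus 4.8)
The plan is to specialize the general completeness statement, Theorem~\ref{thm complete generic}, to the homogeneous data $\la^{(k)}=(1,0)$, $z_k=0$, $h=1$, and then to read off the eigenvalues from Theorem~\ref{thm gl11 eigenvalue in y}.

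First I would compute the relevant polynomials. With these parameters $a_k=1$ and $b_k=0$, so $\varphi(x)=(x+1)^p$ and $\psi(x)=x^p$; hence $\varphi-\psi=(x+1)^p-x^p$ has degree $p-1$ with leading coefficient $p$. Its roots are precisely the $x\ne 0$ with $(1+1/x)^p=1$, namely $x=\vartheta_i=1/(\theta^i-1)$, $i=1,\dots,p-1$, and since $\theta$ is primitive these $p-1$ numbers are pairwise distinct; thus $\varphi-\psi=p\prod_{i=1}^{p-1}(x-\vartheta_i)$. In particular $\varphi$ and $\psi$ are coprime, so $L(\bla,\bm z)=(\C^{1|1}(0))^{\otimes p}$ is irreducible by Theorem~\ref{thm fd gl11}, and as a $\gl_{1|1}$-module it is $(\C^{1|1})^{\otimes p}$, whose singular subspace has dimension $2^{p-1}$.

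Next I would check that, although $\bm z=(0,\dots,0)$ is not $h$-generic, the homogeneous data meets every condition used in the proof of Theorem~\ref{thm complete generic}. Indeed $\varphi-\psi$ is squarefree, so it has $2^{p-1}$ distinct monic divisors, one for each subset of $\{\vartheta_1,\dots,\vartheta_{p-1}\}$ (including $y=1$ for the empty subset); by the discussion of Section~\ref{sec repro pro gl11} each such $y$ represents a solution of the BAE, the associated Bethe vector $\tilde w(\bm t,\bm z)$ is $\gl_{1|1}$-singular by Proposition~\ref{prop BV gl11 singular}, these Bethe vectors are pairwise orthogonal with respect to $B_{\bla,\bm z}$ by Proposition~\ref{prop BV orthogonal} (as $\varphi\ne\psi$), and each has nonzero Shapovalov norm by the explicit formula of Theorem~\ref{thm norm BV}. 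The nonvanishing of the norm reduces to checking that every factor of that formula is nonzero, which in turn comes down to the elementary facts $\vartheta_i\notin\{0,-1\}$ and $\vartheta_i-\vartheta_j\ne 1$ for all $i,j$; the latter I would verify directly, since $1/(\theta^i-1)-1/(\theta^j-1)=1$ forces $\theta^j(2-\theta^i)=1$, hence $|2-\theta^i|=1$, hence $\mathrm{Re}(\theta^i)=1$ and $\theta^i=1$, a contradiction. Orthogonality together with nonvanishing makes these $2^{p-1}$ Bethe vectors linearly independent, and counting dimensions they form a basis of $\big((\C^{1|1})^{\otimes p}\big)^{\rm sing}$.

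Finally, since $\cT(x)$ commutes with $\mathrm U(\gl_{1|1})$ it preserves $\big((\C^{1|1}(0))^{\otimes p}\big)^{\rm sing}$, and Theorem~\ref{thm gl11 eigenvalue in y} gives, for the divisor $y=\prod_{j=1}^l(x-\vartheta_{i_j})$, the eigenvalue $\frac{y[1]}{y}(\varphi-\psi)\prod_k(x-z_k)^{-1}=\frac{(x-\vartheta_{i_1}-1)\cdots(x-\vartheta_{i_l}-1)}{(x-\vartheta_{i_1})\cdots(x-\vartheta_{i_l})}\cdot\frac{(x+1)^p-x^p}{x^p}$, which is exactly the claimed list. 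To conclude that the spectrum is simple it remains to see that distinct divisors give distinct eigenvalues; after cancelling the common nonzero factor $((x+1)^p-x^p)/x^p$ this is the injectivity of $y\mapsto y[1]/y$ on monic divisors of $\varphi-\psi$, which follows from a short gcd computation using once more that no two roots $\vartheta_i,\vartheta_j$ differ by $1$, so that $\gcd(y(x),y(x-1))=1$ for every such $y$. The only nontrivial input in the whole argument — and hence the step I expect to be the main obstacle — is this arithmetic fact that no two roots of $(x+1)^p-x^p$ differ by $1$; it underlies both the nonvanishing of the Bethe vectors and the simplicity of the spectrum, while everything else is specialization and bookkeeping.
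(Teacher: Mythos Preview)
Your argument is correct and follows the same route as the paper's proof: compute $\varphi-\psi=p\prod_{i=1}^{p-1}(x-\vartheta_i)$, check the elementary facts $\vartheta_i-\vartheta_j\ne 0,1$ and $\vartheta_i\notin\{0,-1\}$, and then rerun the proof of Theorem~\ref{thm complete generic} verbatim. You are simply more explicit than the paper, which records those arithmetic facts as ``easy to see'' and does not spell out the simple-spectrum step; your verification of $\vartheta_i-\vartheta_j\ne 1$ via $|2-\theta^i|=1\Rightarrow\theta^i=1$ and your pole argument for the injectivity of $y\mapsto y[1]/y$ are exactly the details the paper omits.
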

\begin{proof}
	Note that $\varphi(x)=(x+1)^p$ and $\psi(x)=x^p$. Clearly, we have $\varphi-\psi=p(x-\vartheta_1)\cdots(x-\vartheta_{p-1})$. It is easy to see that $\vartheta_i-\vartheta_j\ne 0,1$ for $i\ne j$ and $\vartheta_i\notin \Z$. Therefore we have exactly $2^{p-1}$ distinct monic divisors 
	\[
	(x-\vartheta_{i_1})\cdots(x-\vartheta_{i_l}),~ 1\lle i_1<i_2<\dots<i_l\lle p-1,~l=0,\dots,p-1,
	\]of the polynomial $\varphi-\psi$ and hence $2^{p-1}$ different solutions $\bm t_i$, $i=1,\dots,2^{p-1}$, of BAE. Therefore, as in Theorem \ref{thm complete generic}, the Bethe wectors $\tilde w(\bm t_i,\bm z)$, $i=1,\dots,2^{p-1}$, form a basis of $\big((\C^{1|1}(0))^{\otimes p}\big)^{\rm sing}$.
\end{proof}

\end{document}